\def\Z{{\bf Z}}
\def\P{{\bf P}}
\def\PP{{\bf P}}
\def\cI{\mathscr{I}}
\def\cA{\mathscr{A}}
\def\cF{\mathscr{F}}
\def\cO{\mathscr{O}}
\def\cE{\mathscr{E}}
\def\cC{\mathscr{C}}
\def\cU{\mathscr{U}}
\def\cV{\mathscr{V}}
\def\cX{X^5}
\def\Xf{X^4}
\def\bQ{\mathbf{Q}}
\def\k{\mathbf k}
\def\bq{\mathbf q}
\def\lra{\longrightarrow}
\def\llra{\hbox to 10mm{\rightarrowfill}}
\def\lllra{\hbox to 15mm{\rightarrowfill}}
\def\llla{\hbox to 10mm{\leftarrowfill}}
\def\lllla{\hbox to 15mm{\leftarrowfill}}
\def\isom{\simeq}
\def\emptyset{\varnothing}
\DeclareMathOperator{\isomlra}{\stackrel{{}_{\scriptstyle\sim}}{\lra}}
\DeclareMathOperator{\isomto}{\isomlra}
\DeclareMathOperator{\codim}{codim}
\DeclareMathOperator{\Coker}{Coker}
\DeclareMathOperator{\GL}{GL}
\DeclareMathOperator{\Gr}{\mathsf{Gr}}
\DeclareMathOperator{\LGr}{\mathsf{SGr}}
\DeclareMathOperator{\Fl}{\mathsf{Fl}}
\DeclareMathOperator{\LFl}{\mathsf{SFl}}
\DeclareMathOperator{\Ker}{Ker}
\DeclareMathOperator{\Pf}{Pf}
\DeclareMathOperator{\PGL}{PGL}
\DeclareMathOperator{\Pic}{Pic}
\DeclareMathOperator{\pr}{\mathsf{pr}}
\DeclareMathOperator{\Sp}{Sp}
\DeclareMathOperator{\rank}{rank}
\DeclareMathOperator{\Sym}{\mathsf S}
\DeclareMathOperator{\SL}{SL}
\def\bw#1#2{\textstyle{\bigwedge\hskip-0.9mm^{#1}}\hskip0.2mm{#2}}
\def\bww#1#2#3{\textstyle{\bigwedge\hskip-0.9mm^{#1}_{#2}}\hskip0.2mm{#3}}
\newtheorem{lemma}{Lemma}[section]
\newtheorem{theorem}[lemma]{Theorem}
\newtheorem{corollary}[lemma]{Corollary}
\newtheorem{proposition}[lemma]{Proposition}
\theoremstyle{remark}
\newtheorem{remark}[lemma]{Remark}
\def\BU{{\overline{U}}}
\def\BW{{\overline{W}}}
\def\one{{\mathbf{1}}}
\def\setminus{\smallsetminus}
\def\cong{\isom}
\newcommand{\conv}{\,\lrcorner\,}
\newcommand{\Gtwo}{{\mathbb{G}_2}}
\newcommand{\blam}{{\bar{\lambda}}}
\newcommand{\hlam}{{\hat{\lambda}}}
\newcommand{\hxi}{{\hat{\xi}}}
\newcommand{\tcX}{\widetilde{X}^5}
\newcommand{\bcX}{\overline{X}^5}
\newcommand{\tX}{\widetilde{X}^4}
\newcommand{\bX}{\overline{X}^4}
\newcommand{\bpi}{{\bar\pi}}
\newcommand{\bi}{{\bar{\imath}}}
\newcommand{\bp}{{\bar{p}}}
\newcommand{\be}{{\bar{e}}}
\newcommand{\bh}{{\bar{h}}}
\newcommand{\BD}{{\overline{D}}}
\newcommand{\BE}{{\overline{E}}}
\newcommand{\BC}{{\overline{C}}}
\newcommand{\bGamma}{{\overline{\Gamma}}}
\newcommand{\sv}{{\mathsf{v}}}
\DeclareMathOperator{\rk}{{\mathrm{rank}}}
\DeclareMathOperator{\Bl}{{\mathrm{BL}}}
\DeclareMathOperator{\CH}{{\mathrm{CH}}}
\DeclareMathOperator{\Mot}{{\mathbb{M}}}
\DeclareMathOperator{\Lef}{{\mathbb{L}}}
\newcommand{\bcU}{{\overline{\cU}}}
\newcommand{\lm}{{\lambda,\mu}}
\newcommand{\lmn}{{\lambda,\mu,\nu}}
\begin{document}

\title{K\"uchle fivefolds of type c5}

 \author[A. Kuznetsov]{Alexander Kuznetsov}
 \address{Algebra Section, Steklov Mathematical Institute,
  8 Gubkin str., Moscow 119991 Russia}
 \email{{\tt  akuznet@mi.ras.ru}}
 
\thanks{This work is supported by the Russian Science Foundation under grant 14-50-00005.}

\begin{abstract}
We show that K\"uchle fivefolds of type $(c5)$ --- subvarieties of the Grassmannian $\Gr(3,7)$ parameterizing 3-subspaces that are isotropic for a given 2-form and are annihilated by a given 4-form --- 
are birational to hyperplane sections of the Lagrangian Grassmannian $\LGr(3,6)$ and describe in detail these birational transformations.
As an application, we show that the integral Chow motive of a K\"uchle fivefold of type $(c5)$ is of Lefschetz type.
We also discuss K\"uchle fourfolds of type $(c5)$ --- hyperplane sections of the corresponding K\"uchle fivefolds ---
an interesting class of Fano fourfolds, which is expected to be similar to the class of cubic fourfolds in many aspects. 
\end{abstract}

\maketitle


\section{Introduction}

One of the most interesting classical questions of birational geometry is the question of rationality of cubic fourfolds.
In the Italian school of algebraic geometry it was believed \cite{morin1940} that a general cubic fourfold is rational, but the argument had a gap. 
Now some families of rational cubic fourfolds are known \cite{hassett2000special} (such as Pfaffian cubics, and some cubics containing a plane),
but it is generally believed that a very general cubic fourfold is irrational. 
However, in spite of many attempts, proving irrationality of a single cubic fourfold remains out of reach.

One of the points of view on (ir)rationality of cubic fourfolds is via the structure of their derived categories (see~\cite{kuznetsov2015rationality}). 
It is known that the derived category of coherent sheaves on a cubic fourfold~$X$ has a semiorthogonal decomposition consisting of three exceptional objects and an additional
subcategory $\cA_X$, whose properties resemble very much those of the derived category of a K3 surface (such categories are usually called noncommutative K3 surfaces). 
It is conjectured (see~\cite{kuznetsov2010cubic,kuznetsov2015rationality}) that $X$ is rational if and only if $\cA_X$ is equivalent to the derived category of a (commutative) K3 surface.
This conjecture is consistent with all known examples of rational cubic fourfolds --- for those $\cA_X$ is equivalent 
to the derived category of a K3 surface, while for a very general $X$ it is easy to show that $\cA_X$ is not equivalent 
to the derived category of any K3 surface. 

While it is not clear how the above conjecture could be proved, it is quite interesting to investigate other families of fourfolds 
which have similar properties (i.e.\ whose derived categories contain noncommutative K3 surfaces as semiorthogonal components). 
One of such families, Gushel--Mukai fourfolds was investigated in \cite{debarre2015gushel} and  \cite{kuznetsov2016gushel}. 
This paper makes a first step to investigation of yet another family of fourfolds with similar properties.



In 1995 Oliver K\"uchle classified in~\cite{kuchle1995fano} all Fano fourfolds of index 1 that can be obtained as zero loci of regular global sections 
of equivariant vector bundles on Grassmannians. The list of such fourfolds includes 20 families (in fact, originally there were 
21 families in the list, but two of them were recently shown to be equivalent \cite{manivel2015}), and three of them --- types $(c7)$, $(d3)$, and $(c5)$ in K\"uchle's notation --- 
judging by their Hodge numbers, might contain a noncommutative K3 surface as a component of their derived category.
The first two types were considered in~\cite{kuznetsov2015kuchle}, and were shown not to produce an interesting example.
A fourfold of type $(d3)$ was shown to be isomorphic to the blowup of $(\P^1)^4$ with center in a K3 surface, 
and a fourfold of type $(c7)$ to the blowup of a cubic fourfold with center in a Veronese surface. 
However, we expect that the last of the three examples --- a fourfold of type $(c5)$ --- is new and interesting.

By definition such fourfolds can be constructed as follows. Consider the Grassmannian $\Gr(3,7)$ of 3-dimensional
vector subspaces in a 7-dimensional vector space. Let $\cU_3$ and $\cU_3^\perp$ be the tautological vector subbundles 
on the Grassmannian, of ranks 3 and 4 respectively. Consider the following rank 8 vector bundle 
\begin{equation*}
\cU_3^\perp(1) \oplus \cU_3(1) \oplus \cO(1).
\end{equation*}
Its global section is given by a triple $(\lambda,\mu,\nu)$, where $\lambda$ is a 4-form, $\mu$ is a 2-form, and $\nu$ is a 3-form
on the 7-dimensional vector space. The zero locus of such a section (provided it is sufficiently general) is a smooth Fano fourfold
\begin{equation*}
X = X^4_{\lambda,\mu,\nu} \subset \Gr(3,7).
\end{equation*}
Its numerical invariants, computed by K\"uchle, are 
\begin{equation*}
K_X^4 = 66,
\qquad
h^0(X,\cO(-K_X)) = 20,
\end{equation*}
and its Hodge diamond looks as
\begin{equation*}
\begin{smallmatrix}
&&&& 1 \\
&&& 0 && 0 \\
&& 0 && 1 && 0 \\
& 0 && 0 && 0 && 0 \\
0 && 1 && 24 && 1 && 0 \\
& 0 && 0 && 0 && 0 \\
&& 0 && 1 && 0 \\
&&& 0 && 0 \\
&&&& 1 
\end{smallmatrix}
\end{equation*}
In particular, the Hodge diamond of a K3 surface is clearly seen in its center, so one can expect to find a noncommutative K3 category as a component of its derived category.
Of course, to prove something of this sort, we need to understand the geometry of this variety better. The goal of this paper is to do some steps in this direction.

Our approach is based on the following funny common feature of the three examples of Fano fourfolds, which have (or might have) a noncommutative K3 surface.
In fact, all of them are half-anticanonical sections of nice Fano fivefolds. The corresponding fivefolds are $\P^5$, a hyperplane section of $\Gr(2,5)$, 
or the zero locus of the section $(\lambda,\mu)$ of the vector bundle $\cU_3^\perp(1) \oplus \cU_3(1)$ on $\Gr(3,7)$, which we denote by $\cX_{\lambda,\mu}$
(and call {\sf K\"uchle fivefolds of type $(c5)$}).
\begin{equation*}
\begin{array}{|l|c|c|c|}
\hline
\text{Fano fourfold} & \text{cubic fourfold} & \text{Gushel--Mukai fourfold} & \text{K\"uchle fourfold $X_{\lambda,\mu,\nu}$} \\
\hline 
\text{Fano fivefold} & \P^5 & \Gr(2,5) \cap H & \cX_{\lambda,\mu} \\
\hline
\end{array}
\end{equation*}
The structure of the derived category of the fourfolds (at least of cubic and Gushel--Mukai fourfolds) is determined by the structure of the derived category of the corresponding fivefolds. 
Both in case of $\P^5$ and $\Gr(2,5) \cap H$, the derived category has a rectangular Lefschetz decomposition with respect to (a fraction of) the half-anticanonical line bundle (see~\cite{kuznetsov2014icm}). 
From this an existence of a noncommutative K3 category in a fourfold follows by \cite{kuznetsov2015calabi}. 
By the way, two ``non-interesting'' examples $(d3)$ and $(c7)$ also share this feature --- the corresponding fivefolds are $(\P^1)^5$ and the blowup of $\P^5$ 
with center in the Veronese surface both have a rectangular Lefschetz decomposition, see~\cite{kuznetsov2015kuchle}.

It is natural to expect that the same is true for K\"uchle fivefolds of type $(c5)$.
An attempt to construct a rectangular Lefschetz decomposition of $\cX_\lm$ was the main motivation for this paper.
Although we have not succeeded in this yet, a geometrical construction we have found, allowed us to show that the Chow motive (with integral coefficients) of a general K\"uchle fivefold is of Lefschetz type. 
This can be considered as an approximation to the derived category result we are up to.



Let us explain this geometrical construction.
Recall that by definition, $\cX_\lm$ is a subvariety in the Grassmannian $\Gr(3,7)$ defined as the zero locus of a section $(\lm)$ 
of the vector bundle $\cU_3^\perp(1) \oplus \cU_3(1)$ given by a 4-form $\lambda$ and a 2-form~$\mu$.
We associate with it a certain hyperplane section $\LGr(3,6) \cap H$ of the Lagrangian Grassmannian $\LGr(3,6)$ 
and a codimension 2 subvariety $Z \subset \LGr(3,6) \cap H$, that is isomorphic to a scroll (a $\P^1$-bundle) over a sextic del Pezzo surface.
Then we show that the blowup $\tcX_\lm$ of $\LGr(3,6) \cap H$ with center in $Z$ can be also realized as a blowup of~$\cX_\lm$.
Moreover, we show that the center $F$ of the blowup $\tcX_\lm \to \cX_\lm$ is isomorphic to the flag variety $\Fl(1,2;3)$.
In other words, we have a simple birational transformation between $\cX_\lm$ and a hyperplane section of the Lagrangian Grassmannian that can be expressed by a diagram
\begin{equation*}
\xymatrix{
&& \tcX_\lm \ar[dl] \ar[dr] \\
F \ar@{^{(}->}[r] & \cX_\lm && \LGr(3,6) \cap H & Z \ar@{_{(}->}[l] 
}
\end{equation*}
%
%
%
%
Everything in this construction can be described quite explicitly, see section~\ref{section:5folds} for more details.
We believe this description should be essential for understanding the geometry of K\"uchle fivefolds $\cX_\lm$ and their hyperplane sections $\Xf_\lmn$.
We demonstrate its usefulness by applying it to the computation of the Chow motive of $\cX_\lm$ in section~\ref{section:apps}.

The paper is organized as follows.
In section~\ref{section:preliminaries} we introduce some notation and prove a very basic, but rather useful result (a blowup lemma)
allowing in some cases to identify a subscheme in a projective bundle as a blowup of its base.
In section~\ref{section:forms-and-generality} we discuss the geometry of 3-forms on a 6-space and of 4-forms on a 7-space.
After that we introduce explicit generality assumptions on a pair $(\lm)$ of a 4-form and a 2-form on a 7-space, under which we work later.
We explain the standard form, in which such a pair can be written, and introduce some useful geometric constructions related to this data.
Section~\ref{section:5folds} is the main part of the paper. 
Here we construct the birational transformation between a K\"uchle 5-fold $\cX_\lm$ and its associated hyperplane section of the Lagrangian Grassmannian $\LGr(3,6)$, and discuss the details of its geometry.
In section~\ref{section:apps} we give two applications of this description. 
First, we show that a general K\"uchle fourfold $\Xf_\lmn \subset \cX_\lm$ is birational to a singular (along a curve) quadratic section of the hyperplane section of the Lagrangian Grassmannian.
Second, we show that the integral Chow motive of a K\"uchle fivefold is of Lefschetz type.
Finally, in section~\ref{section:hyperplane-sgr} we prove that the integral Chow motive of any smooth hyperplane section of the Lagrangian Grassmannian $\LGr(3,6)$ is of Lefschetz type.
For this we introduce a new geometric construction --- we identify a certain $\P^2$-bundle over such a hyperplane section 
with the blowup of the isotropic Grassmannian $\LGr(2,6)$ with center in the adjoint variety of the simple algebraic group of type $\Gtwo$.

\subsection*{Acknowledgements:}
I am very grateful to Atanas Iliev, Grzegorz and Micha\l{}  Kapustka, Laurent Manivel, Dmitri Orlov, and Kristian Ranestad for useful discussions.



\section{Preliminaries}\label{section:preliminaries}

\subsection{Notations and conventions}\label{subsection:notations}

We work over an algebraically closed field $\k$ of characteristic~0.
For any vector space $V$ we denote by $\wedge$ the wedge product of skew forms and polyvectors and by $\conv$ the convolution operation
\begin{equation*}
\bw{p}V \otimes \bw{q}V^\vee \xrightarrow{\ \conv\ } \bw{p-q}V
\qquad\qquad
\text{(if $p \ge q$),}
\end{equation*}
induced by the natural pairing $V \otimes V^\vee \to \k$. 

If $p = n = \dim V$ and $0 \ne \epsilon \in \det(V)$, the convolution with $\epsilon$ gives an isomorphism
\begin{equation}\label{eq:isomorphism-convolution}
\bw{q}V^\vee \isomto \bw{n - q}V,
\qquad 
\xi \mapsto \xi^\vee := \epsilon \conv \xi.
\end{equation}
This isomorphism is canonical up to rescaling (since $\epsilon$ is unique up to rescaling). Note that
\begin{equation}\label{eq:convolution-duality}
\omega \conv \xi = (\xi^\vee) \conv (\omega^\vee),
\end{equation} 
where $\omega \in \bw{k}V$, $\xi \in \bw{p}V^\vee$ and $k \ge p$, and we use $\epsilon^{-1} \in \det(V^\vee)$ to define $\omega^{-1}$.

We say that a $p$-form $\xi$ {\sf annihilates} a $k$-subspace $U \subset V$, if $k \le p$ and $\xi \conv (\bw{k} U) = 0$.
Analogously, we say that $U \subset V$ is {\sf isotropic} for a $p$-form $\xi$ if $p \le k$ and $(\bw{k} U) \conv \xi = 0$.
By~\eqref{eq:convolution-duality} a subspace $U \subset V$ is isotropic for $\xi$ if and only if $\xi^\vee$ annihilates $U^\perp := \Ker(V^\vee \to U^\vee)$.

For a vector space $V$ we denote by $\P(V)$ the projective space of one-dimensional subspaces in $V$ and by $\cO(1)$ the very ample generator of its Picard group, so that $H^0(\P(V),\cO(1)) = V^\vee$.
Abusing the notation, we frequently consider nonzero vectors $v \in V$ as points of $\P(V)$ and vice versa. 
In case we want to emphasize a difference, we denote the point, corresponding to a vector $v \in V$ by $[v] \in \P(V)$, and the corresponding one-dimensional subspace by $\k v \subset V$.

Analogously, for a vector bundle $\cV$ on a scheme $S$ we denote by $\pr:\P_S(\cV) \to S$ the projective bundle, parameterizing one-dimensional subspaces in the fibers of $\cV$, 
and by $\cO(1)$ the ample generator of the relative Picard group such that $\pr_*\cO(1) \cong \cV^\vee$.
Sometimes, we refer to the divisor class of this line bundle as the {\sf relative hyperplane class}.
Note that although the projectivization of a vector bundle does not change if the vector bundle get twisted, the corresponding relative hyperplane class does.

We always denote by $W$ a vector space of dimension~7, and by $\BW$ a vector space of dimension 6.
In fact, further on the space $\BW$ will be a direct summand of~$W$, but for a moment this is irrelevant. 
We use notation $e_0,e_1,\dots,e_6$ for a basis in $W$ and $e_1,\dots,e_6$ for a basis in~$\BW$. 
The dual bases in $W^\vee$ and $\BW^\vee$ are denoted by $x_0,x_1,\dots,x_6$ and $x_1,\dots,x_6$ respectively.
We usually abbreviate $x_{i_1} \wedge \dots \wedge x_{i_p}$ to $x_{i_1\dots i_p}$ and $e_{i_1} \wedge \dots \wedge e_{i_p}$ to $e_{i_1\dots i_p}$. 

We denote by $\Gr(k,W)$ the Grassmannian of $k$-dimensional vector subspaces in $W$. The tautological vector subbundle of rank $k$ on it
is denoted by $\cU_k \subset W \otimes \cO_{\Gr(k,W)}$. The quotient bundle is denoted simply by $W/\cU_k$, and for its dual we use the notation
\begin{equation*}
\cU_k^\perp := (W/\cU_k)^\vee.
\end{equation*}
Analogously, we denote by $\bcU_k$ the tautological subbundle on $\Gr(k,\BW)$, by $\BW/\bcU_k$ the quotient bundle, and by $\bcU_k^\perp$ its dual.
The point of the Grassmannian corresponding to a subspace $U_k \subset W$ is denoted by $[U_k]$, or even just $U_k$.
We recall that $\det \cU_k \cong \det \cU_k^\perp \cong \cO(-1)$ is the antiample generator of $\Pic(\Gr(k,W))$, and analogously for $\Gr(k,\BW)$.

For a vector bundle $\cV$ on a scheme $S$ we denote by $\Gr_S(k,\cV)$ the relative Grassmannian, parameterizing $k$-dimensional subspaces in the fibers of $\cV$.
In particular, we consider the two-step flag variety
\begin{equation*}
\Fl(k_1,k_2;V) \cong \Gr_{\Gr(k_2,V)}(k_1,\cU_{k_2}) \cong \Gr_{\Gr(k_1,V)}(k_2-k_1,V/\cU_{k_1}).
\end{equation*}
We denote by $\cU_{k_1} \hookrightarrow \cU_{k_2} \hookrightarrow V \otimes \cO$ the tautological flag of subbundles on $\Fl(k_1,k_2;V)$.
In particular, we abuse the notation by using the same name for the tautological vector bundle on the Grassmannian and its pullback to the flag variety.


Given a morphism $\varphi:\cE \to \cF$ of vector bundles on a scheme $S$ we denote by $D_k(\varphi) \subset S$ its $k$-th degeneration scheme,
i.e.\ the subscheme of $S$ whose ideal is locally generated by all $(r+1-k)\times (r+1-k)$ minors of the matrix of $\varphi$ for $r = \min\{\rank(\cE),\rank(\cF)\}$.
This is a closed subscheme in $S$, and $D_{k+1}(\varphi) \subset D_k(\varphi)$.

\subsection{A blowup Lemma}\label{subsection:blowup-lemma}

We will use several times the following observation, which is quite classical. Unfortunately, we were not able to find a reference for it, so we sketch a short proof.

\begin{lemma}\label{lemma:blowup}
Let $\varphi:\cE \to \cF$ be a morphism of vector bundles of ranks $\rk(\cF) = r$ and $\rk(\cE) = r+1$ on a Cohen--Macaulay scheme $S$.
Denote by $D_k(\varphi)$ the $k$-th degeneracy locus of $\varphi$.
Consider the projectivization $p:\P_S(\cE) \to S$, then $\varphi$ gives a global section of the vector bundle $p^*\cF \otimes \cO(1)$.
If $\codim D_k(\varphi) \ge k + 1$ for all $k \ge 1$ then the zero locus of $\varphi$ on $\P_S(\cE)$ is isomorphic to the blowup of $S$ with center in the degeneration locus $D_1(\varphi)$.

Moreover, in this case the line bundle corresponding to the exceptional divisor of the blowup is isomorphic to $\det(\cE^\vee) \otimes \det(\cF) \otimes \cO(-1)$.
Finally, if the second degeneracy locus~$D_2(\varphi)$ is empty, then the exceptional divisor is isomorphic 
to the projectivization of the vector bundle $\Ker(\varphi\vert_{D_1(\varphi)}:\cE\vert_{D_1(\varphi)} \to \cF\vert_{D_1(\varphi)})$.
\end{lemma}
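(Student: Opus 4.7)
My plan is to prove the lemma in three stages: a dimension count placing $Z$ into the expected codimension, the main identification of $Z$ with the blowup via Hilbert--Burch, and a direct computation of the exceptional divisor.

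\textbf{Stage 1 (setup and dimensions).} First I would unwind the construction: the tautological inclusion $\cO(-1) \hookrightarrow p^*\cE$ composed with $p^*\varphi$ gives the required section of $p^*\cF \otimes \cO(1)$, whose zero locus $Z$ is set-theoretically $\{([v], s) : v \in \Ker \varphi_s\}$. Hence $\pi := p\vert_Z : Z \to S$ is an isomorphism over $S \setminus D_1(\varphi)$, while over a point of $D_k(\varphi) \setminus D_{k+1}(\varphi)$ the fibre is $\P(\Ker \varphi_s) \cong \P^k$. The codimension hypothesis then gives $\dim \pi^{-1}(D_k \setminus D_{k+1}) \le \dim S - 1$ for every $k \ge 1$, so $Z$ has pure dimension $\dim S$ --- exactly the expected codimension $r$ in $\P_S(\cE)$. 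Since $\P_S(\cE)$ is Cohen--Macaulay, the section defining $Z$ is thereby regular and its Koszul complex resolves $\cO_Z$, so $Z$ itself is Cohen--Macaulay.

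\textbf{Stage 2 (the main obstacle: identification with the blowup).} I expect the hardest step to be the scheme-theoretic identification of $Z$ with $\Bl_{D_1(\varphi)}(S)$. Trivialising $\cE$ and $\cF$ locally so that $\varphi$ becomes an $r \times (r+1)$ matrix $\Phi$, the ideal $\cI_{D_1(\varphi)}$ is generated by the signed maximal minors $M_0, \ldots, M_r$ of $\Phi$. The hypothesis $\codim D_k \ge k+1$ is precisely what is needed to invoke the Hilbert--Burch theorem: $\cI_{D_1(\varphi)}$ is then a perfect ideal of grade $2$ with minimal presentation matrix $\Phi^T$, and the Laplace identity $(M_0, \ldots, M_r)\Phi^T = 0$ gives all its syzygies. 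Consequently the rational map $S \dashrightarrow \P_S(\cE)$, $s \mapsto [M_0(s) : \cdots : M_r(s)]$, extends to a closed embedding $\Bl_{D_1(\varphi)}(S) = \mathrm{Proj}_S \bigoplus_n \cI^n_{D_1(\varphi)} \hookrightarrow \P_S(\cE)$ whose image lies inside $Z$ by that same identity. Both $Z$ and $\Bl_{D_1(\varphi)}(S)$ are Cohen--Macaulay, pure of dimension $\dim S$, and coincide over the dense open $S \setminus D_1(\varphi)$; this scheme-theoretic containment between closed subschemes of $\P_S(\cE)$ is therefore forced to be an equality.

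\textbf{Stage 3 (the exceptional divisor and its fibres).} With the blowup identification in hand, note that on $Z$ the composition $\cO(-1) \hookrightarrow p^*\cE \xrightarrow{p^*\varphi} p^*\cF$ vanishes, so $p^*\varphi$ descends to a map $\tilde\varphi : p^*\cE/\cO(-1) \to p^*\cF$ between rank--$r$ bundles. Its determinant is a global section of
\[
\det(p^*\cF) \otimes \det(p^*\cE/\cO(-1))^\vee = p^*(\det\cE^\vee \otimes \det\cF) \otimes \cO(-1),
\]
vanishing exactly where $\tilde\varphi$ is not an isomorphism, i.e.\ on the exceptional divisor $E = \pi^{-1}(D_1(\varphi))$. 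A short local computation in the model case $r = 1$, $\varphi = (a,b)$ on $\mathbf{A}^2$ --- sufficient since the multiplicity is determined at the generic point of $E$ --- confirms that this section vanishes to order one, yielding the asserted isomorphism $\cO_Z(E) \cong \det\cE^\vee \otimes \det\cF \otimes \cO(-1)$. For the final assertion, if $D_2(\varphi) = \varnothing$ then $\varphi\vert_{D_1(\varphi)}$ has constant rank $r-1$ and $\cK := \Ker(\varphi\vert_{D_1(\varphi)})$ is a rank-$2$ subbundle of $\cE\vert_{D_1(\varphi)}$; bringing $\Phi\vert_{D_1(\varphi)}$ into a normal form with the first two columns zero and the remaining $(r-1) \times (r-1)$ block invertible, one reads off that the $r$ equations $\Phi y = 0$ cut out $\P_{D_1(\varphi)}(\cK) \subset \P_{D_1(\varphi)}(\cE\vert_{D_1(\varphi)})$ reducedly, completing the proof.
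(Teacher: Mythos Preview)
Your proof is correct but follows a genuinely different route from the paper's.

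The paper argues by computing the pushforward $p_*(\cO_{\tilde S}(1))$ directly. After the same dimension count as your Stage~1, it twists the Koszul resolution of $\cO_{\tilde S}$ by $\cO(1)$ and pushes forward to obtain $p_*\cO_{\tilde S}(1)\cong\Coker(\varphi^\vee\colon\cF^\vee\to\cE^\vee)$. Separately, an $\operatorname{\mathscr{E}\!\mathit{xt}}$--argument using the Cohen--Macaulay hypothesis shows this cokernel is torsion free, and the map $\wedge^r\varphi$ then identifies it with $\cI_{D_1(\varphi)}\otimes\det\cE^\vee\otimes\det\cF$. Both the blowup description and the exceptional divisor formula fall out simultaneously from the characterisation of a blowup by its relatively ample line bundle.

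Your approach instead invokes Hilbert--Burch to present $\cI_{D_1(\varphi)}$ by the matrix $\Phi^T$, embeds $\Bl_{D_1(\varphi)}(S)$ into $\P_S(\cE)$ via the maximal minors, lands inside $Z$ by the Laplace identity, and forces equality by a dimension comparison. The exceptional divisor is handled in a separate step via $\det\tilde\varphi$. This is more explicit and makes the determinantal structure transparent; the paper's argument is more uniform and extracts the exceptional divisor for free. Two minor comments: Hilbert--Burch itself only requires $\codim D_1(\varphi)\ge 2$ --- the full hypothesis on all $D_k$ is used only in your Stage~1 dimension count; and in Stage~2 the ``both CM, agree on a dense open'' step is really the statement that $Z$, having no embedded components and no component lying over $D_1(\varphi)$, equals the scheme-theoretic closure of its restriction to $p^{-1}(S\setminus D_1(\varphi))$ --- Cohen--Macaulayness of the blowup itself is not needed there.
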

\begin{proof}
Consider the morphism $\varphi^\vee:\cF^\vee \to \cE^\vee$. 
By assumption, it is generically injective, hence is a monomorphism of sheaves.
Let $\cC := \Coker(\varphi^\vee)$ be its cokernel. 
Let us show it is torsion free.
Indeed, dualizing the sequence $0 \to \cF^\vee \to \cE^\vee \to \cC \to 0$ we obtain
\begin{equation*}
0 \to \cC^\vee \to \cE \xrightarrow{\ \varphi\ } \cF \to \operatorname{\mathscr{E}\!\mathit{xt}}^1(\cC,\cO_S) \to 0.
\end{equation*}
The sheaf $\operatorname{\mathscr{E}\!\mathit{xt}}^1(\cC,\cO_S)$ is supported on the degeneracy locus $D_1(\varphi)$, hence in codimension 2.
Therefore, by Cohen--Macaulay property, 
we have $\operatorname{\mathscr{E}\!\mathit{xt}}^{\le 1}(\operatorname{\mathscr{E}\!\mathit{xt}}^1(\cC,\cO_S),\cO_S) = 0$, 
so dualizing the above sequence we get an exact sequence
\begin{equation*}
0 \to \cF^\vee \xrightarrow{\ \varphi^\vee\ } \cE^\vee \to \cC^{\vee\vee} \to \operatorname{\mathscr{E}\!\mathit{xt}}^{2}(\operatorname{\mathscr{E}\!\mathit{xt}}^1(\cC,\cO_S),\cO_S).
\end{equation*}
Comparing it with the definition of the sheaf $\cC$, we conclude that $\cC$ embeds into the torsion free sheaf~$\cC^{\vee\vee}$, hence is itself torsion free.

Next, consider the morphism
\begin{equation*}
\cE^\vee \cong \det(\cE^\vee) \otimes \bw{r}\cE \xrightarrow{\ \wedge^r\varphi\ } \det(\cE^\vee) \otimes \bw{r}\cF \cong \det(\cE^\vee) \otimes \det(\cF).
\end{equation*}
Its composition with $\varphi^\vee$ is zero, hence it induces a morphism $\cC \to \det(\cE^\vee) \otimes \det(\cF)$.
This morphism is an isomorphism away of $D_1(\varphi)$, so since $\cC$ is torsion free, it is injective and we have a left exact sequence
\begin{equation*}
0 \to \cF^\vee \xrightarrow{\ \varphi^\vee\ } \cE^\vee \xrightarrow{\ \wedge^r\varphi\ } \det(\cE^\vee) \otimes \det(\cF).
\end{equation*}
But since the scheme structure on $D_1(\varphi)$ is given by the minors of size $r$ of $\varphi$, i.e.\ by the entries of $\wedge^r\varphi$,
the image of $\wedge^r\varphi$ is the twist of the ideal of $D_1(\varphi)$, hence we have an exact sequence
%
%
\begin{equation}\label{eq:exact-seq-varphi}
0 \to \cF^\vee \xrightarrow{\ \varphi^\vee\ } \cE^\vee \xrightarrow{\ \wedge^r\varphi\ }  \det(\cE^\vee) \otimes \det(\cF) \to \det(\cE^\vee) \otimes \det(\cF)\vert_{D_1(\varphi)} \to 0.
\end{equation}

Let $\tilde{S} \subset \P_S(\cE)$ be the zero locus of $\varphi$ on $\P_S(\cE)$. 
Since $\tilde{S}$ is the zero locus of a rank~$r$ vector bundle on a Cohen--Macaulay variety $\P_S(\cE)$ of dimension $\dim(S) + r$, the dimension of any component of $\tilde{S}$ is greater or equal than~$\dim S$.
On the other hand, the fibers of $\tilde{S}$ over $D_k(\varphi) \setminus D_{k+1}(\varphi)$ are projective spaces of dimension $k$, 
hence $\tilde{S}$ has a stratification with strata of dimension $\dim(D_k(\varphi)) + k$ which is less than $\dim S$ for $k \ge 1$.
It follows that $\tilde{S}$ is irreducible of dimension $\dim\tilde{S} = \dim S$, and the map $p:\tilde{S} \to S$ is birational.

Now let us compute the pushforward to $S$ of the relatively very ample line bundle $\cO(1)\vert_{\tilde{S}}$.
As we already have seen, the dimension of the zero locus of the section $\varphi$ of $p^*\cF \otimes \cO(1)$ on~$\P_S(\cE)$ equals $\dim \tilde{S} = \dim \P_S(\cE) - r$, hence the section is regular, so the Koszul complex
\begin{equation*}
0 \to \bw{r}(p^*\cF^\vee) \otimes \cO(-r) \to \dots \to \bw2(p^*\cF^\vee) \otimes \cO(-2) \to p^*\cF^\vee \otimes \cO(-1) \to \cO \to \cO_{\tilde{S}} \to 0
\end{equation*}
is a resolution of the structure sheaf of $\tilde{S}$.
Twisting it by $\cO(1)$ and pushing forward to $S$, we obtain an exact sequence
\begin{equation*}
0 \to \cF^\vee \xrightarrow{\ \varphi^\vee\ } \cE^\vee \to p_*(\cO_{\tilde{S}}(1)) \to 0.
\end{equation*}
Comparing it with~\eqref{eq:exact-seq-varphi} we deduce an isomorphism with a twisted ideal of $D_1(\varphi)$:
\begin{equation*}
p_*(\cO_{\tilde{S}}(1)) \cong \cI_{D_1(\varphi)} \otimes \det(\cE^\vee) \otimes \det(\cF).
\end{equation*}
It follows that $p:\tilde{S} \to S$ is the blowup of the ideal $\cI_{D_1(\varphi)}$.

Moreover, it follows also that the pushforward of $\det(\cE) \otimes \det(\cF^\vee) \otimes \cO(1)$ is isomorphic to the ideal $\cI_{D_1(\varphi)}$, 
hence this line bundle corresponds to the minus exceptional divisor of the blowup. 
This proves the second part of the Lemma. 
The last part of the Lemma is evident.
\end{proof}

\section{Geometry of general skew forms and generality assumptions}\label{section:forms-and-generality}

We say that a skew-symmetric $p$-form on a vector space $V$ of dimension $n$ is {\sf general}, if its $\PGL(V)$-orbit in $\P(\bw{p}V^\vee)$ is open.
It is a classical fact, that a 2-form is general if and only if its rank is equal to $2\lfloor n/2 \rfloor$,
and that a general 3-form exists if and only if $n \le 8$.

In this section we remind a description of general 3-forms on vector spaces of dimensions 6 and 7 (we will not need the 8-dimensional case so we skip it here, however an interested reader can find a discussion of these in~\cite{kuznetsov2015kuchle}). 
We also discuss some natural subschemes of Grassmannians associated with these forms.

After that we pass to the situation, which is the most important for the rest of the paper: a pair $(\lambda,\mu)$ consisting
of a 4-form and a 2-form on a vector space of dimension 7. We discuss how this pair looks under some generality assumptions
and give an explicit standard presentation for such a pair.


\subsection{A 3-form on a 6-space}\label{subsection:3form-6space}

Let $\BW$ be a vector space of dimension 6.
The following description of general 3-forms on $\BW$ is well known.

\begin{lemma}\label{lemma:3form-general}
A $3$-form $\blam \in \bw3\BW^\vee$ is general if and only if there is a direct sum decomposition
\begin{equation}\label{eq:bw-decomposition}
\BW = A_1 \oplus A_2,
\qquad\qquad 
\dim A_1 = \dim A_2 = 3,
\end{equation}
and 
\begin{equation}\label{eq:blam-general}
\blam = \blam_1 + \blam_2
\end{equation}
for decomposable $3$-forms $0 \ne \blam_1 \in \bw3A_1^\perp$ and $0 \ne \blam_2 \in \bw3A_2^\perp$.
\end{lemma}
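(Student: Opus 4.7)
The lemma is really a statement about orbits of $\GL(\BW)$ on $\bw3\BW^\vee$, so I would prove it by computing the stabilizer of the proposed standard form and invoking the uniqueness of the open orbit: since $\GL(\BW)$ is connected and $\bw3\BW^\vee$ is irreducible, at most one $\GL(\BW)$-orbit in $\bw3\BW^\vee$ can be open. If I can exhibit one form $\blam_0$ of the claimed shape whose orbit is open, the forward direction becomes formal---any general $\blam'$ lies in this orbit, i.e.\ $\blam' = c\,g\cdot \blam_0$ for some $g\in\GL(\BW)$ and $c\in\k^*$, and transporting the decomposition $(A_1,A_2,\blam_1,\blam_2)$ by $g$ (and absorbing $c$ into one of the summands) gives the required decomposition of $\blam'$.

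To verify the reverse direction, I fix bases $a_1,a_2,a_3$ of $A_1$ and $b_1,b_2,b_3$ of $A_2$, with dual bases $a^i,b^j$ of $\BW^\vee$. Then $A_1^\perp = \langle b^1,b^2,b^3\rangle$ and $A_2^\perp = \langle a^1,a^2,a^3\rangle$, so the spaces $\bw3 A_i^\perp$ are one-dimensional, and after rescaling the bases I may take $\blam_0 = a^{123} + b^{123}$. The key geometric invariant is the contraction map $v \mapsto \iota_v\blam_0 \in \bw2\BW^\vee$: a direct coordinate calculation shows that this $2$-form has rank $2$ exactly when $v\in (A_1\cup A_2)\setminus 0$ and rank $4$ otherwise. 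Hence the rank-$\le 2$ locus in $\P(\BW)$ equals $\P(A_1)\sqcup\P(A_2)$, two disjoint planes canonically attached to $\blam_0$, so any element of the stabilizer of $\blam_0$ in $\GL(\BW)$ must preserve the unordered pair $\{A_1,A_2\}$. The stabilizer therefore lies in $(\GL(A_1)\times\GL(A_2))\rtimes\Z/2$, and requiring fixedness of $\blam_0$ itself yields the condition $\det g_1 = \det g_2 = 1$; thus the stabilizer equals $(\SL(A_1)\times\SL(A_2))\rtimes\Z/2$, which has dimension $16$. Then $\dim\GL(\BW) - 16 = 36 - 16 = 20 = \dim\bw3\BW^\vee$, so the orbit of $\blam_0$ is open.

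The principal technical obstacle is the rank analysis of $\iota_v\blam_0$: one must rule out any vector outside $A_1\cup A_2$ dropping the rank below $4$, so as to pin down the decomposition intrinsically from $\blam_0$. Once this is established, the remaining steps (the dimension count and the orbit-uniqueness argument) are mechanical, and both directions of the lemma fall out together.
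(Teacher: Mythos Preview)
The paper does not actually prove this lemma: it introduces the statement with ``the following description of general $3$-forms on $\BW$ is well known'' and then simply records the standard coordinate form $\blam = x_{123} + x_{456}$ without argument. So there is no proof in the paper to compare yours against.

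Your argument is correct and supplies exactly what the paper omits. The stabilizer computation is right: from the block-diagonal shape of the contraction $v \mapsto \iota_v\blam_0$ one sees that the rank-$\le 2$ locus in $\P(\BW)$ is precisely $\P(A_1) \sqcup \P(A_2)$, so any $g$ fixing $\blam_0$ permutes $\{A_1,A_2\}$; within $\GL(A_1)\times\GL(A_2)$ the condition $g\cdot\blam_0 = \blam_0$ forces $\det g_1 = \det g_2 = 1$, giving $\dim\operatorname{Stab}(\blam_0) = 16$ and hence an open orbit. One small remark: you need not worry about the scalar $c$ separately, since scalar matrices $c\cdot\Id \in \GL(\BW)$ act on $\bw3\BW^\vee$ by $c^{-3}$ and $\k$ is algebraically closed, so the $\GL(\BW)$-orbit is already a cone and its image in $\P(\bw3\BW^\vee)$ is a single $\PGL(\BW)$-orbit, matching the paper's definition of ``general''. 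Either way, both summands $\blam_1,\blam_2$ remain nonzero after transport, which is all the lemma requires.
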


In appropriate coordinates a general 3-form can be written as 
\begin{equation}\label{eq:blam-explicit}
\blam = x_{123} + x_{456}.
\end{equation} 
In the rest of the paper we always denote by $A_1$ and $A_2$ the summands of the canonical direct sum decomposition of $\BW$ associated with a general 3-form $\blam$.


\begin{lemma}\label{lemma:gr26-3form}
Assume $\blam$ is a general $3$-form on $\BW$.
Consider the Grassmannian $\Gr(2,\BW)$ and let $\bcU_2 \subset \BW \otimes \cO$ be the tautological subbundle of rank $2$.
The zero locus of the global section~$\blam$ of the vector bundle $\bcU_2^\perp(1)$ 
is isomorphic to the product $\P(A_1) \times \P(A_2)$, and moreover
\begin{equation}\label{eq:gr26-3form-bu2}
\begin{aligned}
\bcU_2\vert_{\P(A_1) \times \P(A_2)} &\cong \cO(-h_1) \oplus \cO(-h_2),
\\
\bcU_2^\perp\vert_{\P(A_1) \times \P(A_2)} & \cong \Omega_{\P(A_1)}(h_1) \oplus \Omega_{\P(A_2)}(h_2),
\end{aligned}
\end{equation}
where $h_1$ and $h_2$ are the hyperplane classes on $\P(A_1)$ and $\P(A_2)$ respectively.
\end{lemma}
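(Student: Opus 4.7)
The plan is to translate the vanishing of the section into an explicit condition on $2$-subspaces, solve it using the standard form of $\blam$, and identify the resulting set-theoretic description with the scheme-theoretic zero locus; the bundle formulas will then follow from the splitting $\BW = A_1 \oplus A_2$.

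First I would describe the section of $\bcU_2^\perp(1)$ attached to $\blam$. The contraction with $\blam$ defines a map $\bw{2}\bcU_2 \otimes \BW \to \cO$ that vanishes on $\bw{2}\bcU_2 \otimes \bcU_2$ (three vectors in a $2$-plane are dependent), so it factors through $\bw{2}\bcU_2 \otimes (\BW/\bcU_2) \to \cO$; adjointly this is a map $\bw{2}\bcU_2 = \cO(-1) \to (\BW/\bcU_2)^\vee = \bcU_2^\perp$, i.e.\ a global section of $\bcU_2^\perp(1)$. It vanishes at $[U_2]$ precisely when $\blam \conv \bw{2}U_2 = 0$, i.e.\ when $\blam$ annihilates $U_2$.

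Next, using Lemma~\ref{lemma:3form-general} I would work in coordinates with $\blam = x_{123} + x_{456}$ and decompose $u_j \in U_2$ as $u_j = u_j' + u_j''$ along $\BW = A_1 \oplus A_2$. The annihilation condition splits into $u_1' \wedge u_2' = 0$ in $\bw{2}A_1$ and $u_1'' \wedge u_2'' = 0$ in $\bw{2}A_2$. Setting $L_i := \Span(u_1^{(i)}, u_2^{(i)})$, these conditions force $\dim L_i \le 1$, and since $U_2 \subset L_1 \oplus L_2$ has dimension $2$, both $L_i$ must be lines and $U_2 = L_1 \oplus L_2$. This produces a morphism $\phi : \P(A_1) \times \P(A_2) \to \Gr(2,\BW)$, $(L_1, L_2) \mapsto L_1 \oplus L_2$, whose image is set-theoretically the zero locus $Z_{\blam}$ of the section.

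To upgrade this to a scheme-theoretic isomorphism, I would observe that $\rk(\bcU_2^\perp(1)) = 4$ equals the codimension of $\P(A_1) \times \P(A_2)$ in $\Gr(2,\BW)$, so the section is regular and $Z_{\blam}$ is Cohen--Macaulay of dimension $4$. The stabilizer of $\blam$ in $\GL(\BW)$ contains $\SL(A_1) \times \SL(A_2)$, which acts transitively on $\P(A_1) \times \P(A_2)$, so it suffices to check surjectivity of the differential of the section $T_{[U_2]}\Gr(2,\BW) = \Hom(U_2, \BW/U_2) \to \bcU_2^\perp(1)|_{[U_2]}$ at one point; this single-point transversality check is the main technical step. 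Once verified, $Z_{\blam}$ is smooth of dimension $4$ and equal to $\phi(\P(A_1) \times \P(A_2))$ as a scheme.

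Finally, the splitting $\BW = A_1 \oplus A_2$ makes the description $U_2 = L_1 \oplus L_2$ globally coherent, yielding $\bcU_2|_{Z_{\blam}} = \cO(-h_1) \oplus \cO(-h_2)$, each summand being the pullback of the tautological line subbundle on the corresponding factor. Dualizing the tautological sequence and using the Euler sequences on $\P(A_1)$ and $\P(A_2)$ gives $(\BW/\bcU_2)|_{Z_{\blam}} = T_{\P(A_1)}(-h_1) \oplus T_{\P(A_2)}(-h_2)$, and hence $\bcU_2^\perp|_{Z_{\blam}} = \Omega_{\P(A_1)}(h_1) \oplus \Omega_{\P(A_2)}(h_2)$, as claimed.
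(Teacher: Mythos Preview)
Your argument is correct, but it takes a more hands-on route than the paper. The paper exploits the Pl\"ucker embedding: the contraction $\blam\colon \bw2\BW \to \BW^\vee$ is zero on the summand $A_1\otimes A_2$ and an isomorphism on $\bw2A_1\oplus\bw2A_2$, so the zero locus of the section of $\bcU_2^\perp(1)$ is exactly the scheme-theoretic intersection $\Gr(2,\BW)\cap\P(A_1\otimes A_2)$ inside $\P(\bw2\BW)$. Since the Pl\"ucker quadrics restrict to the Segre quadrics on $\P(A_1\otimes A_2)$, this intersection equals $\P(A_1)\times\P(A_2)$ as a scheme, and no separate transversality or smoothness check is needed. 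Your route---solving the annihilation equations directly and then using the $\SL(A_1)\times\SL(A_2)$-action to propagate a single-point smoothness check---reaches the same conclusion but requires that extra differential computation. The bundle identifications are handled the same way in both proofs.
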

\begin{proof}
The direct sum decomposition~\eqref{eq:bw-decomposition} induces a decomposition
\begin{equation*}
\bw2\BW = \bw2A_1 \oplus \bw2A_2 \oplus (A_1 \otimes A_2).
\end{equation*}
The map $\bw2\BW \xrightarrow{\ \blam\ } \BW^\vee = A_1^\vee \oplus A_2^\vee$ is zero on the third summand 
and gives isomorphisms of the first two summands with $A_1^\vee$ and $A_2^\vee$ respectively.
Hence the zero locus of $\blam$ on $\Gr(2,\BW)$ is the intersection $\Gr(2,\BW) \cap \P(A_1 \otimes A_2) \subset \P(\bw2\BW)$.
It remains to note that the restrictions of the Pl\"ucker quadrics from $\P(\bw2\BW)$ to $\P(A_1 \otimes A_2)$
are the Segre quadrics, cutting out $\P(A_1) \times \P(A_2) \subset \P(A_1 \otimes A_2)$. 

Geometrically, this means that a 2-subspace $\BU_2 \subset \BW$ is in the zero locus of $\blam$ if and only if 
it intersects both subspaces $A_1$ and $A_2$. This means that the restriction of the tautological bundle to $\P(A_1)\times\P(A_2)$
is the direct sum of the pullbacks of the tautological line bundles on $\P(A_1)$ and $\P(A_2)$, i.e.\ gives the first part of~\eqref{eq:gr26-3form-bu2}.
Further, it follows that for the quotient bundle we have
\begin{equation*}
(\BW/\bcU_2)\vert_{\P(A_1) \times  \P(A_2)} \cong
(A_1 \otimes \cO)/\cO(-h_1) \oplus (A_2 \otimes \cO)/\cO(-h_2),
\end{equation*}
so it is isomorphic to the direct sum of pullbacks of the twisted tangent bundles. Dualizing, we get the second part of~\eqref{eq:gr26-3form-bu2}.
%
%
%
%
%
\end{proof}

\begin{corollary}\label{corollary:gr46-3form}
Assume $\blam$ is a general $3$-form on $\BW$.
Consider the Grassmannian $\Gr(4,\BW)$ and let $\bcU_4 \subset \BW \otimes \cO$ be the tautological subbundle of rank $4$.
The zero locus of the global section $\blam$ of the vector bundle $\bw3\bcU_4^\vee$ 
is isomorphic to the product $\Gr(2,A_1) \times \Gr(2,A_2)$, and moreover
\begin{equation}\label{eq:gr46-3form-bu4}
\bcU_4\vert_{\Gr(2,A_1) \times \Gr(2,A_2)} \cong \cU_{2,A_1} \oplus \cU_{2,A_2},
\end{equation}
where $\cU_{2,A_1}$ and $\cU_{2,A_2}$ are the tautological bundles on $\Gr(2,A_1)$ and $\Gr(2,A_2)$ respectively.
\end{corollary}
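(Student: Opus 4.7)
The plan is to work in the coordinates of~\eqref{eq:blam-explicit}, where $\blam = x_{123} + x_{456}$, with the associated direct sum decomposition $\BW = A_1 \oplus A_2$ given by $A_1 = \langle e_1, e_2, e_3\rangle$ and $A_2 = \langle e_4, e_5, e_6\rangle$, so that $\blam = \blam_1 + \blam_2$ with $\blam_1 = x_{456} \in \bw3A_1^\perp$ and $\blam_2 = x_{123} \in \bw3A_2^\perp$. A point $[\BU_4]$ of $\Gr(4, \BW)$ lies in the zero locus of the section of $\bw3\bcU_4^\vee$ determined by $\blam$ precisely when the two restrictions $\blam_1|_{\BU_4}$ and $\blam_2|_{\BU_4}$ sum to zero in the $4$-dimensional space $\bw3\BU_4^\vee$.

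The first and main step is to show that each summand must vanish separately. The key input is that every $3$-form on a $4$-dimensional space is decomposable and, when nonzero, is determined up to a scalar by its $1$-dimensional radical. The radical of $\blam_1|_{\BU_4}$ equals $\BU_4 \cap A_1$ (since $A_1$ is the radical of $x_{456}$ on $\BW$), and the radical of $\blam_2|_{\BU_4}$ equals $\BU_4 \cap A_2$. If both restrictions were nonzero, the cancellation of their sum would force these two lines to coincide, contradicting $A_1 \cap A_2 = 0$. Hence both restrictions vanish.

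Second, the vanishing $\blam_1|_{\BU_4} = 0$ is equivalent to $\dim(\BU_4 \cap A_1) \ge 2$, and analogously for $\blam_2|_{\BU_4} = 0$. Combined with $\dim \BU_4 = 4$ and $A_1 \cap A_2 = 0$ (so that $\dim(\BU_4 \cap A_1) + \dim(\BU_4 \cap A_2) \le 4$), both inequalities become equalities and $\BU_4$ splits as $(\BU_4 \cap A_1) \oplus (\BU_4 \cap A_2)$. Thus the assignment $\BU_4 \mapsto (\BU_4 \cap A_1, \BU_4 \cap A_2)$ gives a bijection of the zero locus with $\Gr(2, A_1) \times \Gr(2, A_2)$; a dimension count for the rank $4$ bundle $\bw3\bcU_4^\vee$ on the $12$-dimensional $\Gr(4, \BW)$ yields expected dimension $8 = \dim(\Gr(2, A_1) \times \Gr(2, A_2))$, upgrading the bijection to a scheme-theoretic isomorphism. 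The decomposition~\eqref{eq:gr46-3form-bu4} of $\bcU_4$ then reads off directly from the splitting of $\BU_4$.

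The subtle step is the separation argument in the second paragraph. A more conceptual alternative would proceed by duality: using the canonical isomorphism $\Gr(4, \BW) \cong \Gr(2, \BW^\vee)$ given by $\BU_4 \mapsto \BU_4^\perp$, the isotropy condition for $\blam$ translates by~\eqref{eq:convolution-duality} into the annihilation condition for $\blam^\vee \in \bw3\BW$ viewed as a general $3$-form on $\BW^\vee$ with canonical decomposition $\BW^\vee = A_1^\perp \oplus A_2^\perp$, so that Lemma~\ref{lemma:gr26-3form} applied to $\blam^\vee$ yields the zero locus on $\Gr(2, \BW^\vee)$ as $\P(A_1^\perp) \times \P(A_2^\perp) \cong \Gr(2, A_2) \times \Gr(2, A_1)$, matching the claim after tracking the duality carefully.
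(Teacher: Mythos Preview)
Your primary argument is correct and gives a direct, self-contained proof that does not invoke Lemma~\ref{lemma:gr26-3form}. The separation step is sound: on a $4$-dimensional space every nonzero $3$-form is decomposable with a one-dimensional radical, and since the radical of $\blam_i|_{\BU_4}$ is $\BU_4\cap A_i$, the equality $\blam_1|_{\BU_4}=-\blam_2|_{\BU_4}$ would force $\BU_4\cap A_1=\BU_4\cap A_2$, impossible as $A_1\cap A_2=0$. The rest follows as you say.

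Two remarks. First, your dimension count is miswritten: $\Gr(4,\BW)=\Gr(4,6)$ has dimension $8$, not $12$, and $\Gr(2,A_1)\times\Gr(2,A_2)\cong\P^2\times\P^2$ has dimension $4$; the correct count is $8-4=4$, so the section is regular and the zero locus is Cohen--Macaulay of the expected dimension. The upgrade from the set-theoretic bijection to a scheme isomorphism then goes through, e.g.\ because the map $(U_{2,A_1},U_{2,A_2})\mapsto U_{2,A_1}\oplus U_{2,A_2}$ is a closed embedding of the smooth $4$-fold $\Gr(2,A_1)\times\Gr(2,A_2)$ into the zero locus, and a Cohen--Macaulay scheme of the same dimension with smooth reduction is reduced. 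Second, the ``more conceptual alternative'' you sketch at the end is precisely the paper's proof: it passes to $\Gr(2,\BW^\vee)$ via $\BU_4\mapsto\BU_4^\perp$, applies Lemma~\ref{lemma:gr26-3form} to the general $3$-form $\blam^\vee$ with its decomposition $\BW^\vee=A_1^\vee\oplus A_2^\vee$, and reads off~\eqref{eq:gr46-3form-bu4} from the second line of~\eqref{eq:gr26-3form-bu2}. The duality route is shorter and makes the bundle identification immediate, while your direct route avoids the bookkeeping of~\eqref{eq:convolution-duality} and is more transparent about why the locus picks out subspaces split by $A_1\oplus A_2$.
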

\begin{proof}
We have a canonical isomorphism $\Gr(4,\BW) \cong \Gr(2,\BW^\vee)$, that takes the bundle $\bw3\bcU_4^\vee$ on the first Grassmannian to the bundle $(\bcU'_2)^\perp(1)$ on the second, 
and the global section $\blam$ of the first to the global section $\blam^\vee$ of the second. 
Hence the zero locus of $\blam$ in $\Gr(4,\BW)$ is isomorphic to the global section of $\blam^\vee$ in $\Gr(2,\BW^\vee)$.
Clearly, $\blam^\vee$ is a general 3-form on $\BW^\vee$ corresponding to the direct sum decomposition $\BW^\vee = A_1^\vee \oplus A_2^\vee$.
Hence by Lemma~\ref{lemma:gr26-3form} the zero locus is isomorphic to $\P(A_1^\vee) \times \P(A_2^\vee) \cong  \Gr(2,A_1) \times \Gr(2,A_2)$.
The statement about the restriction of the tautological bundle follows from the second part of~\eqref{eq:gr26-3form-bu2}.
\end{proof}


Every 2-form $\mu \in \bw2\BW^\vee$ induces a pairing between the subspaces $A_1,A_2 \subset \BW$.

\begin{lemma}\label{lemma:gr26-3form-2form}
Assume $\blam$ is a general $3$-form on $\BW$.
If the pairing between the subspaces $A_1$ and~$A_2$ induced by a $2$-form $\mu$ is nondegenerate then
the zero locus of the global section $(\blam,\mu)$ of the vector bundle $\bcU_2^\perp(1) \oplus \cO(1)$ on the Grassmannian $\Gr(2,\BW)$
is isomorphic to the flag variety $\Fl(1,2;A_1) \cong \Fl(1,2;A_2) \subset \P(A_1) \times \P(A_2)$.
\end{lemma}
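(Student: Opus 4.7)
The strategy is to use Lemma~\ref{lemma:gr26-3form} to reduce to an explicit calculation on $\P(A_1)\times\P(A_2)$, and then to recognize the resulting incidence variety as a flag variety.

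First, I would break the section $(\blam,\mu)$ into its two parts. By Lemma~\ref{lemma:gr26-3form}, the zero locus of $\blam$ alone on $\Gr(2,\BW)$ is $\P(A_1)\times\P(A_2)$, on which $\bcU_2 \cong \cO(-h_1)\oplus\cO(-h_2)$. Dualizing the top exterior power gives $\cO(1) = \det\bcU_2^\vee \cong \cO(h_1+h_2)$, so the residual zero locus is cut out of $\P(A_1)\times\P(A_2)$ by the restriction of the section $\mu$ of $\cO(1)$.

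Second, I would identify this restricted section with the pairing $\mu|_{A_1\otimes A_2}$. By K\"unneth,
\begin{equation*}
H^0(\P(A_1)\times\P(A_2),\cO(h_1+h_2)) = A_1^\vee \otimes A_2^\vee.
\end{equation*}
At a point $(\k a_1,\k a_2)\in\P(A_1)\times\P(A_2)$ the corresponding 2-subspace is $\BU_2=\k a_1\oplus\k a_2$, and the evaluation of $\mu\in\bw{2}{\BW^\vee}$ on $a_1\wedge a_2$ is precisely the value of $\mu|_{A_1\otimes A_2}$ on $(a_1,a_2)$. Equivalently, in the decomposition $\bw{2}{\BW^\vee} = \bw{2}{A_1^\vee}\oplus(A_1^\vee\otimes A_2^\vee)\oplus\bw{2}{A_2^\vee}$, only the middle summand contributes to the restricted section, and it does so by the pairing induced by $\mu$.

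Third, under the assumption that this pairing is nondegenerate, it yields an isomorphism $\phi:A_1\isomto A_2^\vee$, and the residual zero locus becomes the universal hyperplane $\{(\k a_1,\k a_2):\phi(a_1)(a_2)=0\}$. Transporting by $\phi$ identifies it with $\{(\k\xi,\k a_2)\in\P(A_2^\vee)\times\P(A_2):\xi(a_2)=0\} = \Fl(1,2;A_2)$, and transporting by $\phi^{-1}$ identifies it with $\Fl(1,2;A_1)$; this also explains the stated isomorphism $\Fl(1,2;A_1)\cong\Fl(1,2;A_2)$. No step presents a real obstacle; the only mildly delicate point is the explicit K\"unneth identification of $\mu|_{\P(A_1)\times\P(A_2)}$, which is a direct consequence of the splitting of $\bcU_2$ provided by Lemma~\ref{lemma:gr26-3form}.
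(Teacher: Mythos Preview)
Your proof is correct and follows essentially the same route as the paper: reduce via Lemma~\ref{lemma:gr26-3form} to a $(1,1)$-divisor on $\P(A_1)\times\P(A_2)$, identify it with the pairing $\mu|_{A_1\otimes A_2}$, and use nondegeneracy to recognize the flag variety. The only difference is that you spell out the K\"unneth identification and the role of the decomposition of $\bw{2}{\BW^\vee}$ more explicitly than the paper does.
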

\begin{proof}
By Lemma~\ref{lemma:gr26-3form} the zero locus of $\blam$ is isomorphic to $\P(A_1) \times \P(A_2)$. By~\eqref{eq:gr26-3form-bu2} we have
\begin{equation*}
\cO(1)\vert_{\P(A_1)\times \P(A_2)} \cong
\det(\bcU_2^\vee)\vert_{\P(A_1)\times \P(A_2)} \cong
\cO(h_1 + h_2),
\end{equation*}
hence the zero locus of $\mu$ in ${\P(A_1)\times \P(A_2)}$ is a divisor of bidegree $(1,1)$.
Clearly, this divisor corresponds to the pairing between $A_1$ and $A_2$ induced by the form $\mu$.
So, if this pairing is nondegenerate, it identifies $A_2$ with $A_1^\vee$, and the corresponding divisor
with the flag variety.
\end{proof}

\subsection{A 4-form on a 7-space}\label{subsection:4form-4space}

Let $W$ be a vector space of dimension 7. 
Recall that with each $4$-form $\lambda$ on $W$ we can associate a 3-form $\lambda^\vee$ on the dual space.
Further on we will work more with 4-forms, but some geometric constructions are better adapted to 3-forms, so it is useful to keep this correspondence in mind.

Consider a 3-form $\lambda^\vee \in \bw3W$ on $W^\vee$ as a global section
of the vector bundle $\Omega^2_{\P(W^\vee)}(3)$. It gives a morphism of vector bundles
\begin{equation*}
T_{\P(W^\vee)} \xrightarrow{\ \lambda^\vee\ } \Omega_{\P(W^\vee)}(3),
\end{equation*}
which is easily seen to be skew-symmetric (up to a twist). Since 
\begin{equation*}
\det(\Omega_{\P(W^\vee)}(3)) \otimes \det(T_{\P(W)})^{-1} \cong 
\cO(-7 + 3\cdot 6 - 7) = 
\cO(4),
\end{equation*}
the Pfaffian of the map $\lambda^\vee$ is a section of $\cO(2)$, hence the degeneracy degeneracy locus of $\lambda^\vee$ is either a quadric, or the whole space. 
We denote this degeneracy locus by $\bQ^\vee_\lambda \subset \P(W^\vee)$ (geometrically, $\bQ^\vee_\lambda$ is the set of vectors $w^\vee \in W^\vee$ such that the rank of the 2-form $\lambda^\vee \conv w^\vee$ is less than 6) 
and by $\bq_\lambda \in \Sym^2 W$ its equation.
In case $\bQ^\vee_\lambda$ is a smooth quadric, we denote by~$\bQ_\lambda \subset \P(W)$ its projective dual, it is also smooth then and its equation is $\bq^{-1}_\lambda \in \Sym^2W^\vee$.

\begin{lemma}\label{lemma:4form-general}
The following conditions are equivalent:
\begin{enumerate}\renewcommand{\theenumi}{\roman{enumi}}
\item 
a $4$-form $\lambda \in \bw4W^\vee$ is general;
\item 
a $3$-form $\lambda^\vee \in \bw3W$ is general;
\item 
the degeneracy locus $\bQ^\vee_\lambda \subset \P(W^\vee)$ is a smooth quadric;
\item 
$\lambda = x_{0123} + x_{0456} + x_{1256} + x_{1346} + x_{2345}$ in some basis;
\item 
$\lambda^\vee = e_{456} + e_{123} + e_{034} + e_{025} + e_{016}$ in some basis.
\end{enumerate}
\end{lemma}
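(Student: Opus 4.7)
The plan is to exploit the Hodge-type duality between 4-forms on $W$ and 3-forms on~$W^\vee$, together with the classical classification of $\GL$-orbits on $\bw{3}{W}$ for $\dim W = 7$, and to verify the explicit coordinate expression by a direct Pfaffian computation. It is convenient to reduce everything to the dual 3-form $\lambda^\vee$.

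First I would establish the trivial equivalences (i) $\Leftrightarrow$ (ii) and (iv) $\Leftrightarrow$ (v). The convolution map $\bw{4}{W^\vee} \isomto \bw{3}{W}$, $\lambda \mapsto \lambda^\vee = \epsilon \conv \lambda$, associated to a fixed volume $\epsilon = e_0 \wedge \dots \wedge e_6$, is $\GL(W)$-equivariant up to the character $\det$, and hence $\PGL(W)$-equivariant on projectivizations; this gives (i) $\Leftrightarrow$ (ii). A direct computation in the dual basis shows that the explicit 4-form in (iv) is sent (up to rescaling) to the explicit 3-form in (v), yielding (iv) $\Leftrightarrow$ (v). Thus it suffices to prove (ii) $\Leftrightarrow$ (iii) $\Leftrightarrow$ (v).

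Next I would verify (v) $\Rightarrow$ (ii) and (v) $\Rightarrow$ (iii) by direct computation on the explicit model. For (ii) I rely on the classical fact that the stabilizer in $\PGL(W)$ of the 3-form $e_{456} + e_{123} + e_{034} + e_{025} + e_{016}$ is the exceptional simple group $\Gtwo$, of dimension $14$; since $\dim\PGL(W) = 48 = 34 + 14 = \dim\P(\bw{3}{W}) + \dim\Gtwo$, its $\PGL(W)$-orbit is open. For (iii), I would write the skew-symmetric morphism $\lambda^\vee\colon T_{\P(W^\vee)} \to \Omega_{\P(W^\vee)}(3)$ as a $7\times 7$ matrix with entries linear in $x_0,\dots,x_6$, and compute its Pfaffian directly; a direct check shows that for the $\lambda^\vee$ of (v) this Pfaffian is a nondegenerate quadratic form, so $\bQ^\vee_\lambda$ is a smooth quadric.

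The essential implication is (ii) $\Rightarrow$ (v). Here I would invoke the Schouten--Gurevich classification, which asserts that $\GL(W)$ acts with finitely many orbits on $\bw{3}{W}$ when $\dim W = 7$; in particular, $\P(\bw{3}{W})$ has a unique open $\PGL(W)$-orbit. By the previous step this open orbit contains the explicit representative of (v), so every general 3-form is $\GL(W)$-conjugate to it. This classification is the main nontrivial input --- every other step is either formal or a direct calculation with the five-term standard form.

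Finally, for (iii) $\Rightarrow$ (ii) I would argue that $\det(\bq_\lambda) \in \Sym^{14}(W)$ is a $\PGL(W)$-semi-invariant polynomial in~$\lambda$, so the locus $\{\det(\bq_\lambda) \ne 0\}$ is a $\PGL(W)$-invariant open subset of $\bw{4}{W^\vee}$. By (v) $\Rightarrow$ (iii) it is nonempty and hence contains the unique open orbit of Schouten--Gurevich; a case-by-case inspection of the standard forms of the finitely many subgeneric orbits (or, more slickly, the classical fact that the ring of $\SL(W)$-invariants on $\bw{3}{W}$ is generated by a single polynomial of degree $7$, whose square is proportional to $\det(\bq_\lambda)$) shows that $\det(\bq_\lambda)$ vanishes on every other orbit, closing the cycle. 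The main obstacle is thus only the appeal to the classification of 3-forms on a 7-space: this result is classical but nontrivial, and is most cleanly handled by citation.
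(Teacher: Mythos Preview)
The paper does not actually prove this lemma: it is stated without proof and treated as a classical fact (the $\GL_7$-orbit structure on $\bw3W$ and the associated $\Gtwo$-invariant quadric are well known). Your argument is correct and supplies exactly the kind of justification the paper omits.

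A few minor comments. Your dimension count for (v) $\Rightarrow$ (ii) is clean and self-contained. For (iii) $\Rightarrow$ (ii), the slick route you mention --- that the ring of $\SL(W)$-invariants on $\bw3W$ is generated by a single degree-$7$ invariant whose square is $\det(\bq_\lambda)$ --- is the standard way to close the cycle without a case analysis; if you prefer to avoid invoking that, note that since $\{\det(\bq_\lambda)\ne 0\}$ is a nonempty $\PGL(W)$-stable Zariski-open set, it automatically contains the open orbit, and you only need to check that no \emph{proper} orbit closure of dimension $34$ exists (which again comes down to the Schouten--Gurevich list). Either way, the appeal to the finite-orbit classification is unavoidable and is indeed the only nontrivial external input, as you correctly identify.
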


A straightforward computation shows that for $\lambda = x_{0123} + x_{0456} + x_{1256} + x_{1346} + x_{2345}$, the associated quadric $\bQ_\lambda$ is
\begin{equation}\label{eq:qlambda-explicit}
\bQ_\lambda = \{ x_0^2 - x_1x_6 - x_2x_5 - x_3x_4 = 0 \} \subset \P(W).
\end{equation} 

The stabilizer of a general 4-form $\lambda$ in $\GL(W)$ is the simple algebraic group $\Gtwo$, the space $W$ is one of its fundamental representations, and the action of $\Gtwo$ on $\P(W)$ has just two orbits. 
The closed orbit is the quadric $\bQ_\lambda$, and the open orbit is its complement.
In particular, $\bQ_\lambda$ is one of the two minimal compact homogeneous spaces of $\Gtwo$.


\begin{lemma}\label{lemma:la-w-gen}
Let $\lambda \in \bw4W^\vee$ be a general $4$-form.
For every vector $w \in W$ such that $\bq^{-1}_\lambda(w,w) = 1$, let $w^\vee = \bq^{-1}_\lambda(w) \in W^\vee$ be its polar covector. 
Then there is a direct sum decomposition of the space $W$ and the corresponding decomposition of the form $\lambda$
\begin{equation*}
W = \k w \oplus \Ker(w^\vee),
\qquad
\lambda = w^\vee \wedge \blam + \lambda',
\end{equation*}
with $\blam \in \bw3(\Ker(w^\vee))^\vee$ and $\lambda' \in \bw4(\Ker(w^\vee))^\vee$.
Moreover, $\blam$ is a general $3$-form on $\Ker(w^\vee)$, and if $\Ker(w^\vee) = A_1 \oplus A_2$
is the corresponding direct sum decomposition, then the form $\lambda'$ annihilates $\bw3A_1$ and $\bw3A_2$,
and defines a nondegenerate pairing between $\bw2A_1$ and $\bw2A_2$.
%
%
%
\end{lemma}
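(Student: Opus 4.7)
The plan is to first establish the direct sum decomposition of $W$ and of $\lambda$ by linear algebra, and then reduce the remaining three claims — generality of $\blam$, annihilation of $\bw 3 A_1$ and $\bw 3 A_2$ by $\lambda'$, and nondegeneracy of the induced pairing — to a single explicit computation, by invoking transitivity of $\Gtwo = \Aut_{\GL(W)}(\lambda)$ on the affine quadric $\{\bq^{-1}_\lambda(w,w) = 1\}$.

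For the decomposition, note that $w^\vee(w) = \bq^{-1}_\lambda(w,w) = 1 \neq 0$, so $W = \k w \oplus \Ker(w^\vee)$. Dually, identifying $\Ker(w^\vee)^\vee$ with $(\k w)^\perp \subset W^\vee$, one obtains
\[
\bw 4 W^\vee = \bw 4 (\k w)^\perp \,\oplus\, w^\vee \wedge \bw 3 (\k w)^\perp ,
\]
and the two components of $\lambda$ with respect to this splitting are precisely the required $\lambda'$ and $\blam$. Concretely, $\blam = w \conv \lambda$, which lies in $\bw 3 (\k w)^\perp$ because $w \conv (w \conv \lambda) = 0$.

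The stabilizer $\Gtwo \subset \GL(W)$ of $\lambda$ preserves $\bQ^\vee_\lambda$, hence also $\bq^{-1}_\lambda$, and so acts on the smooth affine quadric $\{w \in W : \bq^{-1}_\lambda(w,w) = 1\}$, an irreducible variety of dimension $6$. The stabilizer in $\Gtwo$ of the vector $e_0$ chosen below is isomorphic to $\SL_3$ (of dimension $8$), so the $\Gtwo$-orbit of $e_0$ has dimension $14 - 8 = 6$; the action is therefore transitive. Since the constructions of $\blam$ and $\lambda'$ are $\Gtwo$-equivariant, it suffices to verify the three remaining assertions for one explicit pair. Take $\lambda = x_{0123}+x_{0456}+x_{1256}+x_{1346}+x_{2345}$ as in Lemma~\ref{lemma:4form-general}(iv) and $w = e_0$; then $\bq^{-1}_\lambda(e_0,e_0) = 1$ by~\eqref{eq:qlambda-explicit}, $w^\vee = x_0$, and $\Ker(w^\vee) = \Span(e_1,\dots,e_6)$.

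A direct inspection gives $\blam = e_0 \conv \lambda = x_{123}+x_{456}$ and $\lambda' = x_{1256}+x_{1346}+x_{2345}$. By~\eqref{eq:blam-explicit} and Lemma~\ref{lemma:3form-general}, $\blam$ is general with canonical decomposition $A_1 = \Span(e_1,e_2,e_3)$, $A_2 = \Span(e_4,e_5,e_6)$. Each monomial of $\lambda'$ has exactly two indices in $\{1,2,3\}$ and exactly two in $\{4,5,6\}$, whence $\lambda' \conv e_{123} = 0 = \lambda' \conv e_{456}$. Finally, in the bases $(e_{12},e_{13},e_{23})$ of $\bw 2 A_1$ and $(e_{56},e_{46},e_{45})$ of $\bw 2 A_2$ the matrix of the pairing induced by $\lambda'$ is the $3\times 3$ identity, hence is nondegenerate. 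The one step requiring care is the $\Gtwo$-transitivity on the affine quadric; the dimension count above is clean, but if one preferred to avoid representation-theoretic input one could instead phrase each of the three remaining claims as an open condition on pairs $(\lambda,w)$ and conclude by irreducibility from the explicit verification at $(\lambda,e_0)$.
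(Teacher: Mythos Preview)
Your approach matches the paper's: use transitivity of $\Gtwo$ to reduce to a single explicit verification at $w=e_0$, and your computations there are correct. There are, however, two small issues in the reduction step.

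First, the dimension count shows only that the $\Gtwo$-orbit of $e_0$ in the affine quadric $\{\bq^{-1}_\lambda=1\}$ is open, not that it is everything; an open orbit of a reductive group in an irreducible affine variety need not be the whole variety. Second, your proposed fallback --- treating the three claims as open conditions on pairs $(\lambda,w)$ --- does not work for the assertion that $\lambda'$ annihilates $\bw3A_1$ and $\bw3A_2$: on the locus where $\blam$ is general (so that $A_1,A_2$ are well-defined), this is a \emph{closed} condition, and verifying a closed condition at one point of an irreducible family does not propagate.

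The paper sidesteps both issues by using only the \emph{projective} transitivity of $\Gtwo$ on $\P(W)\setminus\bQ_\lambda$, recorded in the text immediately preceding the lemma. This suffices because all three claims are invariant under rescaling $w$: replacing $w$ by $tw$ replaces $w^\vee$ by $tw^\vee$ and $\blam$ by $t^{-1}\blam$, while $\Ker(w^\vee)$, the decomposition $A_1\oplus A_2$, and $\lambda'$ are unchanged. So once $g\in\Gtwo$ takes $[w]$ to $[e_0]$, the claims for $w$ follow from those for $e_0$.
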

\begin{proof}
Since the group $\Gtwo$ acts transitively on the complement of $\bQ_\lambda$, it is enough to check all the properties for just one vector $w$. 
So, choose a basis as in Lemma~\ref{lemma:4form-general}(iv), so that the quadric $\bQ_\lambda$ is given by~\eqref{eq:qlambda-explicit},
and take $w = e_0$. Then $w^\vee  = x_0$, and hence
\begin{equation*}
\blam = x_{123} + x_{456}
\qquad \text{and}\qquad
\lambda' = x_{1256} + x_{1346} + x_{2345}.
\end{equation*}
Thus $\blam$ corresponds to the direct sum decomposition of $\BW = A_1 \oplus A_2$ with $A_1 = \langle e_1,e_2,e_3 \rangle$ and $A_2 = \langle e_4, e_5, e_6 \rangle$, 
the form $\lambda'$ annihilates both $A_1$ and $A_2$ (i.e.\ $\lambda' \conv e_{123} = \lambda' \conv e_{456} = 0$),
and the pairing between the spaces $\bw2A_1 = \langle e_{12}, e_{13}, e_{23} \rangle$ and $\bw2A_2 = \langle e_{45}, e_{46}, e_{56} \rangle$ given by $\lambda'$ is nondegenerate.
%
%
%
%
%
\end{proof}

The other minimal compact homogeneous variety of $\Gtwo$ can be described as follows. 

%


\begin{lemma}[\cite{mukai1989fano}]\label{lemma:g2-grassmannian}
The zero locus of the global section $\lambda \in H^0(\Gr(5,W),\bw4\cU_5^\vee)$ in $\Gr(5,W)$
is a minimal compact homogeneous variety of the group $\Gtwo$.
\end{lemma}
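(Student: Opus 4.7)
The plan is to leverage the $\Gtwo$-symmetry. By Lemma~\ref{lemma:4form-general} and the discussion following it, the stabilizer of $\lambda$ in $\GL(W)$ is the simple group $\Gtwo$, so it acts on the zero locus $Z \subset \Gr(5,W)$ of the section $\lambda$ of $\bw4\cU_5^\vee$. The rank of $\bw4\cU_5^\vee$ is $\binom{5}{4} = 5$, so the expected dimension of $Z$ equals $\dim\Gr(5,W) - 5 = 5$, matching the dimension of the closed orbits of $\Gtwo$ in its minimal projective embeddings. I would therefore show that $Z$ is smooth of expected dimension $5$ at an explicit point, deduce it is a $\Gtwo$-orbit closure, and identify it with the appropriate $\Gtwo/P$.

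For the explicit point I use the standard form of Lemma~\ref{lemma:4form-general}(iv) and take $U_5 = \langle e_0, e_1, e_2, e_4, e_5\rangle$: every monomial of $\lambda = x_{0123} + x_{0456} + x_{1256} + x_{1346} + x_{2345}$ involves either $x_3$ or $x_6$, both of which vanish on $U_5$, hence $\lambda\vert_{U_5} = 0$. To check smoothness, I would compute the tangent map of the section. Writing $W/U_5 = \langle\bar e_3, \bar e_6\rangle$ and $y_i := x_i\vert_{U_5}$, the differential reads
\begin{equation*}
\Hom(U_5, W/U_5) \longrightarrow \bw4 U_5^\vee, \qquad (f, g) \longmapsto f \wedge (\iota_{e_3}\lambda)\vert_{U_5} + g \wedge (\iota_{e_6}\lambda)\vert_{U_5}.
\end{equation*}
A direct calculation yields $(\iota_{e_3}\lambda)\vert_{U_5} = -y_{012} - y_{245}$ and $(\iota_{e_6}\lambda)\vert_{U_5} = -y_{045} - y_{125}$; running $f$ and $g$ over the basis $y_0,y_1,y_2,y_4,y_5$ of $U_5^\vee$ produces all five basis elements $y_{0124}, y_{0125}, y_{0145}, y_{0245}, y_{1245}$ of $\bw4 U_5^\vee$, so the differential is surjective and $Z$ is smooth of dimension $5$ at $[U_5]$.

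Consequently, $\lambda$ is a regular section and $Z$ is a local complete intersection of pure dimension $5$. By $\Gtwo$-invariance, the orbit $\Gtwo\cdot[U_5]$ is also $5$-dimensional, hence open in the irreducible component of $Z$ containing~$[U_5]$; as a projective $\Gtwo$-orbit of the ambient dimension, it is automatically closed, and so equals that component, which is isomorphic to $\Gtwo/P$ for some $9$-dimensional parabolic $P$. By the classification of maximal parabolics in $\Gtwo$, the only $5$-dimensional minimal compact $\Gtwo$-homogeneous varieties are the quadric $\bQ_\lambda \subset \P(W)$ and the other $\Gtwo$-Grassmannian; since our component sits naturally in $\Gr(5,W) \cong \Gr(2,W^\vee)$ (rather than $\P(W)$) and is cut out by sections of $\bw4\cU_5^\vee$, it must be the latter.

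The main obstacle is ruling out extra components, i.e.\ showing $Z$ is connected. This can be done by exploiting regularity of the section: the Koszul complex on $\Gr(5,W)$ resolves $\cO_Z$, so $\chi(\cO_Z) = \sum_i(-1)^i\chi(\bw{i}\bw4\cU_5)$, and a Borel--Weil--Bott computation on $\Gr(5,W)$ should give $\chi(\cO_Z) = 1$, forcing $Z$ to be connected and therefore equal to the single $\Gtwo$-orbit identified above.
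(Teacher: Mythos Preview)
The paper does not prove this lemma; it simply records it with a citation to Mukai~\cite{mukai1989fano}. So there is no argument in the paper to compare against, and your write-up is an attempt to supply a proof where the author chose to quote the literature.

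Your strategy (exhibit a point, check smoothness there, use the $\Gtwo$-action, then argue connectedness) is reasonable, and your explicit computations at $U_5=\langle e_0,e_1,e_2,e_4,e_5\rangle$ are correct. There is, however, a genuine circularity in the last two steps. From smoothness of $Z$ at a single point you conclude ``$\lambda$ is a regular section and $Z$ is a local complete intersection of pure dimension $5$''; this does not follow --- regularity means every component has codimension $5$, and you have only checked one component. You then invoke the Koszul resolution to compute $\chi(\cO_Z)$ and deduce connectedness, but exactness of the Koszul complex already presupposes regularity of the section. So the connectedness argument assumes what it is meant to prove. The sentence ``as a projective $\Gtwo$-orbit of the ambient dimension, it is automatically closed'' is also too quick: an open orbit in a projective variety need not be closed. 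This particular step can be salvaged (a $5$-dimensional orbit has $9$-dimensional stabilizer, and in $\Gtwo$ any connected subgroup of dimension $9$ is a maximal parabolic, so the orbit is compact), but you should say so.

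A cleaner route that avoids the circularity is to pass to the dual description $\Gr(5,W)\cong\Gr(2,W^\vee)$ and show directly that the locus of $2$-planes annihilated by a general $3$-form $\lambda^\vee$ is a single $\Gtwo$-orbit: classify the (finitely many) $\Gtwo$-orbits on $\Gr(2,W^\vee)$ and check which ones satisfy the annihilation condition. Alternatively, once you know the orbit $\Gtwo\!\cdot[U_5]$ is closed and $5$-dimensional, you can prove global regularity by a separate argument (e.g.\ bounding the dimension of every fibre of $\lambda$ viewed as a map to the bundle) before running Koszul.
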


This homogeneous variety is usually called the {\sf adjoint variety} of group $\Gtwo$.
In what follows we will denote it by $\Gr_\lambda(5,W)$.

By duality, one can get another description of the adjoint variety.
Recall the canonical isomorphism $\Gr(5,W) \cong \Gr(2,W^\vee)$ (defined by $(U_5 \subset W) \mapsto (U_5^\perp \subset W^\vee)$).
By~\eqref{eq:convolution-duality} this isomorphism takes $\Gr_\lambda(5,W)$ to the subvariety of $\Gr(2,W^\vee)$ parameterizing all 2-subspaces $U'_2 \subset W^\vee$ annihilated by the 3-form $\lambda^\vee$ on $W^\vee$. 
We denote this subvariety by $\Gr_{\lambda^\vee}(2,W^\vee)$.

\begin{remark}\label{remark:gr27-g2}
Note that from this description it is clear that if $w^\vee \in \P(W^\vee) \setminus \bQ^\vee_\lambda$, then $w^\vee$ is not contained in any 2-subspace $U'_2 \subset W^\vee$, corresponding to a point of $\Gr_{\lambda^\vee}(2,W^\vee)$.
\end{remark}




\subsection{The structure of the data and genericity assumptions}\label{subsection:genericity-assumptions}

Let $W$ be a vector space of dimension 7, and
\begin{equation*}
\lambda \in \bw4W^\vee,
\qquad
\mu \in \bw2W^\vee,
\end{equation*}
be a 4-form and a 2-form on $W$. In this section we discuss how a pair $(\lambda,\mu)$ looks under some genericity assumptions 
and introduce some notions that will be actively used further on.

\underline{Assumption 1:} We assume that the form $\lambda$ is general (i.e.\ lies in the open $\PGL(W)$-orbit), the form $\mu$ is general (i.e.\ lies in the open $\PGL(W)$-orbit), 
and the kernel of the form $\mu$ is in a general position with respect to the form $\lambda$.

As it was discussed above, these assumptions can be explicitly reformulated as
\begin{equation}\label{assumption:lambda-mu-general}
\begin{cases}
\text{the quadrics $\bQ_\lambda^\vee \subset \P(W^\vee)$ and $\bQ_\lambda \subset \P(W)$ are smooth;}\\
\rank(\mu) = 6;\\
w_0 \not\in \bQ_\lambda \subset \PP(W).
\end{cases}
\end{equation}
Here $\bQ_\lambda^\vee$ and $\bQ_\lambda$ are the quadrics defined in section~\ref{subsection:4form-4space}, and $w_0$ is defined to be a generator of the one-dimensional kernel space of $\mu$ on $W$,
normalized by the condition $\bq_\lambda^{-1}(w_0,w_0) = 1$.

%
%
%

As in Lemma~\ref{lemma:la-w-gen} we denote by $w_0^\vee = \bq^{-1}_\lambda(w_0)$ the polar covector to the point $w_0$, so that
\begin{equation*}\label{eq:w0v-w0}
w_0^\vee(w_0) = 1,
\end{equation*} 
and consider the direct sum decomposition defined by the pair $(w_0,w_0^\vee)$:
\begin{equation}\label{eq:bw}
W = \k w_0 \oplus \BW,
\qquad
\BW = \Ker w_0^\vee \subset W,
\end{equation} 
and the induced decomposition of the form $\lambda$ 
\begin{equation}\label{eq:la-la-la}
\lambda = w_0^\vee \wedge \blam + \lambda',
\qquad\qquad
\text{with $\blam \in \bw3\BW^\vee$ and $\lambda' \in \bw4\BW^\vee$.}
\end{equation}
Since $w_0$ generates the kernel of $\mu$, the form $\mu$ can be considered just as a form in $\bw2\BW^\vee$.

Thus, under assumption~\eqref{assumption:lambda-mu-general}, the data $(\lm)$ reduces to the data $(\lambda',\blam,\mu)$ of a 4-form, a 3-form, and a 2-form on a 6-dimensional vector space $\BW$,
such that the 2-form $\mu$ is non-degenerate, the 3-form $\blam$ is general and corresponds to a decomposition $\BW = A_1 \oplus A_2$,
and the 4-form $\lambda'$ annihilates the subspaces $A_1$ and $A_2$, and induces a nondegenerate pairing between the spaces $\bw2A_1$ and $\bw2A_2$ (see Lemma~\ref{lemma:la-w-gen}).

The 2-form $\mu \in \bw2\BW^\vee$ gives a 4-form $\mu^2 := \mu \wedge \mu \in \bw4\BW^\vee$. 
We consider the pencil
\begin{equation*}
t\lambda' + \mu^2 \in \bw4\BW^\vee,
\qquad
t \in \k
\end{equation*}
of 4-forms on $\BW$ generated by $\lambda'$ and $\mu^2$. 
Both forms give a pairing between the spaces $\bw2A_1$ and $\bw2A_2$.
Thinking of these pairings as of linear maps $\bw2A_1 \to \bw2A_2^\vee$, that linearly depend on $t$,
we have a cubic polynomial 
\begin{equation*}
\chi_{A_1,A_2}^{\lambda',\mu^2}(t) := \det\Big(t\lambda' + \mu^2 \colon \bw2A_1 \to \bw2A_2^\vee\Big) \in \k[t].
\end{equation*}

\underline{Assumption 2:}
We assume that the polynomial $\chi_{A_1,A_2}^{\lambda',\mu^2}$ is general. 
Explicitly, we assume 
\begin{equation}\label{assumption:pencil-regular}
\text{the polynomial $\chi_{A_1,A_2}^{\lambda',\mu^2}$ has three distinct roots.}
\end{equation}
This assumption allows to make the form of the data fairly explicit.

\begin{lemma}\label{lemma:bases-a1-a2}
Assume a $3$-form $\blam$ on $\BW$ is general and corresponds to a direct sum decomposition $\BW = A_1 \oplus A_2$, 
and the form $\lambda'$ annihilates $A_1$ and $A_2$ and defines a nondegenerate pairing between $\bw2A_1$ and $\bw2A_2$.
If~\eqref{assumption:pencil-regular} is satisfied then
%
one can choose a basis $e_1,e_2,e_3$ in~$A_1$ and a basis $e_4,e_5,e_6$ in $A_2$ such that 
\begin{equation}\label{eq:lambda-mu-explicit}
\begin{aligned}
\blam & = x_{123} + x_{456},\\
\lambda' & = x_{1256} + x_{1346} + x_{2345},\\
\mu^2 & = (M_1x_{1456} + M_2x_{2456} + M_3x_{3456}) + (M_4x_{1234} + M_5x_{1235} + M_6x_{1236}) \\
& + (K_1x_{2345} + K_2x_{1346} + K_3x_{1256}),
\end{aligned}
\end{equation}
where $M_i, K_i \in \k$, and $K_i$ are pairwise distinct.
Such basis is unique up to rescaling and permuting.
%
%
\end{lemma}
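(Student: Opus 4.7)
The plan is a simultaneous diagonalization of the two pairings $\lambda'$ and the $(2,2)$-part of $\mu^2$ between $\bw2A_1$ and $\bw2A_2$, followed by a choice of bases of $A_1,A_2$ adapted to the resulting eigenspaces. First, decompose both $\lambda'$ and $\mu^2=\mu\wedge\mu$ by the bigrading on $\bw\bullet\BW^\vee$ induced by $\BW=A_1\oplus A_2$; since $\dim A_i=3$, a $4$-form on $\BW$ has only $(3,1),(2,2),(1,3)$-parts. The hypotheses on $\lambda'$ from Lemma~\ref{lemma:la-w-gen} kill its $(3,1)$ and $(1,3)$ components, so $\lambda'$ lies in the $(2,2)$-piece and gives a nondegenerate pairing $\bw2A_1\times\bw2A_2\to\k$. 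The $(3,1)$ and $(1,3)$ parts of $\mu^2$, living in the one-dimensional tensors $\bw3A_1^\vee\otimes A_2^\vee$ and $A_1^\vee\otimes\bw3A_2^\vee$, automatically take the form $\sum_j M_{j+3}\,x_{123j}$ and $\sum_j M_j\,x_{j456}$ in any basis adapted to $A_1\oplus A_2$, so only the $(2,2)$-piece requires further work.

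Using $\lambda'$ to identify $(\bw2A_2)^\vee$ with $\bw2A_1$ turns the $(2,2)$-part of $\mu^2$ into an endomorphism $\psi\in\End(\bw2A_1)$ whose characteristic polynomial equals $\chi_{A_1,A_2}^{\lambda',\mu^2}(t)$ up to sign and a nonzero multiplicative constant. Assumption~\eqref{assumption:pencil-regular} therefore translates into the statement that $\psi$ has three distinct eigenvalues, giving a canonical eigendecomposition $\bw2A_1=\ell_1\oplus\ell_2\oplus\ell_3$ and a matching decomposition $\bw2A_2=\ell'_1\oplus\ell'_2\oplus\ell'_3$ with $\ell_i$ paired to $\ell'_i$ under $\lambda'$. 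The key geometric point is that $\dim A_i=3$ makes every nonzero element of $\bw2A_i$ decomposable, so each $\ell_i$ defines a $2$-plane $\Pi_i\subset A_1$ and each $\ell'_i$ a $2$-plane $\Sigma_i\subset A_2$. Under the canonical isomorphism $\bw2A_1\cong A_1^\vee\otimes\det A_1$ the three lines $\ell_i$ become three eigenlines in $A_1^\vee$, linearly independent since they are eigenspaces of $\psi$ for distinct eigenvalues; equivalently, $\Pi_1\cap\Pi_2\cap\Pi_3=0$ and each pair of planes intersects in a line, the three intersection lines forming a basis of $A_1$. Choose $e_i\in A_1$ to generate $\Pi_j\cap\Pi_k$ for $\{i,j,k\}=\{1,2,3\}$, and perform the analogous construction in $A_2$ with labels matched via the $\lambda'$-pairing of eigenspaces, producing $e_4,e_5,e_6\in A_2$.

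In this basis the eigenlines become $\langle e_{23}\rangle,\langle e_{13}\rangle,\langle e_{12}\rangle$ in $\bw2A_1$ and their $\lambda'$-matched partners among $\langle e_{45}\rangle,\langle e_{46}\rangle,\langle e_{56}\rangle$ in $\bw2A_2$, so both $\lambda'$ and the $(2,2)$-part of $\mu^2$ are supported on exactly the three monomials $x_{1256},x_{1346},x_{2345}$. A final rescaling of the $e_i$ normalizes $\blam$ to $x_{123}+x_{456}$ and the three coefficients of $\lambda'$ to $1$; the three coefficients of the $(2,2)$-part of $\mu^2$ then become the (distinct) eigenvalues $K_1,K_2,K_3$ of $\psi$, yielding~\eqref{eq:lambda-mu-explicit}. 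Uniqueness up to rescaling and permuting is immediate because the triples $\{\Pi_i\}$ and $\{\Sigma_i\}$ are intrinsic invariants of the pair $(\lambda',\mu^2)$, so the three distinguished lines in each $A_i$ are canonical, and the only remaining freedom is the rescaling of each $e_i$ (constrained by the normalization conditions) together with the simultaneous relabeling of the three eigenspaces. The main obstacle --- a minor but tedious bookkeeping one --- is ensuring that the indexing of the $\Sigma_i$ on the $A_2$-side is chosen compatibly with that of the $\Pi_i$ so as to produce precisely the anti-diagonal triple $x_{1256},x_{1346},x_{2345}$ rather than some other triple; this is forced by the $\lambda'$-pairing of matched eigenspaces, since nonvanishing of $\lambda'(\ell_i,\ell'_i)$ requires the four indices involved to form a complete set $\{1,\dots,6\}$ together with the two complementary ones.
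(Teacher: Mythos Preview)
Your proposal is correct and follows essentially the same approach as the paper: use the nondegenerate pairing $\lambda'$ to convert the $(2,2)$-part of $\mu^2$ into an endomorphism of $\bw2A_1$, diagonalize it using the distinct-roots hypothesis, and then transport the eigenbasis to bases of $A_1$ and $A_2$ via the identification $\bw2A_1 \cong A_1^\vee \otimes \det A_1$, finally rescaling to normalize $\blam$ and $\lambda'$. Your write-up is somewhat more explicit than the paper's about the geometric meaning of the eigenlines (decomposable bivectors, intersecting $2$-planes) and about the $(3,1)$ and $(1,3)$ parts of $\mu^2$, but the underlying argument is the same.
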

\begin{proof}
Since the pairing defined by the form $\lambda'$ is nondegenerate, it gives an isomorphism $\lambda':\bw2A_1 \isomto \bw2A_2^\vee$, so we can use it to identify the two spaces.
Then $\mu^2$ becomes an endomorphism of $\bw2A_1$. 
Condition~\eqref{assumption:pencil-regular} means that the characteristic polynomial of this endomorphism has three distinct roots, hence the endomorphism can be diagonalized in an appropriate basis. 
Choosing such a basis in $\bw2A_1 \cong A_1^\vee$, considering its dual basis in $A_1$, and transferring the basis to $\bw2A_2^\vee \cong A_2$ via $\lambda'$,
we obtain bases in $A_1$ and $A_2$ such that $\lambda'$ has the required form and the pairing given by $\mu^2$ between $\bw2A_1$ and $\bw2A_2$ is diagonal. 
Rescaling the bases we can also ensure that $\blam$ has the required form. Also it is clear that the bases defined in this way are unique up to rescaling and permuting.
%
\end{proof}


If the 2-form $\mu$ is nondegenerate, then the 4-form $\mu^2$, considered as a map $\BW^\vee \to \BW$, is inverse (up to a scalar factor) to~$\mu$, considered as a map $\BW \to \BW^\vee$.
Thus, $\mu$ can be reconstructed from $\mu^2$. A straightforward computation shows that 
%
$\mu$ is proportional to the skew-symmetric matrix with the following upper-triangular part
\begin{equation}\label{eq:mu-matrix}
\mu = 
\left(\begin{array}{rrr|rrr}
0 & M_4K_3 & -M_5K_2 & 	M_1M_4 & M_1M_5 & K_2K_3 + M_1M_6 \\
& 0 & M_6K_1 &		M_2M_4 & K_1K_3 + M_2M_5 & M_2M_6 \\
&& 0 &			K_1K_2 + M_3M_4 & M_3M_5 & M_3M_6 \\
\hline
&&& 0 & M_1K_1 & -M_2K_2 \\
&&&& 0 & M_3K_3 \\
&&&&& 0
\end{array}\right)
\end{equation}

\underline{Assumption 3:}
We assume that all the coefficients of the matrix $\mu^2$ in~\eqref{eq:lambda-mu-explicit} are nonzero:
%
\begin{equation}\label{assumption:m-nonzero}
\text{$M_i \ne 0$ for all $1 \le i \le 6$ and $K_i \ne 0$ for all $1 \le i \le 3$.}
\end{equation} 

We also have some nonvanishings for free.

\begin{lemma}
If the skew form $\mu$ is nondegenerate then 
\begin{equation}\label{eq:mmk-nonzero}
M_1M_6K_1 + M_2M_5K_2 + M_3M_4K_3 + K_1K_2K_3 \ne 0.
\end{equation}
If, moreover, \eqref{assumption:m-nonzero} holds, then the pairing between $A_1$ and $A_2$ defined by $\mu$ is nondegenerate.
\end{lemma}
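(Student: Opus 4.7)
The plan is to prove both parts by a direct computation from the explicit formulas~\eqref{eq:lambda-mu-explicit} for $\mu^2$ and~\eqref{eq:mu-matrix} for $\mu$.

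For the first part, I would compute $\Pf(\mu)$ via the identity $\mu^3 = \mu \wedge \mu^2 = 6\Pf(\mu)\cdot x_{123456}$, which holds for any $2$-form on a $6$-dimensional space. To evaluate $\mu \wedge \mu^2$, note that each of the nine terms $x_{klmn}$ appearing in $\mu^2$ pairs nontrivially with exactly one term $x_{ij}$ of $\mu$, namely the one indexed by the complement $\{i,j\} = \{1,\dots,6\} \setminus \{k,l,m,n\}$. For each such pair I determine the sign of $x_{ij}\wedge x_{klmn}$ by counting transpositions and multiply by the relevant product of coefficients read off from~\eqref{eq:mu-matrix} and~\eqref{eq:lambda-mu-explicit}. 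Summing the nine contributions collapses to
\[
\mu^3 = 3\bigl(M_1M_6K_1 + M_2M_5K_2 + M_3M_4K_3 + K_1K_2K_3\bigr)\cdot x_{123456},
\]
so $\Pf(\mu)$ is a nonzero scalar multiple of the expression in~\eqref{eq:mmk-nonzero}. Since nondegeneracy of a skew form is equivalent to non-vanishing of its Pfaffian, the hypothesis on $\mu$ forces~\eqref{eq:mmk-nonzero}.

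For the second part, the pairing $A_1 \times A_2 \to \k$ induced by $\mu$ is represented, in the bases of Lemma~\ref{lemma:bases-a1-a2}, by the top-right $3\times 3$ off-diagonal block $P$ of~\eqref{eq:mu-matrix}. A cofactor expansion of $\det(P)$ along the first row shows that the pure-$M$ monomials cancel against one another, and one is left with
\[
\det(P) = -K_1K_2K_3\bigl(M_1M_6K_1 + M_2M_5K_2 + M_3M_4K_3 + K_1K_2K_3\bigr).
\]
Under~\eqref{assumption:m-nonzero} the factor $K_1K_2K_3$ is nonzero, while the second factor is nonzero by the first part of the lemma; hence $\det(P) \ne 0$ and the pairing is nondegenerate.

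The main difficulty is not conceptual but combinatorial: the accurate bookkeeping of signs in the nine surviving contributions to the $6$-form $\mu \wedge \mu^2$, and the cancellations of mixed $M$-monomials in the cofactor expansion of $\det(P)$. Both steps reduce to short explicit calculations, and it is reassuring that the same cubic expression $M_1M_6K_1 + M_2M_5K_2 + M_3M_4K_3 + K_1K_2K_3$ controls both the Pfaffian of $\mu$ and the determinant of the $A_1$--$A_2$ pairing, in agreement with the general principle that $\mu^2$ is proportional to $\mu^{-1}$ under the volume-form duality.
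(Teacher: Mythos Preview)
Your argument is correct and follows the same overall strategy as the paper: compute the Pfaffian of $\mu$ and the determinant of the off-diagonal $3\times 3$ block, then read off the two claims. The determinant computation in your second part is exactly the paper's.

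For the first part there is a small but genuine difference worth noting. The paper computes the Pfaffian of the displayed matrix~\eqref{eq:mu-matrix} directly from its entries (a $15$-term expansion) and finds
\[
\Pf(\tilde\mu)=\bigl(M_1M_6K_1+M_2M_5K_2+M_3M_4K_3+K_1K_2K_3\bigr)^{2},
\]
whereas you exploit that $\mu^2$ is already given explicitly in~\eqref{eq:lambda-mu-explicit} and compute the $9$-term wedge $\tilde\mu\wedge\mu^2$ instead. The two are consistent: the matrix in~\eqref{eq:mu-matrix} is only \emph{proportional} to $\mu$, say $\mu=c\,\tilde\mu$, and one checks (e.g.\ on the $x_{1234}$-coefficient) that $\tilde\mu\wedge\tilde\mu=2\bigl(M_1M_6K_1+\dots+K_1K_2K_3\bigr)\cdot\mu^2$, so $\Pf(\tilde\mu)$ is the square while your $\tilde\mu\wedge\mu^2$ is linear in the same expression. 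Your shortcut is a bit more economical (nine products with easy complementary-index signs rather than fifteen Pfaffian terms with quadratic entries), and your phrasing ``a nonzero scalar multiple'' correctly absorbs the undetermined proportionality constant $c$. You should just be careful not to write the displayed equality $\mu^3=3(\dots)x_{123456}$ as a literal identity; it holds with $\tilde\mu$ in place of $\mu$, which is all you need.
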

\begin{proof}
A straightforward computation shows that the Pfaffian of the matrix~\eqref{eq:mu-matrix} is equal to
\begin{equation*}
\Pf(\mu) = (M_1M_6K_1 + M_2M_5K_2 + M_3M_4K_3 + K_1K_2K_3 )^2.
\end{equation*}
In particular, the nondegeneracy of $\mu$ implies~\eqref{eq:mmk-nonzero}.
Furthermore, a computation of the determinant of the upper-right 3-by-3 block $\mu_{A_1,A_2}$ in~\eqref{eq:mu-matrix} gives
\begin{equation*}
\det(\mu_{A_1,A_2}) = -K_1K_2K_3(M_1M_6K_1 + M_2M_5K_2 + M_3M_4K_3 + K_1K_2K_3),
\end{equation*}
so assuming that all $K_i$ are nonzero, we deduce that the pairing is nondegenerate.
\end{proof}

%

Later we will need the following consequence of assumption~\eqref{assumption:m-nonzero}.

\begin{lemma}\label{lemma:no21cases}
Assume $\lambda'$ and $\mu$ are given by~\eqref{eq:lambda-mu-explicit} and~\eqref{eq:mu-matrix} with $K_i$ pairwise distinct, and assume that~\eqref{assumption:m-nonzero} holds.

\noindent$(a)$
If for a two-dimensional subspace $U_{2,A_1} \subset A_1$ and a one-dimensional subspace $U_{1,A_2} \subset A_2$ the subspace $U_{2,A_1} \oplus U_{1,A_2} \subset \BW$ is $\mu$-isotropic, then $\lambda'$ does not annihilate it.

\noindent$(b)$
If for a one-dimensional subspace $U_{1,A_1} \subset A_1$ and a two-dimensional subspace $U_{2,A_2} \subset A_2$ the subspace $U_{1,A_1} \oplus U_{2,A_2} \subset \BW$ is $\mu$-isotropic, then $\lambda'$ does not annihilate it.
%
\end{lemma}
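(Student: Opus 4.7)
The plan is to exploit the identity $\mu^2\conv(\bw3U) = 0$, which holds for any $\mu$-isotropic 3-subspace $U \subset \BW$: expanding
\begin{equation*}
(\mu\wedge\mu)(u,v,w,z) = \mu(u,v)\mu(w,z) - \mu(u,w)\mu(v,z) + \mu(v,w)\mu(u,z),
\end{equation*}
all three leading $\mu$-values vanish when $\mu\vert_U = 0$. The decomposition $\BW^\vee = A_1^\vee \oplus A_2^\vee$ splits this vanishing into independent $A_1^\vee$- and $A_2^\vee$-systems, and combining them with the $\lambda'$-annihilation hypothesis will force one of the $M_i$ to be zero.

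For part $(a)$, write $U_{2,A_1}$ via its Pl\"ucker coordinates $\omega = p_{12}e_{12} + p_{13}e_{13} + p_{23}e_{23}$, with defining linear form $f := p_{23}x_1 - p_{13}x_2 + p_{12}x_3$, and $U_{1,A_2}$ by $w = \gamma_1 e_4 + \gamma_2 e_5 + \gamma_3 e_6$, so that $\bw3U = \omega\wedge w$. A direct contraction of $\mu^2$ as given by~\eqref{eq:lambda-mu-explicit} with $\omega\wedge w$ splits into two pieces: the $A_1^\vee$-piece comes entirely from the type $(3,1)$ terms $M_4 x_{1234} + M_5 x_{1235} + M_6 x_{1236}$ and equals $\pm(M_4\gamma_1 + M_5\gamma_2 + M_6\gamma_3)\cdot f$; the $A_2^\vee$-piece comes from the type $(2,2)$ terms $K_1 x_{2345} + K_2 x_{1346} + K_3 x_{1256}$ and, after the standard $\iota$-contractions, produces the three equations
\begin{equation*}
K_1 p_{23}\gamma_2 + K_2 p_{13}\gamma_3 = 0,\qquad K_1 p_{23}\gamma_1 - K_3 p_{12}\gamma_3 = 0,\qquad K_2 p_{13}\gamma_1 + K_3 p_{12}\gamma_2 = 0.
\end{equation*}
Since $f\ne 0$, vanishing of the $A_1^\vee$-piece forces $N := M_4\gamma_1 + M_5\gamma_2 + M_6\gamma_3 = 0$.

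Next, $\lambda'$ defines a nondegenerate pairing $\bw2 A_1 \otimes \bw2 A_2 \to \k$ which in the bases chosen in Lemma~\ref{lemma:bases-a1-a2} is anti-diagonal, and a short computation shows that $\lambda'\conv(\omega\wedge w) = 0$ iff $w$ lies in the kernel of $\lambda'(\omega)\in\bw2 A_2^\vee$, spanned by $p_{12}e_4 - p_{13}e_5 + p_{23}e_6$. Thus $(\gamma_1,\gamma_2,\gamma_3) = c(p_{12}, -p_{13}, p_{23})$ for some $c\ne 0$, and substitution into the three $A_2^\vee$-equations reduces them to $c\,p_{13}p_{23}(K_2-K_1) = 0$, $c\,p_{12}p_{23}(K_1-K_3) = 0$, $c\,p_{12}p_{13}(K_2-K_3) = 0$. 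Because the $K_i$ are pairwise distinct, this forces $p_{12}p_{13} = p_{12}p_{23} = p_{13}p_{23} = 0$, so at most one of $p_{12},p_{13},p_{23}$ is nonzero; in each of the three resulting cases exactly one $\gamma_k$ survives, and $N = 0$ collapses to $M_4=0$, $M_5=0$, or $M_6=0$, contradicting~\eqref{assumption:m-nonzero}. Part $(b)$ is entirely parallel with the roles of $A_1$ and $A_2$ swapped: now the type $(1,3)$ component $M_1 x_{1456} + M_2 x_{2456} + M_3 x_{3456}$ of $\mu^2$ supplies the $A_2^\vee$-obstruction $M_1\alpha_1 + M_2\alpha_2 + M_3\alpha_3 = 0$ (with $w = \sum\alpha_i e_i \in A_1$), and the $(2,2)$ component supplies the $A_1^\vee$-system; the same distinctness-of-$K_i$ argument then forces one of $M_1,M_2,M_3$ to vanish. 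The only delicate point is sign-tracking in the $\iota$-contractions, but the clean splitting into $(A_1^\vee, A_2^\vee)$-parts makes the dependence on the $M_i$ and on the differences $K_i - K_j$ completely transparent, so no genuine obstacle arises.
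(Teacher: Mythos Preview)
Your proof is correct and follows essentially the same route as the paper's: both start from the observation that $\mu$-isotropy of a 3-space forces $\mu^2$-annihilation, then compare the $A_2^\vee$-components of $\lambda'\conv(\omega\wedge w)$ and $\mu^2\conv(\omega\wedge w)$ to reduce to the case where $\omega$ is one of $e_{12},e_{13},e_{23}$ (equivalently, a single $p_{ij}$ survives).

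The only difference is in the final contradiction. You use the $A_1^\vee$-component of $\mu^2\conv(\omega\wedge w)=0$, which in the surviving case collapses to $M_i\gamma_k=0$ and contradicts~\eqref{assumption:m-nonzero} directly. The paper instead goes back to the \emph{original} $\mu$-isotropy of $U_{2,A_1}$, computing the scalar $\mu\conv\omega$ (e.g.\ $\mu\conv e_{12}=M_4K_3$ from~\eqref{eq:mu-matrix}) and observing it is nonzero. Your version is slightly more uniform---everything is read off from the two 4-form contractions---while the paper's is marginally shorter since it avoids tracking the $A_1^\vee$-piece altogether. Both extract the same contradiction from the same hypotheses.
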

\begin{proof}
Let $\alpha_1 = s_1e_{23} - s_2e_{13} + s_3e_{12}$ be the bivector corresponding to the subspace $U_{2,A_1}$ and $a_2 = s_4e_4 + s_5e_5  +s_6e_6$ be the vector corresponding to $U_{1,A_2}$. 
Assume that the subspace $U_{2,A_1} \oplus U_{1,A_2}$ is annihilated by $\lambda'$ and is $\mu$-isotropic. 
It is easy to see that then it is also annihilated by $\mu^2$ (see the proof of Lemma~\ref{lemma:lambda-mu-shift} below). 
Then
\begin{align*}
\lambda' \conv (\alpha_1 \wedge a_2) & = (s_1s_5 - s_2s_6)x_4 + (-s_1s_4 + s_3s_6)x_5 + (s_2s_4 - s_3s_5)x_6 && = 0,\\
\mu^2 \conv (\alpha_1 \wedge a_2) & = (K_1s_1s_5 - K_2s_2s_6)x_4 + (-K_1s_1s_4 + K_3s_3s_6)x_5 + (K_2s_2s_4 - K_3s_3s_5)x_6 && = 0.
\end{align*}
Assume $s_4 \ne 0$. 
Subtracting the second line from the first line multiplied by $K_3$ and considering the coefficients at $x_5$ and~$x_6$ gives $(K_1-K_3)s_1s_4 = (K_2 - K_3)s_2s_4 = 0$.
Since $K_i$ are pairwise distinct, it follows that $s_1 = s_2 = 0$, hence $\alpha_1 = s_3e_{12}$ and $\mu \conv \alpha_1 = M_4K_3s_3 \ne 0$.
Analogously, if $s_5 \ne 0$ we conclude that $\alpha_1 = -s_2e_{13}$ and $\mu \conv \alpha_1 = M_5K_2s_2 \ne 0$, and if $s_6 \ne 0$ we conclude that $\alpha_1 = s_1e_{23}$ and $\mu \conv \alpha_1 = M_6K_1s_1 \ne 0$.
The second statement is proved similarly.
\end{proof}

\underline{Assumption 4:}
Consider the Lagrangian Grassmannian $\LGr_\mu(3,\BW)$
corresponding to the symplectic form $\mu$ on $\BW$. By definition
$\LGr_\mu(3,\BW) \subset \Gr(3,\BW) \subset \P(\bw3\BW)$,
hence the 3-form $\blam \in \bw3\BW^\vee$ gives its hyperplane section. We assume
\begin{equation}\label{assumption:lgr-hyperplane}
\text{the hyperplane section $\LGr_\mu(3,\BW)_\blam$ of $\LGr_\mu(3,\BW)$ given by $\blam$ is smooth.}
\end{equation}
In fact, it is possible that~\eqref{assumption:lgr-hyperplane} follows from the other assumptions we have made, but this seems to be not so easy to prove.

Now we want to summarize the results of this section.

\begin{proposition}
For a triple $(\lambda',\blam,\mu)$ consisting of a $4$-form, a $3$-form, and a $2$-form on $\BW$ such that assumptions~\eqref{assumption:lambda-mu-general} and~\eqref{assumption:pencil-regular} hold,
there is a basis in $\BW$ such that the forms are given by~\eqref{eq:lambda-mu-explicit} and~\eqref{eq:mu-matrix} with $K_i$ pairwise distinct and~\eqref{eq:mmk-nonzero} satisfied. 

Conversely, if the forms are given by~\eqref{eq:lambda-mu-explicit} and~\eqref{eq:mu-matrix} with pairwise distinct $K_i$ and~\eqref{eq:mmk-nonzero} holds, 
then assumptions~\eqref{assumption:lambda-mu-general} and~\eqref{assumption:pencil-regular} hold.
If, moreover, \eqref{assumption:m-nonzero} holds, then the pairing between the subspaces $A_1$ and $A_2$ defined by $\mu$ is nondegenerate and the assertion of Lemma~{\rm\ref{lemma:no21cases}} holds.

Finally, for general $M_i$ and $K_i$ assumption~\eqref{assumption:lgr-hyperplane} holds.
%
%
%
\end{proposition}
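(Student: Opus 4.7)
The proposition collects several earlier results into a single summary, so the plan is to walk through each clause and invoke the relevant lemma or computation rather than to prove anything new. For the forward direction, assuming~\eqref{assumption:lambda-mu-general} and~\eqref{assumption:pencil-regular}, I would first appeal to Lemma~\ref{lemma:la-w-gen} to know that $\blam$ is a general $3$-form splitting $\BW = A_1 \oplus A_2$, and that $\lambda'$ annihilates each $A_i$ while inducing a nondegenerate pairing between $\bw2 A_1$ and $\bw2 A_2$. Assumption~\eqref{assumption:pencil-regular} then allows Lemma~\ref{lemma:bases-a1-a2} to produce bases in $A_1, A_2$ putting $\blam, \lambda', \mu^2$ into the explicit shape~\eqref{eq:lambda-mu-explicit} with the $K_i$ pairwise distinct. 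The matrix~\eqref{eq:mu-matrix} for $\mu$ follows from the observation that $\mu^2 \colon \BW^\vee \to \BW$ is (up to scalar) inverse to $\mu \colon \BW \to \BW^\vee$, so $\mu$ is recovered from $\mu^2$ by a direct matrix inversion; and~\eqref{eq:mmk-nonzero} is forced by the nondegeneracy of $\mu$ via the Pfaffian identity stated in the lemma immediately above the proposition.

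For the converse, given the explicit forms with pairwise distinct $K_i$ and~\eqref{eq:mmk-nonzero}, I would set $w_0 = e_0$, $w_0^\vee = x_0$, and $\lambda = x_0 \wedge \blam + \lambda'$. Direct substitution yields $\lambda = x_{0123}+x_{0456}+x_{1256}+x_{1346}+x_{2345}$, which by Lemma~\ref{lemma:4form-general}(iv) is general; then $\bQ_\lambda$ is~\eqref{eq:qlambda-explicit} and $\bq_\lambda^{-1}(e_0,e_0) = 1 \neq 0$, so $w_0 \notin \bQ_\lambda$. The Pfaffian formula and~\eqref{eq:mmk-nonzero} give $\rank(\mu) = 6$, completing~\eqref{assumption:lambda-mu-general}. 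For~\eqref{assumption:pencil-regular}, I would use $\lambda'$ to identify $\bw2 A_1$ with $\bw2 A_2^\vee$ and observe that in the induced bases $\mu^2$ acts diagonally with entries $K_1, K_2, K_3$, so $\chi_{A_1,A_2}^{\lambda',\mu^2}$ has three distinct roots. Under the additional hypothesis~\eqref{assumption:m-nonzero}, the determinant $\det(\mu_{A_1,A_2})$ computed just above the statement is nonzero, so the $A_1$--$A_2$ pairing is nondegenerate, and the hypotheses of Lemma~\ref{lemma:no21cases} hold verbatim.

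For the smoothness clause~\eqref{assumption:lgr-hyperplane}, I would apply Bertini's theorem to the very ample linear system $|\cO(1)|$ on the smooth projective variety $\LGr_\mu(3,\BW)$: the locus of hyperplanes $H \subset \P(\bw3\BW)$ cutting out a singular section is a proper Zariski-closed subset of $\P(\bw3\BW^\vee)$. Pulling this closed condition back along the map $(M_i, K_i) \mapsto (\mu, \blam)$ from the parameter space gives a closed subset of $(M_i, K_i)$-space, and the claim asserts that its complement is nonempty. The main obstacle --- and the only real content of this clause --- is checking this nonemptyness, since a priori the $9$-parameter family of pairs $(\mu, \blam)$ could conceivably land entirely inside the ``singular'' locus. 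I would settle this either by a direct Jacobian computation at a convenient specialization (for instance all $M_i = 1$ together with a specific triple of distinct $K_i$), or, more abstractly, by transporting the family into an $\Sp(\BW,\mu)$-orbit of $3$-forms $\blam$ and arguing that this orbit is not contained in the proper closed locus of ``bad'' forms; either approach, combined with openness, propagates smoothness to a Zariski-dense subset of the parameter space.
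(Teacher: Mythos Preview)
Your proposal is correct and follows the same route as the paper, which treats this proposition as a summary of the preceding lemmas and gives only the one-line proof ``This is clear since all the assumptions are open conditions.'' Your explicit citations for each clause and your attention to the nonemptyness issue for~\eqref{assumption:lgr-hyperplane} are in fact more thorough than what the paper provides.
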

\begin{proof}
This is clear since all the assumptions are open conditions. 
\end{proof}

\begin{remark}
The standard presentation~\eqref{eq:lambda-mu-explicit}, \eqref{eq:mu-matrix} of the data $(\lambda,\mu)$ allows to compute easily the number of parameters we have.
Indeed, we have a precise form of $\lambda$, and 9 parameters in $\mu$ (6 parameters $M_i$ and 3 parameters $K_i$).
On the other hand, we can rescale the form $\mu$, and also rescale and permute consistently the bases of $A_1$ and $A_2$ (i.e.\ act by the normalizer of a torus in~$\SL(A_1)$). 
This allows to kill three parameters. 
Moreover, as we will see soon (Lemma~\ref{lemma:lambda-mu-shift}), the variety $\cX_{\lambda,\mu}$ does not change if we replace the pair $(\lambda,\mu)$ by the pair $(\lambda - t\mu^2,\mu)$ for any $t \in \k$.
This allows to kill one more parameter (in fact, by using this we can assume $K_1 + K_2 + K_3 = 0$). 
So, altogether, the moduli space of varieties $\cX_{\lambda,\mu}$ is 5-dimensional.

This feature makes the situation with K\"uchle fivefolds of type $(c5)$ rather different from other examples of Fano fivefolds ($\P^5$, $\Gr(2,5) \cap H$, $(\P^1)^5$, $\Bl_{v_2(\P^2)}\P^5$), 
whose half-anticanonical section has a noncommutative K3 surface 
\end{remark}

%
%



\section{A description of K\"uchle fivefolds}\label{section:5folds}

\subsection{The main Theorem}\label{subsection:main-theorem}

Recall the definition of the K\"uchle fivefold of type $(c5)$ from the introduction. We start with a 7-dimensional vector space $W$, a 2-form $\mu \in \bw2W^\vee$ and a 4-form $\lambda \in \bw4W^\vee$. 
We consider the Grassmannian $\Gr(3,W)$ of 3-dimensional subspaces in $W$ with its tautological subbundles $\cU_3 \subset W \otimes \cO$ and $\cU_3^\perp \subset W^\vee \otimes \cO$. 
By Bott Theorem $\mu$ can be considered as a global section of the vector bundle $\cU_3(1) \cong \bw2\cU_3^\vee$ and $\lambda$ can be considered as a global section of the vector bundle $\cU_3^\perp(1) \cong \bw3(W/\cU_3)$. 
Then $\cX_{\lambda,\mu} \subset \Gr(2,W)$ is defined as the zero locus of $(\mu,\lambda)$. 
In other words, it is the subvariety of $\Gr(3,W)$ parameterizing all 3-subspaces $U_3 \subset W$ annihilated by $\lambda$ and isotropic for $\mu$.

\begin{lemma}\label{lemma:lambda-mu-shift}
For a general choice of $\lambda$ and $\mu$ the variety $\cX_{\lambda,\mu}$ is a smooth Fano fivefold of index~$2$.
Moreover, for any $t \in \k$ we have $\cX_{\lambda - t\mu^2,\mu} = \cX_{\lambda,\mu}$.
\end{lemma}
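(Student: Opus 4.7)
The plan is to handle the two assertions separately: a standard Bertini-plus-adjunction computation for smoothness, dimension, and Fano index, followed by a three-line bilinear expansion of $\mu\wedge\mu$ for the invariance under the shift.

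\medskip

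For the first assertion, I would begin by noting that both summands of the vector bundle $\cE := \cU_3^\perp(1)\oplus\cU_3(1) \cong \bw3(W/\cU_3)\oplus\bw2\cU_3^\vee$ on $\Gr(3,W)$ are globally generated: the surjection $W\otimes\cO\thra W/\cU_3$ induces a surjection $\bw3W\otimes\cO\thra\bw3(W/\cU_3)$, and dually $\bw2W^\vee\otimes\cO\thra\bw2\cU_3^\vee$. A general global section of a globally generated bundle cuts out a smooth subscheme of codimension equal to the rank, so for general $(\lambda,\mu)$ the zero locus $\cX_{\lambda,\mu}$ is smooth of dimension $\dim\Gr(3,W)-\rk(\cE)=12-7=5$.

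\medskip

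For the canonical class I would apply adjunction. Using $\det\cU_3=\cO(-1)$ and $\det(W/\cU_3)=\cO(1)$, I compute
\begin{equation*}
\det\bigl(\cU_3^\perp(1)\bigr)=\cO(-1)\otimes\cO(4)=\cO(3),
\qquad
\det\bigl(\cU_3(1)\bigr)=\cO(-1)\otimes\cO(3)=\cO(2),
\end{equation*}
hence $\det\cE=\cO(5)$. Since $\omega_{\Gr(3,W)}=\cO(-7)$, adjunction gives $\omega_{\cX_{\lambda,\mu}}=\cO(-2)\vert_{\cX_{\lambda,\mu}}$, so $\cX_{\lambda,\mu}$ is Fano of index $2$ (and the restriction of the Pl\"ucker class is ample on any subvariety of the Grassmannian).

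\medskip

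For the invariance statement I would show that whenever a $3$-subspace $U_3\subset W$ is isotropic for $\mu$, the $4$-form $\mu^2$ automatically annihilates $U_3$; equality of the two zero loci then follows tautologically. For $u_1,u_2,u_3\in U_3$ and any $v\in W$, the expansion of the wedge product yields
\begin{equation*}
(\mu\wedge\mu)(u_1,u_2,u_3,v)
=\mu(u_1,u_2)\mu(u_3,v)-\mu(u_1,u_3)\mu(u_2,v)+\mu(u_2,u_3)\mu(u_1,v),
\end{equation*}
and the hypothesis $\mu\vert_{U_3\times U_3}=0$ kills every term. Hence $\mu^2\conv(u_1\wedge u_2\wedge u_3)=0$, so $\lambda\conv(\bw3U_3)=0$ if and only if $(\lambda-t\mu^2)\conv(\bw3U_3)=0$, proving $\cX_{\lambda-t\mu^2,\mu}=\cX_{\lambda,\mu}$.

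\medskip

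No step here is a serious obstacle: the first two paragraphs are textbook applications of Bertini and adjunction on a Grassmannian, and the last paragraph is a three-term identity. The only place one has to be mildly careful is in recording the global generation of $\cU_3^\perp(1)$ and in the normalization of the canonical bundle of $\Gr(3,W)$, but these are routine.
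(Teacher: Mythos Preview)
Your proposal is correct and follows essentially the same route as the paper: Bertini for smoothness, the identical adjunction computation $\cO(-7)\otimes\cO(2)\otimes\cO(3)=\cO(-2)$, and the same three-term expansion of $\mu\wedge\mu$ to show that $\mu$-isotropic subspaces are annihilated by $\mu^2$. The paper phrases the last identity as $\mu^2\conv(u_1\wedge u_2\wedge u_3)=\sum \pm\mu(u_i,u_j)(\mu\conv u_k)$ rather than evaluating on a fourth vector $v$, but this is the same computation.
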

\begin{proof}
The smoothness of general $\cX_{\lambda,\mu}$ follows from Bertini Theorem since the vector bundles $\cU_3(1)$ and $\cU_3^\perp(1)$ are globally generated.
The dimension of $\cX_\lm$ is 
\begin{equation*}
\dim \Gr(3,W) - \rank(\cU_3(1)) - \rank(\cU_3^\perp(1)) = 3\cdot 4 - 3 - 4 = 5,
\end{equation*}
and by adjunction the canonical class of $\cX_\lm$ is given by
\begin{equation*}
\omega_{\cX_\lm} \cong 
(\omega_{\Gr(3,W)} \otimes \det(\cU_3(1)) \otimes \det(\cU_3^\perp(1)))\vert_{\cX_\lm} \cong
(\cO(-7) \otimes \cO(2) \otimes \cO(3))\vert_{\cX_\lm} \cong
\cO_{\cX_\lm}(-2).
\end{equation*}
Thus $\cX_\lm$ is a Fano fivefold of index 2.
If $u_1,u_2,u_3 \in U_3$ then 
\begin{equation*}
\mu^2 \conv (u_1\wedge u_2 \wedge u_3) = 
\mu(u_1,u_2) (\mu \conv u_3) - \mu(u_1,u_3) (\mu \conv u_2) + \mu(u_2,u_3) (\mu \conv u_1).
\end{equation*}
Therefore, if $U_3$ is $\mu$-isotropic, it is annihilated by $\mu^2$.
This means that $\cX_{\lambda,\mu} = \cX_{\lambda - t\mu^2,\mu}$.
\end{proof}

Now we can state the main result of the paper. 

\begin{theorem}\label{theorem:main}
Assume the pair $(\lambda,\mu)$ satisfies assumptions~\eqref{assumption:lambda-mu-general}, \eqref{assumption:pencil-regular}, \eqref{assumption:m-nonzero}, and~\eqref{assumption:lgr-hyperplane}.
Then the K\"uchle fivefold $\cX_{\lambda,\mu}$ is smooth, and there is a smooth projective variety $\tcX_{\lambda,\mu}$ and a diagram
\begin{equation}\label{diagram:fivefolds}
\vcenter{\xymatrix{
& E \ar[dl]_p \ar[r]^i & \tcX_{\lambda,\mu} \ar[dl]_\pi \ar[dr]^\bpi & \BE \ar[l]_\bi \ar[dr]^\bp \\
F \ar[r] & \cX_{\lambda,\mu} && \bcX_{\lambda,\mu}  & Z \ar[l]
}}
\end{equation}
where
\begin{itemize}
\item 
$\bcX_{\lambda,\mu} = \LGr_\mu(3,\BW)_{\blam}$ is a hyperplane section of the Lagrangian Grassmannian;
\item 
$F \cong \Fl(1,2;A_1) \cong \Fl(1,2;A_2)$ is the flag variety;
\item 
$Z$ is a smooth scroll over a del Pezzo surface of degree $6$;
\item 
the maps $\pi$ and $\bpi$ are blowups with centers in $F$ and $Z$;
\item $E$ and $\BE$ are the exceptional divisors of the blowups; and
\item the maps $i$ and $\bi$ are the embeddings of the exceptional divisors.
\end{itemize}
\end{theorem}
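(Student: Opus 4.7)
The plan is to realize $\tcX_{\lambda,\mu}$ simultaneously as a zero locus inside a projective bundle over $\cX_{\lambda,\mu}$ and over $\bcX_{\lambda,\mu}$, so that two applications of Lemma~\ref{lemma:blowup} exhibit $\pi$ and $\bpi$ as the stated blowups. Throughout I would exploit the decomposition $W = \k w_0 \oplus \BW$ and $\lambda = w_0^\vee \wedge \blam + \lambda'$ from Lemma~\ref{lemma:la-w-gen}, together with the fact that $\ker\mu = \k w_0$.

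For $\pi$, on $\cX_{\lambda,\mu}$ consider the bundle map $\varphi_\mu \colon W/\cU_3 \to \cU_3^\vee$ of ranks $4$ and $3$ given by $[v]\mapsto \mu(v,\cdot)|_{\cU_3}$. Since $\ker\mu=\k w_0$, its first degeneracy locus $D_1(\varphi_\mu)$ equals $\{U_3\in\cX_{\lambda,\mu}:w_0\in U_3\}$, and writing $U_3=\k w_0\oplus\BU_2$ the defining conditions of $\cX_{\lambda,\mu}$ reduce to $\mu|_{\BU_2}=0$ together with $\blam\conv\bw2\BU_2=0$ in $\BW^\vee$; by Lemma~\ref{lemma:gr26-3form-2form} this identifies $D_1(\varphi_\mu)$ with $F\cong\Fl(1,2;A_1)\cong\Fl(1,2;A_2)$, of dimension $3$ and codimension $2$. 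After verifying $D_2(\varphi_\mu)=\emptyset$ via the standard presentation of Lemma~\ref{lemma:bases-a1-a2} (through Lemma~\ref{lemma:no21cases}, which is where assumption~\eqref{assumption:m-nonzero} enters), I define $\tcX_{\lambda,\mu}$ as the zero locus in $\P_{\cX_{\lambda,\mu}}(W/\cU_3)$ of the section of $p^*\cU_3^\vee\otimes\cO(1)$ induced by $\varphi_\mu$. Lemma~\ref{lemma:blowup} then realizes $\pi$ as the blowup along $F$, with exceptional divisor $E\cong\P_F(\ker\varphi_\mu|_F)\cong\P_F(\BU_2^{\perp_\mu}/\BU_2)$, a $\P^1$-bundle.

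For $\bpi$, a point $(U_3,[v])\in\tcX_{\lambda,\mu}$ corresponds to $v\in U_3^{\perp_\mu}\setminus U_3$; the $4$-dimensional subspace $V:=U_3+\k v$ is $\mu$-isotropic, and corank-$1$ of $\mu$ forces any maximal such subspace to contain $\k w_0$, so $\BU_3:=V\cap\BW$ is a well-defined $3$-subspace of $\BW$. A direct computation in the decomposition $\lambda=w_0^\vee\wedge\blam+\lambda'$ shows that the $w_0^\vee$-coefficient of $\lambda\conv\bw3 U_3$ is (up to sign) $\blam|_{\BU_3}$, so the $\lambda$-annihilation of $U_3$ combined with the isotropy of $V$ forces $\BU_3\in\bcX_{\lambda,\mu}$, producing the morphism $\bpi$. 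To realize $\bpi$ as a blowup I would apply Lemma~\ref{lemma:blowup} from the other side: on $\bcX_{\lambda,\mu}$ the rank-$4$ subbundle $\cV:=\k w_0\otimes\cO\oplus\bcU_3\subset W\otimes\cO$ satisfies $\lambda|_\cV=0$ (since $\blam|_{\bcU_3}=0$ on $\bcX_{\lambda,\mu}$ and $\lambda'$ vanishes on any $4$-tuple involving $w_0$), so the contraction $\lambda\conv$ factors as a bundle map $\varphi_\lambda\colon\bw3\cV\to\cV^\perp$ between bundles of ranks $4$ and $3$. Under the canonical identification $\Gr_{\bcX_{\lambda,\mu}}(3,\cV)\cong\P_{\bcX_{\lambda,\mu}}(\bw3\cV)$, the zero locus of the induced section coincides with $\tcX_{\lambda,\mu}$, since both describe the pairs $(U_3,\BU_3)$ with $U_3\in\cX_{\lambda,\mu}$, $\BU_3\in\bcX_{\lambda,\mu}$, and $U_3\subset\k w_0+\BU_3$. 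Lemma~\ref{lemma:blowup} now exhibits $\bpi$ as the blowup along $Z:=D_1(\varphi_\lambda)$.

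To identify $Z$ I would decompose $\bw3\cV=\bw3\bcU_3\oplus(w_0\wedge\bw2\bcU_3)$ and observe that the rank of $\varphi_\lambda$ is governed principally by the auxiliary rank-$3$-to-$3$ map $\blam\conv\colon\bw2\bcU_3\to\bcU_3^\perp$. Any nonzero kernel element is automatically decomposable since $\Gr(2,3)=\P^2$, so $\bar\eta=b_1\wedge b_2$ with $\BU_2:=\langle b_1,b_2\rangle\subset\BU_3$ satisfying $\blam\conv\bw2\BU_2=0$ in $\BW^\vee$, i.e.\ $\BU_2\in F$. Using the standard presentation from Lemma~\ref{lemma:bases-a1-a2} and the nonvanishings of assumption~\eqref{assumption:m-nonzero}, I would compute the $3\times 4$ matrix of $\varphi_\lambda$ explicitly in local coordinates to verify the codimension bounds $\codim D_k\ge k+1$ required by Lemma~\ref{lemma:blowup} and to exhibit the scroll structure $Z\to S$ over a sextic del Pezzo surface $S$ with $\P^1$-fibers. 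Smoothness of $\tcX_{\lambda,\mu}$ then follows from either blowup description combined with smoothness of $\cX_{\lambda,\mu}$ (Lemma~\ref{lemma:lambda-mu-shift}), of $F$ (homogeneous), and of $\bcX_{\lambda,\mu}$ (assumption~\eqref{assumption:lgr-hyperplane}). The main technical obstacle is this explicit degeneracy analysis of $\varphi_\lambda$: both the verification of the codimension bounds for the deeper degeneracy loci $D_{\ge 2}$, and more substantially the identification of the scroll base $S$ as a sextic del Pezzo surface.
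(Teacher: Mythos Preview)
Your overall architecture matches the paper's: realize $\tcX_{\lambda,\mu}$ inside the odd symplectic flag variety $\LFl_\mu(3,4;W)$, then apply the blowup Lemma~\ref{lemma:blowup} twice, once to the map induced by $\mu$ (giving $\pi$) and once to the map $\hlam\colon\bw3\cU_4\to\cU_4^\perp$ induced by $\lambda$ (giving $\bpi$). The setup on both sides is correct. Two points need fixing.

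\textbf{Order of smoothness.} You invoke Lemma~\ref{lemma:lambda-mu-shift} for smoothness of $\cX_{\lambda,\mu}$, but that lemma only gives smoothness for a \emph{general} pair, not for every pair satisfying the stated assumptions. More concretely, to apply Lemma~\ref{lemma:blowup} directly on $\cX_{\lambda,\mu}$ you need it to be Cohen--Macaulay, which you have not established. The paper avoids this by running the blowup lemma for $\varphi_\mu$ on the ambient smooth variety $\LGr_\mu(3,W)$ (Lemma~\ref{lemma:sfl}) and then restricting; smoothness of $\cX_{\lambda,\mu}$ is deduced only at the end, from smoothness of $\tcX_{\lambda,\mu}$ (which comes from the $\bpi$ side via assumption~\eqref{assumption:lgr-hyperplane} and smoothness of $Z$) together with transversality of $F = \cX_{\lambda,\mu}\cap w_0(\LGr_\mu(2,\BW))$. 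Also, $D_2(\varphi_\mu)=\emptyset$ follows immediately from $\operatorname{corank}\mu=1$; Lemma~\ref{lemma:no21cases} and assumption~\eqref{assumption:m-nonzero} play no role here.

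\textbf{The identification of $Z$.} This is the substantive gap. Your plan is to study the kernel of $\blam\colon\bw2\bcU_3\to\bcU_3^\perp$ and then ``compute the $3\times4$ matrix of $\varphi_\lambda$ explicitly in local coordinates''; but the degeneracy of $\hlam$ is not governed by the $\blam$-component alone (the $\lambda'$-component on $\bw3\bcU_3$ can restore full rank), and a bare coordinate computation gives no conceptual handle on why a sextic del Pezzo appears. The paper's key move is to parameterize the degeneracy not by a $2$-subspace of $\BU_3$ but by an \emph{overspace}: if $\hlam$ drops rank at $U_4=\k w_0\oplus\BU_3$, its image lies in some $U_5^\perp$ with $U_4\subset U_5$, and then $U_5$ is automatically $\lambda$-isotropic, i.e.\ $U_5\in\Gr_\lambda(5,W)$, the $\Gtwo$-adjoint variety (Proposition~\ref{proposition:gr-deg-loc}). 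Writing $U_5=\k w_0\oplus\BU_4$, the conditions become that $\BU_4\subset\BW$ is isotropic for $\blam$, $\lambda'$, and $\mu^2$. By Corollary~\ref{corollary:gr46-3form} the first condition cuts out $\Gr(2,A_1)\times\Gr(2,A_2)\cong\P^2\times\P^2$, and the remaining two are $(1,1)$-divisors whose transversality is exactly assumption~\eqref{assumption:pencil-regular}; this is where the sextic del Pezzo $S$ comes from. The $\P^1$-fibre over $\BU_4$ is then $\{\BU_3:\BU_4^{\perp\mu}\subset\BU_3\subset\BU_4\}$. Finally, one must still show $D_2(\hlam)=\emptyset$ on $\LGr_\mu(3,\BW)_\blam$: the paper does this by proving that $\hlam$ has constant rank $2$ on $Z$, and it is precisely in this step (showing $\lambda'$ surjects onto $\Coker(\blam)$ along two exceptional curves $C_1,C_2\subset Z$) that Lemma~\ref{lemma:no21cases} and assumption~\eqref{assumption:m-nonzero} are actually used.
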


The proof of the Theorem takes the rest of the section. More details about the structure of the diagram will come in the course of the proof.

Throughout the section we adopt generality assumptions~\eqref{assumption:lambda-mu-general}, \eqref{assumption:pencil-regular}, \eqref{assumption:m-nonzero}, and~\eqref{assumption:lgr-hyperplane}
and use the structural results from section~\ref{subsection:genericity-assumptions}.

\subsection{The odd and even symplectic Grassmannians}\label{subsection:odd-sgr}

We start by considering the zero locus of the section $\mu$ of the vector bundle $\cU_3(1) \cong \bw2\cU_3^\vee$ on $\Gr(3,W)$.
We denote this variety 
\begin{equation*}
\LGr_\mu(3,W) \subset \Gr(3,W).
\end{equation*}
It parameterizes $\mu$-isotropic subspaces of $W$ and, for $\mu$ of corank 1 it is smooth and is usually called an {\sf odd symplectic Grassmannian}. 
There is a standard way to relate this variety to a usual (even) symplectic Grassmannian.

Recall that $w_0 \in W$ denotes a generator for the one-dimensional kernel space $\Ker\mu$ of the form $\mu$, and we have a direct sum decomposition $W = \k w_0 \oplus \BW$ of~\eqref{eq:bw}.
Note that the restriction of the form $\mu$ to $\BW$ is non-degenerate.
Denote by 
\begin{equation*}
\pr:W \to \BW
\end{equation*}
the projection along $w_0$. For each $\mu$-isotropic subspace $U_3 \subset W$ the subspace $\pr(U_3) \subset \BW$ is also $\mu$-isotropic, but its 
dimension may be either 3 (typically), or 2 (if $U_3$ contains $w_0$). Thus, the map $\pr$ induces a rational map
\begin{equation*}
\LGr_\mu(3,W) \dashrightarrow \LGr_\mu(3,\BW),
\end{equation*}
to the (even) symplectic Lagrangian Grassmannian. To resolve its indeterminacy it is natural to consider the following odd symplectic flag variety
\begin{equation*}
\LFl_\mu(3,4;W) \subset \Fl(3,4;W)
\end{equation*}
defined as the zero locus of the global section $\mu$ of the vector bundle $\bw2\cU_4^\vee$ on $\Fl(3,4;W)$
(i.e.\ the variety of flags $U_3 \subset U_4 \subset W$ such that $U_4$, and hence a fortiori $U_3$, is $\mu$-isotropic).

In what follows we denote by
\begin{equation*}
h = c_1(\cU_3^\vee) \in \Pic(\LGr_\mu(3,W)) 
\qquad\text{and}\qquad 
\bh = c_1(\bcU_3^\vee) \in \Pic(\LGr_\mu(3,\BW))
\end{equation*}
the ample generators of the Picard groups and their pullbacks to other varieties.

\begin{lemma}\label{lemma:sfl}
The map $\pr$ induces a regular map 
\begin{equation*}
\bpi \colon \LFl_\mu(3,4;W) \to \LGr_\mu(3,\BW)
\end{equation*}
which is a $\P^3$-fibration. The forgetful map 
\begin{equation*}
\pi \colon \LFl_\mu(3,4;W) \to \LGr_\mu(3,W)
\end{equation*}
is the blowup with center in the subvariety isomorphic to $\LGr_\mu(2,\BW)$.
\end{lemma}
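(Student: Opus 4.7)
The proof splits naturally into the analysis of $\bpi$ and $\pi$.

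For $\bpi$, the key observation is that every $\mu$-isotropic 4-subspace $U_4 \subset W$ must contain $w_0 = \Ker\mu$: otherwise $\pr$ would embed $U_4$ as a 4-dimensional $\mu$-isotropic subspace of the symplectic space $\BW$, which is impossible since the maximal isotropic dimension in $\BW$ equals $3$. Thus any such $U_4$ equals $\k w_0 \oplus \bar U_3$ for a unique Lagrangian $\bar U_3 \subset \BW$, and the assignment $(U_3,U_4) \mapsto \bar U_3$ defines a regular map $\bpi$ to $\LGr_\mu(3,\BW)$. Its fiber over $\bar U_3$ is the full Grassmannian $\Gr(3, \k w_0 \oplus \bar U_3) \cong \P^3$ (any 3-subspace of the isotropic $U_4$ is automatically isotropic), so
\begin{equation*}
\LFl_\mu(3,4;W) \cong \Gr_{\LGr_\mu(3,\BW)}(3, \cO \oplus \bcU_3) \cong \P_{\LGr_\mu(3,\BW)}(\cO \oplus \bcU_3^\vee)
\end{equation*}
is a Zariski-locally trivial $\P^3$-bundle, as claimed.

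For $\pi$, the plan is to apply the blowup lemma (Lemma~\ref{lemma:blowup}). Over $\LGr_\mu(3,W)$ the form $\mu$ induces, via $v \mapsto \mu(v,-)\vert_{\cU_3}$, a morphism $\varphi: W/\cU_3 \to \cU_3^\vee$ of vector bundles of ranks $4$ and $3$ (the map factors through $W/\cU_3$ because $\cU_3$ is $\mu$-isotropic). Writing $\Fl(3,4;W) = \P_{\Gr(3,W)}(W/\cU_3)$, so that $\cU_4/\cU_3 \cong \cO(-1)$, the isotropy of $U_4$ over a point of $\LGr_\mu(3,W)$ is equivalent to the vanishing of the composition $\cO(-1) \hookrightarrow p^*(W/\cU_3) \to p^*\cU_3^\vee$, i.e.\ to the vanishing of the induced section of $p^*\cU_3^\vee \otimes \cO(1)$. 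This realizes $\LFl_\mu(3,4;W)$ as the zero locus studied in Lemma~\ref{lemma:blowup} with $\cE = W/\cU_3$, $\cF = \cU_3^\vee$, and $r = 3$.

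To conclude I would identify the degeneracy loci and check the codimension hypotheses. The rank of $\varphi$ at $U_3$ equals $\dim U_3 - \dim(U_3 \cap \Ker\mu)$, so it equals $3$ when $w_0 \notin U_3$ and drops to $2$ when $w_0 \in U_3$. Hence $D_1(\varphi) = \{U_3 \ni w_0\} = \{\k w_0 \oplus \bar U_2 : \bar U_2 \subset \BW \text{ is $\mu$-isotropic of dimension 2}\} \cong \LGr_\mu(2,\BW)$, while $D_2(\varphi) = \emptyset$ since the rank of $\varphi$ never drops below $2$. A direct count gives $\dim \LGr_\mu(3,W) = 12 - 3 = 9$ and $\dim \LGr_\mu(2,\BW) = 8 - 1 = 7$, so $\codim D_1 = 2$ as required; the smoothness (hence Cohen--Macaulayness) of $\LGr_\mu(3,W)$ is standard for odd symplectic Grassmannians with a corank-$1$ form. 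Lemma~\ref{lemma:blowup} then yields that $\pi$ is the blowup of $\LGr_\mu(3,W)$ along $\LGr_\mu(2,\BW)$. The main technical step is pinpointing $D_1(\varphi)$ and verifying $\codim D_1 = 2$ together with the emptiness of $D_2$; once these are in place, everything else is a direct invocation of the blowup lemma.
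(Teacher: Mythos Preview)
Your proposal is correct and follows essentially the same route as the paper: the same argument that every $\mu$-isotropic $U_4$ contains $w_0$ to get the $\P^3$-bundle description of $\bpi$, and the same application of Lemma~\ref{lemma:blowup} to the morphism $\mu\colon W/\cU_3 \to \cU_3^\vee$ for $\pi$. The only point the paper handles more carefully is the \emph{scheme-theoretic} identification $D_1(\varphi) = w_0(\LGr_\mu(2,\BW))$: you verify this set-theoretically via the rank computation, whereas the paper observes (using the exact sequence~\eqref{eq:exact-seq-varphi} from the proof of the blowup lemma, with $\det(\cE^\vee)\otimes\det(\cF)\cong\cO$) that $D_1(\varphi)$ is precisely the zero locus of the section $w_0$ of $W/\cU_3$, which pins down the scheme structure directly.
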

\begin{proof}
Let $U_4 \subset W$ be a $\mu$-isotropic subspace. 
Then the subspace $\pr(U_4) \subset \BW$ is also $\mu$-isotropic.
But since the form $\mu$ on $\BW$ is nondegenerate, we have $\dim(\pr(U_4)) \le 3$, hence any such $U_4$ contains $w_0$, and the space $\pr(U_4) \cong U_4/w_0 \subset \BW$ is Lagrangian.
This shows that the projection $\pr$ induces an isomorphism 
\begin{equation}\label{eq:lg4w-lg3bw}
\LGr_\mu(4,W) \cong \LGr_\mu(3,\BW),
\end{equation}
under which the tautological bundles are related by the (canonically split) exact sequence
\begin{equation}\label{eq:u4-bu3}
0 \to \cO \xrightarrow{\ w_0\ } \cU_4 \xrightarrow{\quad} \bcU_3 \to 0.
\end{equation}
Consequently, the projection $\LFl_\mu(3,4;W) \to \LGr_\mu(4,W)$ can be understood as a map 
\begin{equation*}
\bpi: \LFl_\mu(3,4;W) \to \LGr_\mu(3;\BW).
\end{equation*}
%
Since $\Fl(3,4;W) \cong \Gr_{\Gr(4,W)}(3,\cU_4)$ and the vector bundle $\bw2\cU_4^\vee$ is a pullback from $\Gr(4,W)$, it follows that
\begin{equation*}
\LFl_\mu(3,4;W) \cong 
\Gr_{\LGr_\mu(3,\BW)}(3,\cO \oplus \bcU_3) \cong
\P_{\LGr_\mu(3,\BW)}(\bw3(\cO \oplus \bcU_3)) 
\end{equation*}
is a $\P^3$-bundle. So, it remains to describe the forgetful map $\pi$.

Consider the zero locus of the global section $w_0$ of the vector bundle $W/\cU_3$ on $\LGr_\mu(3,W)$. 
The above arguments show that the projection $\pr$ induces an isomorphism of this zero locus with the symplectic Grassmannian $\LGr_\mu(2,\BW)$ under which the tautological bundles are related by the (canonically split) exact sequence
\begin{equation}\label{eq:u3-bu2}
0 \to \cO \xrightarrow{\ w_0\ } w_0^*\cU_3 \xrightarrow{\quad} \bcU_2 \to 0,
\end{equation}
where we denote the corresponding embedding by
\begin{equation*}\label{eq:map-w0}
w_0 \colon \LGr_\mu(2,\BW) \hookrightarrow \LGr_\mu(3,W).
\end{equation*}
Let us show that the map $\pi:\LFl_\mu(3,4;W) \to \LGr_\mu(3,W)$ is the blowup of the subvariety $w_0(\LGr_\mu(2,\BW)) \subset \LGr_\mu(3,W)$.
For this we use the blowup Lemma~\ref{lemma:blowup}.

Note that $\LFl_\mu(3,4;W)$ embeds into $\P_{\LGr_\mu(3,W)}(W/\cU_3)$ as the zero locus of a section of the vector bundle $\cU_3^\vee \otimes \cO(1)$ 
corresponding to the morphism of vector bundles $\mu: W/\cU_3 \to \cU_3^\vee$ induced by the skew form $\mu$.
It is easy to see that the degeneracy locus $D(\mu)$ of this map is supported (at least set-theoretically) on $w_0(\LGr_\mu(2,\BW))$, hence has codimension 2, and that the second degeneracy locus $D_2(\mu)$ is empty.
Hence, Lemma~\ref{lemma:blowup} applies and shows that $\LFl_\mu(3,4;W)$ is the blowup of $\LGr_\mu(3,W)$ with center in $D(\mu)$,
and the exceptional divisor is isomorphic to the projectivization of the vector bundle $\Ker(W/\cU_3 \xrightarrow{\ \mu\ } \cU_3^\vee)$ on $D(\mu)$.

So, it remains to show that the degeneracy locus $D(\mu)$ of the morphism $W/\cU_3 \to \cU_3^\vee$ is equal to $w_0(\LGr_\mu(2,\BW))$ scheme-theoretically.
For this recall that, by the proof of Lemma~\ref{lemma:blowup}, the dual of this map extends to an exact sequence
\begin{equation}\label{eq:deg-loc-sgr37}
0 \to \cU_3 \to \cU_3^\perp \to \cO \to \cO_D \to 0
\end{equation}
(since $\det(W/\cU_3) \cong \det(\cU_3^\vee)$), hence $D(\mu)$ is the zero locus of a global section of the vector bundle $W/\cU_3$.
This section, evidently, corresponds to the vector $w_0$, and its zero locus was already shown to be equal to $w_0(\LGr_\mu(2,\BW))$.
\end{proof}

\subsection{The zero locus of $\lambda$}\label{subsection:zero-locus-lambda}

Recall that $\cX_{\lambda,\mu}$ is the zero locus of $\lambda$ considered as a global section of the bundle $\cU_3^\perp(h)$ on $\LGr_\mu(3,W)$.
We define 
\begin{equation*}
\tcX_{\lambda,\mu} \subset \LFl_\mu(3,4;W)
\end{equation*}
as the zero locus of $\lambda$, considered as the global section of the pullback of this vector bundle to the odd symplectic flag variety.
Consider the composition of the maps
\begin{equation*}
\tcX_\lm \hookrightarrow \LFl_\mu(3,4;W) \cong \P_{\LGr_\mu(3,\BW)}(\bw3\cU_4) \twoheadrightarrow \LGr_\mu(3,\BW),
\end{equation*}
where the isomorphism in the middle follows from~\eqref{eq:lg4w-lg3bw}.

\begin{lemma}\label{lemma:tcx-zero-loc}
The map $\tcX_\lm \to \LGr_\mu(3,\BW)$ factors as the composition
\begin{equation*}
\tcX_\lm \xrightarrow{\ \bpi\ } \LGr_\mu(3,\BW)_\blam \hookrightarrow \LGr_\mu(3,\BW),
\end{equation*}
where $\LGr_\mu(3,\BW)_\blam$ is the hyperplane section of $\LGr_\mu(3,\BW)$ given by the $3$-form $\blam \in \bw3\BW^\vee$.
Moreover, $\tcX_\lm \subset \P_{\LGr_\mu(3,\BW)_\blam}(\bw3\cU_4)$ is the zero locus of a global section of the bundle $\cU_4^\perp(h)$.
\end{lemma}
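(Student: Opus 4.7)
The plan is to exploit the two-step filtration of $\bw3(W/\cU_3) \cong \cU_3^\perp(h)$ induced by the flag $\cU_3 \subset \cU_4$ on the odd symplectic flag variety. Since $\cU_4/\cU_3$ is a line bundle, this filtration becomes the short exact sequence
\[
0 \to (\cU_4/\cU_3) \otimes \bw2(W/\cU_4) \to \bw3(W/\cU_3) \to \bw3(W/\cU_4) \to 0. \qquad (\star)
\]
The two assertions of the lemma will correspond to the quotient and the subbundle of $(\star)$, respectively.

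First I would study the image of the section $\lambda$ in the quotient $\bw3(W/\cU_4)$. Via $\bw3(W/\cU_4) \cong \det(W/\cU_4) \cong \det\cU_4^\vee$ (using $\det W \cong \cO$), the split sequence~\eqref{eq:u4-bu3} together with the isomorphism~\eqref{eq:lg4w-lg3bw} identifies this line bundle with $\cO(\bh)$ pulled back from $\LGr_\mu(3,\BW)$. The key computation is to verify that the induced section of $\cO(\bh)$ coincides with $\blam$. This is a short check based on the decomposition $\lambda = w_0^\vee \wedge \blam + \lambda'$ from~\eqref{eq:la-la-la}: evaluating $\lambda$ on a basis $(w_0, v_1, v_2, v_3)$ of $\cU_4$ with $\pr v_i$ spanning $\bcU_3$, the contribution of $\lambda'$ is zero (it annihilates any $4$-tuple containing $w_0$), and of the four terms in the expansion of $w_0^\vee \wedge \blam$, only the one pairing $w_0^\vee$ with $w_0$ survives, yielding $\blam(\pr v_1, \pr v_2, \pr v_3)$. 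This identifies the image of $\lambda$ in $\bw3(W/\cU_4)$ with $\bpi^*\blam$ and hence shows that $\bpi$ sends $\tcX_\lm$ into the hyperplane section $\LGr_\mu(3,\BW)_\blam$, yielding the desired factorization.

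For the second assertion, over $\bpi^{-1}(\LGr_\mu(3,\BW)_\blam)$ the image of $\lambda$ in $\bw3(W/\cU_4)$ vanishes by the first step, so by exactness of $(\star)$ the section $\lambda$ lifts uniquely to a section of $(\cU_4/\cU_3) \otimes \bw2(W/\cU_4)$ whose zero locus coincides with $\tcX_\lm$. A bookkeeping calculation with determinants---using $\cU_4/\cU_3 \cong \det\cU_4 \otimes \det\cU_3^\vee$, $\bw2(W/\cU_4) \cong (W/\cU_4)^\vee \otimes \det(W/\cU_4)$, and $\det(W/\cU_4) \cong \det\cU_4^\vee$---collapses the determinant factors and identifies this subbundle with $(W/\cU_4)^\vee \otimes \det\cU_3^\vee = \cU_4^\perp(h)$, completing the proof.

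No serious obstacle is anticipated: the argument is essentially formal bookkeeping within the filtration $(\star)$. The only genuinely non-formal step is the identification of the quotient of $\lambda$ with $\blam$, which is an immediate consequence of the explicit decomposition~\eqref{eq:la-la-la} and is the conceptual reason the hyperplane section $\LGr_\mu(3,\BW)_\blam$ emerges here.
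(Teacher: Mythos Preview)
Your proposal is correct and takes essentially the same approach as the paper. Your exact sequence $(\star)$ is precisely the paper's sequence $0 \to \cU_4^\perp(h) \to \cU_3^\perp(h) \to \cO(\bh) \to 0$ before the determinant identifications are made (the paper obtains it by dualizing the tautological sequence $0 \to \cU_4/\cU_3 \to W/\cU_3 \to W/\cU_4 \to 0$ and twisting by $\cO(h)$), and your identification of the induced section of $\cO(\bh)$ with $\blam$ via the decomposition $\lambda = w_0^\vee \wedge \blam + \lambda'$ is the same computation the paper carries out, phrased there as ``the convolution of $\lambda$ with $w_0$ is $\blam$''.
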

\begin{proof}
Dualizing the tautological sequence $0 \to \cU_4/\cU_3 \to W/\cU_3 \to W/\cU_4 \to 0$, twisting it by~$\cO(h)$, and taking into account an isomorphism 
\begin{equation}\label{eq:u4-u3}
\cU_4/\cU_3 \cong \det(\cU_4) \otimes \det(\cU_3^\vee) \cong \det(\bcU_3) \otimes \det(\cU_3^\vee) \cong \cO(h - \bh)
\end{equation}
(with~\eqref{eq:u4-bu3} used in the second isomorphism), we get an exact sequence
\begin{equation}\label{eq:u4p-u3p}
0 \to \cU_4^\perp(h) \to \cU_3^\perp(h) \to \cO(\bh) \to 0.
\end{equation}
Since $\tcX_\lm \subset \LFl_\mu(3,4;W) \cong \P_{\LGr_\mu(3,\BW)}(\bw3\cU_4)$ is the zero locus of a section of $\cU_3^\perp(h)$, it lies in the zero locus of the induced section of $\cO(\bh)$.
The line bundle $\cO(\bh)$ is a pullback from $\LGr_\mu(3,\BW)$, hence a zero locus of its section is the preimage of the zero locus of a hyperplane section of $\LGr_\mu(3,\BW)$.
%
By definition, evaluation of this section on the subspace $\BU_3 \subset \BW$ is equal to the evaluation of the 4-form $\lambda$ on the corresponding subspace $U_4 = \k w_0 \oplus \BU_3$ of $W$.
Since the convolution of $\lambda$ with $w_0$ is $\blam$, this is equal to the evaluation of $\blam$ on $\BU_3$. 
Thus the hyperplane section of $\LGr_\mu(3,\BW)$ we are interested in is given by $\blam$.

It remains to note that when restricted to the zero locus $\P_{\LGr_\mu(3,\BW)_\blam}(\bw3\cU_4)$ of the section of $\cO(\bh)$, the section $\lambda$ of $\cU_3^\perp(h)$ comes (via~\eqref{eq:u4p-u3p}) from a section of $\cU_4^\perp(h)$, 
and $\tcX_\lm$ is the zero locus of this section.
\end{proof}

Consider the following diagram of vector bundles on $\LGr_\mu(3,\BW)_\blam$.
\begin{equation}\label{diagram:hlam}
\vcenter{\xymatrix{
& \bw3\cU_4 \ar[r] \ar@{-->}[d]_\hlam & \bw3W \otimes \cO \ar[d]^\lambda \\
0 \ar[r] & \cU_4^\perp \ar[r] & W^\vee \otimes \cO \ar[r] & \cU_4^\vee \ar[r] & 0
}}
\end{equation}
Here the bottom line is the tautological exact sequence, and since the composition of the top arrow with the map $\lambda$ and the projection $W^\vee \otimes \cO \to \cU_4^\vee$ vanishes 
(by definition of $\LGr_\mu(3;\BW)_\blam$), there is a dashed arrow $\hlam$ on the left, making the diagram commutative.
It is clear that the map 
\begin{equation}\label{eq:map-hlam}
\hlam \colon \bw3\cU_4 \to \cU_4^\perp
\end{equation}
defined in this way, corresponds to the section of the vector bundle $\cU_4^\perp(h)$ on $\P_{\LGr_\mu(3,\BW)_\blam}(\bw3\cU_4)$ cutting out $\tcX_\lm$.
So, we can use the blowup Lemma~\ref{lemma:blowup} to describe $\tcX_\lm$ as the blowup of $\LGr_\mu(3,\BW)_\blam$.
For this, however, we need to control the degeneracy loci of $\hlam$.

%
%
%
%

\subsection{Degeneracy loci of the morphism $\hlam$}\label{subsection:deg-loc-hlam}

We start by discussing the degeneracy loci of the morphism $\hlam$ on the hyperplane section $\Gr(4,W)_\lambda$ of the Grassmannian (where it is defined via the same diagram~\eqref{diagram:hlam}), and
later we will restrict to the Lagrangian Grassmannian $\LGr_\mu(3,\BW) \subset \Gr(4,W)$.

Recall the definition of the $\Gtwo$-adjoint variety $\Gr_\lambda(5,W) \subset \Gr(5,W)$ and the quadric $\bQ^\vee_\lambda \subset \P(W^\vee)$ from section~\ref{subsection:4form-4space}.

\begin{proposition}\label{proposition:gr-deg-loc}
Let $\Gr(4,W)_{\lambda,k} \subset \Gr(4,W)_\lambda$ be the $k$-th degeneracy locus of the map~\eqref{eq:map-hlam} on the Grassmannian hyperplane section $\Gr(4,W)_\lambda$.
Then $\Gr(4,W)_{\lambda,3} = \emptyset$ and

\noindent$(a)$ there is an isomorphism $\Gr(4,W)_{\lambda,2} \cong \bQ^\vee_\lambda$;

\noindent$(b)$ there is a dominant regular birational morphism $\Gr_{\Gr_\lambda(5,W)}(4,\cU_5) \to \Gr(4,W)_{\lambda,1}$, which is an isomorphism over the complement of $\Gr(4,W)_{\lambda,2}$.
\end{proposition}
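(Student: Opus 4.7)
The map $\hlam \colon \bw3 \cU_4 \to \cU_4^\perp$ on $\Gr(4,W)_\lambda$ has source of rank $4$ and target of rank $3$, so in the notation of Lemma~\ref{lemma:blowup} the locus $\Gr(4,W)_{\lambda,k}$ is where $(4-k) \times (4-k)$ minors of $\hlam$ vanish, equivalently $\rank \hlam \le 3-k$. At a point $U_4$ the dual map $\hlam_{U_4}^\vee \colon W/U_4 \to \bw3 U_4^\vee$ sends $\bar w$ to the restriction to $U_4$ of the $3$-form $\iota_w \lambda$, so $[U_4] \in \Gr(4,W)_{\lambda,k}$ if and only if $\dim \Ker \hlam_{U_4}^\vee \ge k$. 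My plan is to translate each of the three degeneracy conditions into a statement about the position of $U_4 \subset W$ relative to the form $\lambda$.

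For the emptiness of $\Gr(4,W)_{\lambda,3}$: if $\hlam_{U_4}^\vee = 0$ then $\lambda(w, u_1, u_2, u_3) = 0$ for all $w \in W$ and $u_i \in U_4$, which together with $\lambda|_{U_4} = 0$ forces $\lambda|_{U_5} = 0$ for every $5$-subspace $U_5 \supset U_4$. Such a $U_4$ would produce a linear $\P^2 \subset \Gr_\lambda(5,W)$, and I would rule this out using the $\Gtwo$-homogeneous structure of $\Gr_\lambda(5,W)$ (Lemma~\ref{lemma:g2-grassmannian}) together with an explicit check in the coordinates of Lemma~\ref{lemma:4form-general}(iv).

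For part~(b): if $0 \ne \bar w \in \Ker \hlam_{U_4}^\vee$ then $U_5 := U_4 + \k w$ satisfies $\lambda|_{U_5} = 0$, hence $[U_5] \in \Gr_\lambda(5,W)$; conversely, every such $U_5 \supset U_4$ yields a nonzero element of $\Ker \hlam_{U_4}^\vee$. Therefore the forgetful morphism $\Gr_{\Gr_\lambda(5,W)}(4, \cU_5) \to \Gr(4,W)_\lambda$ has set-theoretic image exactly $\Gr(4,W)_{\lambda,1}$, with fiber over $U_4$ equal to $\P(\Ker \hlam_{U_4}^\vee)$. Off $\Gr(4,W)_{\lambda,2}$ this kernel is $1$-dimensional, so the (projective, bijective) map is an isomorphism onto its image; above $\Gr(4,W)_{\lambda,2}$ the kernel is $2$-dimensional (since $\Gr(4,W)_{\lambda,3}$ is empty), so the fiber is~$\P^1$, giving the claimed dominant birational morphism.

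For part~(a) I would construct mutually inverse $\Gtwo$-equivariant morphisms between $\bQ^\vee_\lambda$ and $\Gr(4,W)_{\lambda,2}$. One direction sends $[w^\vee] \in \bQ^\vee_\lambda$ to $U_4 := \Im(\lambda^\vee \conv w^\vee \colon W^\vee \to W)$; by the definition of $\bQ^\vee_\lambda$ from section~\ref{subsection:4form-4space} the $2$-vector $\lambda^\vee \conv w^\vee$ has rank at most $4$, and a single explicit check at $[w^\vee] = [x_6]$ in the coordinates of Lemma~\ref{lemma:4form-general}, combined with transitivity of $\Gtwo$ on $\bQ^\vee_\lambda$, shows the rank is exactly $4$. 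The identity $\langle w^\vee, (\lambda^\vee \conv w^\vee) \conv v^\vee\rangle = -\langle v^\vee, \lambda^\vee \conv (w^\vee \wedge w^\vee)\rangle = 0$ gives $U_4 \subset \Ker w^\vee$, and the same base-point computation verifies $\lambda|_{U_4} = 0$ and $\dim \Ker \hlam_{U_4}^\vee = 2$. The inverse sends $[U_4] \in \Gr(4,W)_{\lambda,2}$ to the point of $\P(W^\vee)$ dual to the hyperplane $U_6 \subset W$ obtained as the preimage of the $2$-dimensional subspace $\Ker \hlam_{U_4}^\vee \subset W/U_4$; that this point lies on $\bQ^\vee_\lambda$ and inverts the previous construction again follows by $\Gtwo$-equivariance from the base-point check. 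The main obstacle is to promote the resulting $\Gtwo$-equivariant bijection to an isomorphism of schemes, and this is where equivariance pays off: it reduces each non-formal claim (ranks, membership in $\Gr(4,W)_{\lambda,2}$, dimension of the kernel, and compatibility of differentials) to a finite verification at the single base point $[x_6] \in \bQ^\vee_\lambda$.
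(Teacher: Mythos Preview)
Your proposal is correct and follows essentially the same route as the paper. The translation via the dual map $\hlam^\vee$ and the identification of $\Ker \hlam_{U_4}^\vee$ with $\{U_5 \supset U_4 : \lambda|_{U_5} = 0\}$ is exactly how the paper handles part~(b) and the emptiness of $\Gr(4,W)_{\lambda,3}$; the paper simply cites \cite{kapustka2013genus10} for the absence of planes in $\Gr_\lambda(5,W)$ rather than doing a coordinate check.

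The only noticeable difference is in part~(a). Starting from $U_4 \in \Gr(4,W)_{\lambda,2}$, the paper notes that for the resulting hyperplane $U_6$ every intermediate $U_5$ satisfies $U_5^\perp \in \Gr_{\lambda^\vee}(2,W^\vee)$, and then invokes Remark~\ref{remark:gr27-g2} (that no point off $\bQ^\vee_\lambda$ lies on such a $U_5^\perp$) to conclude $w^\vee := U_6^\perp \in \bQ^\vee_\lambda$; you reach the same conclusion by $\Gtwo$-equivariance and a single base-point check. The reverse map, via the rank-$4$ bivector $\lambda^\vee \conv w^\vee$, is the same in both treatments (the paper phrases it in terms of the kernel $U_4^\perp$, you in terms of the image $U_4$). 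Your explicit appeal to $\Gtwo$-equivariance is arguably cleaner for part~(a), though both the paper and your proposal leave the upgrade from bijection to scheme-theoretic isomorphism somewhat implicit.
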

\begin{proof}
We start with part $(b)$. 
Assume the map $\hlam\colon \bw3\cU_4 \to \cU_4^\perp$ is degenerate at point $U_4$ of $\Gr(4,W)$. 
Then its image is contained in a 2-dimensional subspace of $U_4^\perp$. 
Every such subspace can be written as $U_5^\perp$ for some 5-dimensional subspace $U_5 \subset W$ containing $U_4$.
The condition $\hlam(\bw3U_4) \subset U_5^\perp$ with the condition $\lambda(\bw4U_4) = 0$ (defining the hyperplane section $\Gr(4,W)_\lambda \subset \Gr(4,W)$), together imply that $U_5$ is $\lambda$-isotropic.
Thus $U_5 \in \Gr_\lambda(5,W)$. 

Conversely, for every $U_5 \in \Gr_\lambda(5,W)$, any subspace $U_4 \subset U_5$ gives a point of $\Gr(4,W)_{\lambda,1}$. 
This means that the natural map $\Gr_{\Gr_\lambda(5,W)}(4,\cU_5) \to \Gr(4,W)$ is surjective onto the degeneracy locus $\Gr(4,W)_{\lambda,1}$.
Moreover, if $U_4 \in \Gr(4,W)_{\lambda,1} \setminus \Gr(4,W)_{\lambda,2}$, then the subspace $U_5$ is determined uniquely by~$U_4$, hence the constructed map is an isomorphism over the complement of $\Gr(4,W)_{\lambda,2}$.

Now assume $U_4 \in \Gr(4,W)_{\lambda,2}$. 
Then the same argument as above shows that there is a 6-dimensional subspace $U_6 \subset W$ such that for any $U_5$ such that $U_4 \subset U_5 \subset U_6$ the subspace $U_5$ is $\lambda$-isotropic.
Let $w^\vee \in \P(W^\vee)$ be the point corresponding to the subspace $U_6$.
Since $w^\vee \in U_5^\perp$, it follows from Remark~\ref{remark:gr27-g2} that $w^\vee \in \bQ^\vee_\lambda$.


Conversely, if $U_6 \subset W$ is the subspace corresponding to a point $w^\vee \in \bQ^\vee_\lambda$ then the bivector $\lambda^\vee \conv w^\vee$ is degenerate, and moreover has rank 4.
Therefore, its kernel is 2-dimensional and can be written as $U_4^\perp$ for a unique subspace $U_4 \subset U_6$.
Then $U_5 \subset U_6$ is $\lambda$-isotropic if and only if $U_4 \subset U_5$.
Hence the image of $\hlam$ at $U_4$ is $U_6^\perp$.
Altogether, this means that there is a surjective map $\bQ^\vee_\lambda \to \Gr(4,W)_{\lambda,2}$, which is an isomorphism over the complement of $\Gr(4,W)_{\lambda,3}$.
So, it remains to show that $\Gr(4,W)_{\lambda,3} = \emptyset$.

Assume that $U_4 \in \Gr(4,W)_{\lambda,3}$, i.e.\ $\hlam = 0$ at $U_4$. 
Then every $U_5$ containing $U_4$ is $\lambda$-isotropic.
Therefore, the projective plane $\P(W/U_4) \subset \Gr(5,W)$ is contained in $\Gr_\lambda(5,W)$.
But the $\Gtwo$-adjoint variety $\Gr_\lambda(5,W)$ does not contain planes by \cite{kapustka2013genus10}.
This contradiction shows that $\Gr(4,W)_{\lambda,3} = \emptyset$ and thus completes the proof of the Proposition.
\end{proof}

Now we go back to the symplectic situation. Denote by $\LGr_\mu(3,\BW)_{\blam,k}$ the $k$-th degeneracy locus of the morphism~\eqref{eq:map-hlam}.
Clearly,
\begin{equation*}
\LGr_\mu(3,\BW)_{\blam,k} = \LGr_\mu(3,\BW) \cap \Gr(4,W)_{\lambda,k}.
\end{equation*}
Our next goal is to describe these intersections. First, consider the case $k = 1$. 
Instead of the degeneracy locus $\Gr(4,W)_{\lambda,k}$ consider its blowup $\Gr_{\Gr_\lambda(5,W)}(4,\cU_5)$ and instead of the intersection consider the fiber product
\begin{equation}\label{eq:z-def}
Z := \LGr_\mu(3,\BW) \times_{\Gr(4,W)} \Gr_{\Gr_\lambda(5,W)}(4,\cU_5).
\end{equation}
Note that we have a canonical embedding
\begin{equation*}
Z \hookrightarrow \Gr_{\Gr_\lambda(5,W)}(4,\cU_5) \hookrightarrow \Fl(4,5;W).
\end{equation*}
Thus $Z$ parameterizes some flags $(U_4,U_5)$ of subspaces of $W$ such that $U_5$ is $\lambda$-isotropic and $U_4$ is $\mu$-isotropic.
As usual, we denote by $\cU_4 \hookrightarrow \cU_5 \hookrightarrow W \otimes \cO_Z$ the pullback to $Z$ of the tautological flag.

\begin{proposition}\label{proposition:s-z}
The fiber product $Z$ is isomorphic to a $\P^1$-bundle over a sextic del Pezzo surface $S \subset \P(\bw2A_1) \times \P(\bw2A_2)$.
\end{proposition}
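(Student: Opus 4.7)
The plan is to describe $Z$ via the two projections implicit in its fiber-product definition. A point of $Z$ is a pair $(\BU_3, U_5)$ with $\BU_3 \in \LGr_\mu(3,\BW)$, $U_5 \in \Gr_\lambda(5,W)$, and $\k w_0 \oplus \BU_3 \subset U_5$. Since $w_0 \in U_5$, we can write $U_5 = \k w_0 \oplus \BU_4$ for a unique $\BU_4 \subset \BW$ containing $\BU_3$; using the decomposition $\lambda = w_0^\vee \wedge \blam + \lambda'$ of~\eqref{eq:la-la-la}, the $\lambda$-isotropy of $U_5$ splits into the two conditions $\blam(\bw3\BU_4) = 0$ and $\lambda'(\bw4\BU_4) = 0$. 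By Corollary~\ref{corollary:gr46-3form} the first condition identifies the admissible $\BU_4$'s with $\Gr(2,A_1) \times \Gr(2,A_2) \cong \P(\bw2 A_1) \times \P(\bw2 A_2)$, via $\BU_4 = \BU_{2,A_1} \oplus \BU_{2,A_2}$. Since $\lambda'$ from~\eqref{eq:lambda-mu-explicit} is of pure type $(2,2)$ with respect to the splitting $\BW = A_1 \oplus A_2$, the second condition cuts out a divisor of class $\cO(1,1)$ on this product.

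Next, the fiber of $Z$ over such a $\BU_4$ parameterizes $3$-subspaces $\BU_3 \subset \BU_4$ with $\mu|_{\BU_3} = 0$. A $4 \times 4$ skew form admits isotropic $3$-subspaces precisely when its rank is at most $2$, equivalently when its Pfaffian $\mu^2(\bw4 \BU_4)$ vanishes; in the rank-$2$ case these form a pencil $\P^1$ (the kernels of the linear forms in the $2$-dimensional image of $\mu|_{\BU_4}$), while in the rank-$0$ case they form a full $\P^3$. Inspecting~\eqref{eq:lambda-mu-explicit}, only the $K_i$-terms of $\mu^2$ are of type $(2,2)$ and contribute on such $\BU_4$, so the Pfaffian condition yields a second divisor of class $\cO(1,1)$ on $\P(\bw2 A_1) \times \P(\bw2 A_2)$.

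Writing $\BU_{2,A_1}$ and $\BU_{2,A_2}$ as bivectors $\alpha_1 = s_1 e_{23} - s_2 e_{13} + s_3 e_{12}$ and $\alpha_2 = t_1 e_{56} - t_2 e_{46} + t_3 e_{45}$ in the coordinates of Lemma~\ref{lemma:bases-a1-a2}, a direct computation gives
\[
\lambda'(\alpha_1 \wedge \alpha_2) = s_1 t_3 + s_2 t_2 + s_3 t_1,
\qquad
\mu^2(\alpha_1 \wedge \alpha_2) = K_1 s_1 t_3 + K_2 s_2 t_2 + K_3 s_3 t_1.
\]
Because $K_1, K_2, K_3$ are pairwise distinct by~\eqref{assumption:pencil-regular}, a Jacobian inspection confirms that the complete intersection $S$ of these two $(1,1)$-hypersurfaces in $\P(\bw2 A_1) \times \P(\bw2 A_2)$ is smooth of dimension $2$; adjunction then gives $K_S^2 = (h_1 + h_2)^4 = 6$, so $S$ is a sextic del Pezzo surface (one may explicitly recognize it as the graph closure of a Cremona transformation of $\P^2$, blown up at three coordinate points).

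It remains to check that the fibers of $Z \to S$ are uniformly $\P^1$, i.e., that the rank of $\mu|_{\BU_4}$ is constantly equal to $2$ along $S$. This is the main obstacle, and the only place where the specific nonvanishings~\eqref{assumption:m-nonzero} (rather than only the generic conditions on $\lambda$ and $\mu$) are essential: the rank could drop to $0$ only when $\mu|_{\BU_{2,A_1}} = \mu|_{\BU_{2,A_2}} = 0$ and the pairing between $\BU_{2,A_1}$ and $\BU_{2,A_2}$ induced by $\mu$ also vanishes, and a direct computation with the matrix~\eqref{eq:mu-matrix} in the spirit of Lemma~\ref{lemma:no21cases} rules this out. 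Once this is settled, $Z$ is naturally realized as the projectivization $\P_S(\Im(\mu|_{\cU_4}))$ of the rank-$2$ image of $\mu|_{\cU_4}\colon \cU_4|_S \to \cU_4^\vee|_S$, hence is a smooth $\P^1$-bundle over the smooth surface $S$.
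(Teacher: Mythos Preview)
Your approach is essentially the same as the paper's: rewrite $Z$ in terms of flags $\BU_3 \subset \BU_4 \subset \BW$, identify the admissible $\BU_4$'s as a $(1,1)$--complete intersection $S$ in $\P(\bw2A_1)\times\P(\bw2A_2)$ via Corollary~\ref{corollary:gr46-3form}, and then show the fiber over each $\BU_4$ is a $\P^1$ of $\mu$-isotropic hyperplanes. Your explicit equations for $S$ and the smoothness check are fine (though the paper phrases it more conceptually: smoothness is precisely transversality of the pencil $\langle\lambda',\mu^2\rangle$ to the discriminant, which is literally assumption~\eqref{assumption:pencil-regular}).

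The one weak spot is your treatment of the rank-$0$ case. You claim this is ``the only place where the specific nonvanishings~\eqref{assumption:m-nonzero}\dots are essential'' and defer to an unspecified matrix computation. This is both unnecessary and a misattribution of hypotheses. The paper disposes of this case in one line using only the nondegeneracy of $\mu$ on $\BW$: the kernel of $\mu|_{\BU_4}$ equals $\BU_4 \cap \BU_4^{\perp\mu}$, and since $\mu$ is nondegenerate on the $6$-space $\BW$ one has $\dim\BU_4^{\perp\mu} = 2$, so the kernel has dimension at most $2$ and the rank is at least $2$. Equivalently, a $4$-dimensional subspace of a symplectic $6$-space is never isotropic. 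No appeal to~\eqref{assumption:m-nonzero} or to coordinates is needed here. This also immediately gives the paper's identification of the fiber as $\P(\BU_4/\BU_4^{\perp\mu})$, which agrees with your $\P(\Im(\mu|_{\BU_4}))$ under the isomorphism induced by $\mu$.
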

\begin{proof}
By definition $Z$ parameterizes pairs $(U_4,U_5)$, where $U_5 \subset W$ is a $\lambda$-isotropic subspace, and $U_4 \subset U_5$ is a $\mu$-isotropic subspace.
In particular, $U_4$ and hence also $U_5$ contains $w_0$.
So, one can rephrase the definition of $Z$ by saying that it parameterizes all pairs $(\BU_3,\BU_4)$, where $\BU_3 \subset \BU_4 \subset \BW$,
the subspace $\BU_4$ is both $\blam$ and $\lambda'$-isotropic, and $\BU_3$ is $\mu$-isotropic.
Since $\dim\BU_3 = 3$ and $\dim\BU_4 = 4$, the last condition means 
that the restriction of $\mu$ to $\BU_4$ is degenerate, hence the 4-form $\mu^2 = \mu \wedge \mu$ vanishes on $\BU_4$, i.e.\ $\BU_4$ is also $\mu^2$-isotropic. 
So, consider the subvariety 
\begin{equation*}
S \subset \Gr(4,\BW),
\end{equation*}
parameterizing subspaces $\BU_4 \subset \BW$ that are isotropic with respect to $\blam$, $\lambda'$ and $\mu^2$.
The above discussion shows that there is a map
\begin{equation*}
\sigma: Z \to S,
\qquad 
(\BU_3,\BU_4) \mapsto \BU_4.
\end{equation*}
We will show first that $S$ is a sextic del Pezzo surface, and then that $\sigma$ is a $\P^1$-bundle.

First, the locus of $\blam$-isotropic subspaces $\BU_4 \subset \BW$ is the zero locus of the section $\blam$ of the vector bundle $\bw3\bcU_4^\vee$ on $\Gr(4,\BW)$.
By Corollary~\ref{corollary:gr46-3form} it is isomorphic to 
\begin{equation*}
\Gr(2,A_1) \times \Gr(2,A_2) \cong \P(\bw2A_1) \times \P(\bw2A_2) \cong \P^2 \times \P^2.
\end{equation*}
By~\eqref{eq:gr46-3form-bu4} we have $\bw4\bcU_4^\vee \cong \bw2\cU_{2,A_1}^\vee \otimes \bw2\cU_{2,A_2}^\vee \cong \cO_{\P(\wedge^2A_1) \times \P(\wedge^2A_2)}(1,1)$.
Therefore, the zero loci of $\lambda'$ and $\mu^2$ on it are two divisors of bidegree $(1,1)$ in the above product corresponding to the pairings between $\bw2A_1$ and $\bw2A_2$ given by $\lambda'$ and $\mu^2$,
and $S$ is the intersection of these two divisors.
It is well known that an intersection of two such divisors is a smooth surface if and only if the line spanned by their equations in the space $\P(\bw2A_1^\vee \otimes \bw2A_2^\vee)$ is transversal to the divisor of degenerate pairings.
By assumption~\eqref{assumption:pencil-regular} transversality holds for the pencil generated by $\lambda'$ and $\mu^2$, hence $S$ is a smooth surface.
By adjunction its canonical class is the restriction of $\cO(-1,-1)$, and its anticanonical degree equals the degree of $\P^2 \times \P^2$, which is equal to~6. 
So, finally we see that $S$ is a sextic del Pezzo surface.

\begin{remark}\label{remark:s}
Note that the projections $S \to \P(\bw2A_1)$ and $S \to \P(\bw2A_2)$ are birational and provide two representations of $S$ as a blowup of $\P^2$ in three points. 
If we choose a basis of $\BW$ in which $\lambda'$ and $\mu^2$ have form~\eqref{eq:lambda-mu-explicit}, then the centers of the blowups are given
by the points $\{e_{12},e_{13},e_{23}\} \in \P(\bw2A_1)$ and $\{e_{45}, e_{46}, e_{56}\} \in \P(\bw2A_2)$ respectively.

We denote by $\bh_1$ and $\bh_2$ the pullbacks to $S$ of the hyperplane classes of $\P(\bw2A_1)$ and $\P(\bw2A_2)$ respectively.
Then
\begin{equation}\label{eq:s-intersection}
\bh_1^2 = \bh_2^2 = 1,
\qquad 
\bh_1\cdot\bh_2 = 2
\end{equation}
and $\bh_1 + \bh_2$ is the anticanonical class on $S$.
\end{remark}

Now consider the fiber of the map $\sigma:Z \to S$ over a point of $S$ corresponding to a subspace $\BU_4 \subset \BW$.
By definition of $Z$, it parameterizes all $\mu$-isotropic 3-subspaces in $\BU_4$.
The kernel of the restriction of $\mu$ to $\BU_4$ is equal to the intersection $\BU_4 \cap \BU_4^{\perp\mu}$, where $\BU_4^{\perp\mu} \subset \BW$ is the orthogonal of $\BU_4$ with respect to $\mu$.
Since $\mu$ is nondegenerate on $\BW$, the orthogonal is 2-dimensional.
On the other hand, the rank of $\mu$ on $\BU_4$ is even, hence the dimension of $\BU_4 \cap \BU_4^{\perp\mu}$ is even.
Thus $\mu$ is degenerate on $\BU_4$ if and only if $\BU_4^{\perp\mu} \subset \BU_4$ (i.e.\ $\BU_4$ is coisotropic), and then $\BU_4^{\perp\mu}$ is the kernel of the restriction of $\mu$ to $\BU_4$.
A 3-subspace in such $\BU_4$ is isotropic if and only if it contains the kernel $\BU_4^{\perp\mu}$ of the restriction of $\mu$, hence the fiber of $\sigma$ over $\BU_4$ is the projective line $\P(\BU_4/\BU_4^{\perp\mu}) \cong \P^1$.

The argument above can be rephrased by saying that $Z$ is the projectivization of the following vector bundle over $S$.
Let $\cU_{2,A_1}$ and $\cU_{2,A_2}$ be the pullbacks to $S$ of the tautological vector bundles on $\Gr(2,A_1)$ and $\Gr(2,A_2)$. 
Note that the corresponding quotient bundles are $\cO(\bh_1)$ and $\cO(\bh_2)$ respectively:
\begin{equation*}
0 \to \cU_{2,A_1} \to A_1 \otimes \cO \to \cO(\bh_1) \to 0,
\qquad
0 \to \cU_{2,A_2} \to A_2 \otimes \cO \to \cO(\bh_2) \to 0.
\end{equation*}
By Corollary~\ref{corollary:gr46-3form} and~\eqref{eq:gr46-3form-bu4} we have $\bcU_4 = \cU_{2,A_1} \oplus \cU_{2,A_2}$.
Summing up the above sequences we deduce that $\BW/\bcU_4 \cong \cO(\bh_1) \oplus \cO(\bh_2)$,
hence $\bcU_4^{\perp\mu} \cong \cO(-\bh_1) \oplus \cO(-\bh_2)$, and so
\begin{equation}\label{eq:z-proj}
Z \cong \P_S(\cV_S),
\end{equation}
where the vector bundle $\cV_S$ is defined by an exact sequence
\begin{equation}\label{eq:cvs}
0 \to \cO(-\bh_1) \oplus \cO(-\bh_2) \to \cU_{2,A_1} \oplus \cU_{2,A_2} \to \cV_S \to 0
\end{equation} 
on $S$. 
Note also that the bundle $\cV_S$ is self-dual.
\end{proof}


Let us fix some details of the description of $Z$ established in the proof of Proposition~\ref{proposition:s-z}.
First, the natural embedding $Z \to \Fl(4,5;W)$ factors as the composition
\begin{equation*}
Z \hookrightarrow \Fl(3,4;\BW) \hookrightarrow \Fl(4,5;W),
\end{equation*}
where the second map takes a flag $(\BU_3,\BU_4)$ in $\BW$ to the flag $(\BU_3 \oplus \k w_0, \BU_4 \oplus \k w_0)$ in $W$.
In particular, if $\bcU_3 \hookrightarrow \bcU_4 \hookrightarrow \BW \otimes \cO$ is the restriction to $Z$ of the tautological flag on $\Fl(3,4;\BW)$, then
the restriction to $Z$ of the tautological flag from $\Fl(4,5;W)$ is given by
\begin{equation}\label{eq:z-u4-u5}
\cU_4 \cong \bcU_3 \oplus \cO
\qquad\text{and}\qquad
\cU_5 \cong \bcU_4 \oplus \cO.
\end{equation}
Moreover, 
there is a commutative diagram
\begin{equation*}
\vcenter{\xymatrix{
Z \ar@{^{(}->}[rr] \ar[d]_\sigma && \Fl(3,4;\BW)  \ar[d] \\
S \ar@{^{(}->}[r] & \Gr(2,A_1) \times \Gr(2,A_2) \ar@{^{(}->}[r] & \Gr(4,\BW)
}}
\end{equation*}
and if $\cU_{2,A_1}$ and $\cU_{2,A_2}$ denote the tautological bundles on $\Gr(2,A_1)$ and $\Gr(2,A_2)$ then
\begin{equation}\label{eq:z-bu4}
\bcU_4 \cong \cU_{2,A_1} \oplus \cU_{2,A_2}.
\end{equation} 
Denote by $\sv_Z$ the hyperplane class for the projectivization $Z \cong \P_S(\cV_S)$, see~\eqref{eq:z-proj}. Then we have an exact sequence
\begin{equation}\label{eq:bcu3-1}
0 \to \cO(-\bh_1) \oplus \cO(-\bh_2) \to \bcU_3 \to \cO(-\sv_Z) \to 0
\end{equation} 
and since $\cV_S$ is self-dual, also
\begin{equation}\label{eq:bcu3-2}
0 \to \bcU_3 \to \bcU_4 \to \cO(\sv_Z) \to 0.
\end{equation}

\subsection{A description of the map $\hlam$ on $Z$}\label{subsection:hlam-z}

In this section we discuss the pullback to $Z$ of the map $\hlam$ defined by~\eqref{eq:map-hlam}.
In particular, we show that on $Z$ it has a constant rank equal to~2.
As we will see, this gives a description of the degeneracy loci of $\hlam$ on $\LGr_\mu(3,\BW)_\blam$.

\begin{lemma}\label{lemma:deg-loc-u2-vs}
On the surface $S \subset \Gr(2,A_1) \times \Gr(2,A_2)$ consider the compositions of the maps 
\begin{equation}\label{eq:maps-l1-l2}
\cU_{2,A_1} \hookrightarrow \cU_{2,A_1} \oplus \cU_{2,A_2} \twoheadrightarrow \cV_S
\qquad\text{and}\qquad
\cU_{2,A_2} \hookrightarrow \cU_{2,A_1} \oplus \cU_{2,A_2} \twoheadrightarrow \cV_S
\end{equation}
and let $\BC_1 \subset S$ and $\BC_2 \subset S$ be their degeneracy loci.
Then $\BC_1$ is a smooth rational curve in the linear system $|\bh_1|$, $\BC_2$ is a smooth rational curve in the linear system $|\bh_2|$,
and the ranks of the maps~\eqref{eq:maps-l1-l2} on $\BC_1$ and $\BC_2$ are equal to $1$.
\end{lemma}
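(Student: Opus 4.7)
The plan is to identify $\BC_1$ set-theoretically as the preimage to $S$ of a single line in $\P(\bw2 A_1)$, after which the class, the smoothness, and the rank assertions will all follow from this explicit description.

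At a point $[\BU_4] \in S$, Proposition~\ref{proposition:s-z} identifies the fiber of $\cV_S$ with $\BU_4/\BU_4^{\perp\mu}$, so the kernel of the first map in~\eqref{eq:maps-l1-l2} at this point is $\BU_{2,1}\cap \BU_4^{\perp\mu}$. Decomposing $\mu = \mu_1 + \mu_{12} + \mu_2$ according to $\BW = A_1 \oplus A_2$, a vector $v \in \BU_{2,1}$ lies in $\BU_4^{\perp\mu}$ iff $\mu_1(v,\BU_{2,1}) = 0$ and $\mu_{12}(v,\BU_{2,2}) = 0$. Since $\mu_1|_{\BU_{2,1}}$ is a skew form on a $2$-space, the first condition forces $v = 0$ unless $\mu_1|_{\BU_{2,1}} = 0$, equivalently $\BU_{2,1} \supset \ker\mu_1$. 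When this holds, the $4\times 4$ skew matrix of $\mu|_{\BU_4}$ in a basis of $\BU_{2,1}\cup\BU_{2,2}$ has zero upper-left $2\times 2$ block, and a direct Pfaffian computation yields
$$\mathrm{Pf}(\mu|_{\BU_4}) \;=\; -\det\bigl(\mu_{12}|_{\BU_{2,1}\times\BU_{2,2}}\bigr).$$
The $\mu^2$-isotropy of $\BU_4$ (from $[\BU_4]\in S$) makes this Pfaffian vanish, so the $2\times 2$ pairing $\mu_{12}|_{\BU_{2,1}\times\BU_{2,2}}$ has rank $\le 1$. Its rank is in fact exactly $1$: the map $\mu_{12}\colon A_1 \to A_2^\vee$ is an isomorphism (the determinant of its $3\times 3$ matrix, namely the upper-right block of~\eqref{eq:mu-matrix}, is nonzero under~\eqref{assumption:m-nonzero}), so $\mu_{12}(\BU_{2,1})\subset A_2^\vee$ is $2$-dimensional and its annihilator in $A_2$ is $1$-dimensional, too small to contain the $2$-space $\BU_{2,2}$. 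Consequently $\BU_{2,1}\cap\BU_4^{\perp\mu}$ has dimension exactly~$1$ on the locus $\{[\BU_4]\in S : \BU_{2,1}\supset \ker\mu_1\}$, so the map $\cU_{2,A_1}\to \cV_S$ has rank $1$ there.

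Thus $\BC_1 = p_1^{-1}(L_1)$, where $p_1\colon S\to\P(\bw2 A_1)$ is the projection from Remark~\ref{remark:s} and $L_1\subset \P(\bw2 A_1)$ is the line of $2$-planes in $A_1$ containing the fixed line $\ker\mu_1$; in particular $[\BC_1] = \bh_1$. For smoothness, reading $\mu_1$ off the upper-left $3\times 3$ block of~\eqref{eq:mu-matrix} one computes $\ker\mu_1 = \k\bigl(M_6K_1\,e_1 + M_5K_2\,e_2 + M_4K_3\,e_3\bigr)$, a vector whose three coordinates are all nonzero by~\eqref{assumption:m-nonzero}; hence $\ker\mu_1$ is not contained in any of the three coordinate $2$-planes of $A_1$, equivalently $L_1$ avoids the three base points $\{e_{12},e_{13},e_{23}\}$ of the blowup $p_1$ identified in Remark~\ref{remark:s}. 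Therefore $p_1$ restricts to an isomorphism $\BC_1 \isomto L_1 \cong \P^1$, so $\BC_1$ is a smooth rational curve. The statements about $\BC_2$ follow from the same argument with the roles of $A_1$ and $A_2$ exchanged. The technical heart of the proof is the Pfaffian identity displayed above: it is the $\mu^2$-isotropy of $\BU_4$ that absorbs what would otherwise be a second codimension-$1$ condition coming from $\det(\mu_{12}|_{\BU_{2,1}\times\BU_{2,2}}) = 0$, which is what makes $\BC_1$ a divisor of class $\bh_1$ rather than a zero-dimensional scheme in $S$.
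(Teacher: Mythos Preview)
Your proof is correct and follows essentially the same approach as the paper: both identify $\BC_1$ as the preimage under $S \to \P(\bw2 A_1)$ of the line of $2$-planes containing $\ker\mu_1$, compute the kernel vector explicitly from~\eqref{eq:mu-matrix} to check this line avoids the three base points, and use nondegeneracy of the $A_1$--$A_2$ pairing to rule out rank~$0$. The one stylistic difference is in the converse implication (showing that $\mu_1|_{\BU_{2,1}}=0$ forces $\BU_{2,1}\cap\BU_4^{\perp\mu}\ne 0$): you do this via the Pfaffian identity $\Pf(\mu|_{\BU_4}) = -\det(\mu_{12}|_{\BU_{2,1}\times\BU_{2,2}})$ combined with $\mu^2$-isotropy, whereas the paper argues by a dimension count (if $\BU_{2,1}$ were disjoint from the $2$-dimensional kernel of $\mu|_{\BU_4}$, then $\BU_4 = \BU_{2,1}\oplus\ker(\mu|_{\BU_4})$ would be totally $\mu$-isotropic, contradicting nondegeneracy). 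Both arguments are short and valid; yours makes the role of the $\mu^2$-isotropy condition more transparent, while the paper's avoids the block computation.
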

\begin{proof}
Consider the first map (the second map is dealt with analogously).
Since the kernel of the map $\cU_{2,A_1} \oplus \cU_{2,A_2} \to \cV_S$ by definition of $\cV_S$ is the kernel of the restriction of the skew-form $\mu$ to $\bcU_4$,
the subscheme $\BC_1 \subset S$, parameterizes pairs $(U_{2,A_1},U_{2,A_2})$ such that $U_{2,A_1}$ intersects the kernel of $\mu$.
If this holds, then of course $U_{2,A_1}$ is $\mu$-isotropic. 
Conversely, if $U_{2,A_1}$ is $\mu$-isotropic, it has to intersect the kernel space of $\mu$ on $\bcU_4$ (since otherwise $\mu$ would be identically zero on~$\bcU_4$, and this contradicts to the no-degeneracy of $\mu$ on $\BW$).
Thus $\BC_1$ is the locus of points of $S$, such that $U_{2,A_1}$ is $\mu$-isotropic.
This condition, in its turn, is equivalent to the condition $\mu_1 \in U_{2,A_1}$, where $\mu_1 \in A_1$ is the kernel vector of the restriction of $\mu$ to $A_1$.
Therefore, $\BC_1$ is the preimage of the line in $\P(\bw2A_1)$ corresponding to the vector $\mu_1 \in A_1 \cong \bw2A_1^\vee$ under the projection $S \to \P(\bw2A_1)$.
Hence it is a curve in the linear system~$|\bh_1|$. 

Note that the line corresponding to $\mu_1$ does not pass through the centers of the blowup $S \to \P(\bw2A_1)$, hence $\BC_1$ is a smooth curve.
Indeed, by Remark~\ref{remark:s} in a standard basis the center of this blowup is the triple of points $\{e_{12},e_{13},e_{23}\} \in \P(\bw2A_1)$, while the line is given by the equation
$\mu_1 = M_6K_1e_1 + M_5K_2e_2 + M_4K_3e_3 = M_6K_1x_{23} - M_5K_2x_{13} + M_4K_3x_{12}$.
By~\eqref{assumption:m-nonzero} all coefficients are nonzero.

Finally, note that the subspace $U_{2,A_1}$ cannot be equal to the kernel of $\mu$ on~$\bcU_4$, since otherwise the subspaces $U_{2,A_1} \subset A_1$ and $U_{2,A_2} \subset A_2$
would be $\mu$-orthogonal, which would contradict to the nondegeneracy of the pairing between $A_1$ and $A_2$ induced by $\mu$.
Therefore, the rank of the map $\cU_{2,A_1} \to \cV_S$ is always nonzero, so on $\BC_1$ it is identically equal to~1.
\end{proof}


Recall that $\sv_Z$ denotes the relative hyperplane class of $Z = \P_S(\cV_S)$.

\begin{corollary}\label{corollary:zero-u2-vz}
On $Z$ consider the compositions 
\begin{equation*}
\cU_{2,A_1} \to \sigma^*\cV_S \twoheadrightarrow \cO_Z(\sv_Z)
\qquad\text{and}\qquad
\cU_{2,A_2} \to \sigma^*\cV_S \twoheadrightarrow \cO_Z(\sv_Z)
\end{equation*}
of the maps~\eqref{eq:maps-l1-l2} with the canonical epimorphisms, and let $C_1 \subset Z$ and $C_2 \subset Z_2$ be their zero loci.
Then the map $\sigma:Z \to S$ induces isomorphisms 
\begin{equation*}
C_1 \cong \BC_1
\qquad\text{and}\qquad
C_2 \cong \BC_2,
\end{equation*}
so $C_1$ and $C_2$ are smooth rational curves. Moreover, $C_1 \cap C_2 = \emptyset$.
\end{corollary}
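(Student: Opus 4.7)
The plan is to analyze each composition fiberwise, identify the zero locus set-theoretically with a section of the $\P^1$-bundle $\sigma$ over the degeneracy curve, and then verify the scheme-theoretic structure by a short local calculation. Since $c_1(\cV_S) = 0$ (which follows from~\eqref{eq:cvs}), on $Z = \P_S(\cV_S)$ the tautological sequence takes the form $0 \to \cO_Z(-\sv_Z) \to \sigma^*\cV_S \to \cO_Z(\sv_Z) \to 0$, so the surjection $\sigma^*\cV_S \twoheadrightarrow \cO_Z(\sv_Z)$ at a point $z \in \sigma^{-1}(s)$ is precisely the quotient $(\cV_S)_s/L_z$ by the tautological line $L_z \subset (\cV_S)_s$. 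Consequently, the composition $\beta_1 \colon \cU_{2,A_1} \to \sigma^*\cV_S \to \cO_Z(\sv_Z)$ vanishes at $z$ if and only if the image of $(\cU_{2,A_1})_s$ in $(\cV_S)_s$ is contained in $L_z$.

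First I would dispose of the set-theoretic picture. By Lemma~\ref{lemma:deg-loc-u2-vs}, the map $\cU_{2,A_1} \to \cV_S$ on $S$ has rank $2$ off $\BC_1$ (hence is an isomorphism of rank-$2$ bundles there) and rank exactly $1$ on $\BC_1$. Off $\BC_1$ the composition $\beta_1$ is thus an isomorphism followed by a surjection, and so is surjective and nowhere zero, forcing $C_1 \subset \sigma^{-1}(\BC_1)$. On $\BC_1$ the image of $\cU_{2,A_1}|_{\BC_1} \to \cV_S|_{\BC_1}$ has constant rank $1$ and is a line subbundle $\cL_1 \subset \cV_S|_{\BC_1}$, and $\beta_1$ vanishes at $z \in \sigma^{-1}(s)$ iff $L_z = (\cL_1)_s$. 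This singles out one point in each $\P^1$-fiber, so $C_1$ coincides set-theoretically with the image of the section $\tau_1 \colon \BC_1 \to \sigma^{-1}(\BC_1) = \P_{\BC_1}(\cV_S|_{\BC_1})$ defined by $\cL_1$. For the scheme structure I would work in local coordinates near a point of $\BC_1$: bring $\cU_{2,A_1} \to \cV_S$ into the normal form $\left(\begin{smallmatrix} 1 & 0 \\ 0 & s \end{smallmatrix}\right)$ (where $s$ is a local equation for the smooth curve $\BC_1$), trivialize $\cO_Z(\sv_Z)$ on an affine chart with coordinate $u$ on the $\P^1$-fiber, and check that $\beta_1$ is locally the map $(x,y) \mapsto sy - ux$, whose vanishing ideal is the reduced ideal $(s,u)$. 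This cuts out exactly $\tau_1(\BC_1)$, so $\sigma|_{C_1}\colon C_1 \to \BC_1$ is an isomorphism; the same argument with $\cU_{2,A_2}$ produces $C_2 \cong \BC_2$, and smooth rationality is then inherited from Lemma~\ref{lemma:deg-loc-u2-vs}.

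Finally, for the disjointness $C_1 \cap C_2 = \emptyset$, any common point must project to $s \in \BC_1 \cap \BC_2$, and at such a point $C_i$ corresponds to the point of $\sigma^{-1}(s)$ specified by the line $(\cL_i)_s \subset (\cV_S)_s$. A coincidence would force $(\cL_1)_s = (\cL_2)_s$, but the surjection $\cU_{2,A_1} \oplus \cU_{2,A_2} \twoheadrightarrow \cV_S$ from~\eqref{eq:cvs} implies $(\cL_1)_s + (\cL_2)_s = (\cV_S)_s$, and since $\dim(\cV_S)_s = 2$ while each $(\cL_i)_s$ is one-dimensional the two lines cannot coincide. The only subtle step, in my view, is justifying the local normal form in the scheme-theoretic calculation: it rests on the smoothness of $\BC_1$ and on the transversal degeneration of $\cU_{2,A_1} \to \cV_S$ along it, both of which are already encoded in Lemma~\ref{lemma:deg-loc-u2-vs}, so this is a routine verification rather than a genuine obstacle.
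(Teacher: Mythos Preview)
Your proof is correct. For the isomorphisms $C_i \cong \BC_i$, you and the paper both rest on the rank-$1$ statement of Lemma~\ref{lemma:deg-loc-u2-vs}; the paper simply asserts that the isomorphism ``follows from the fact that the maps~\eqref{eq:maps-l1-l2} both have rank~$1$ on $\BC_1$ and $\BC_2$ respectively,'' whereas you spell out the section $\tau_1$ and add a local normal-form computation for the scheme structure. That extra care is not strictly necessary here but is a reasonable precaution.

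The genuine difference is in the disjointness argument. The paper uses the flag-variety description of $Z$: via~\eqref{eq:z-bu4} and~\eqref{eq:bcu3-2} the composition $\cU_{2,A_i} \to \cO_Z(\sv_Z)$ is identified with $\cU_{2,A_i} \hookrightarrow \bcU_4 \twoheadrightarrow \bcU_4/\bcU_3$, so $C_i$ is exactly the locus where $U_{2,A_i} \subset \BU_3$, and a common point would force the $4$-dimensional space $U_{2,A_1}\oplus U_{2,A_2}$ into the $3$-dimensional $\BU_3$. Your argument stays purely at the bundle level: at a hypothetical intersection point the two image lines $(\cL_i)_s$ would coincide, contradicting the surjectivity of $\cU_{2,A_1}\oplus\cU_{2,A_2}\twoheadrightarrow\cV_S$ from~\eqref{eq:cvs}. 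The paper's route is shorter and ties the statement back to the ambient geometry (which is reused in Lemma~\ref{lemma:deg-loc-blam}); yours has the virtue of being self-contained within the bundle framework you set up and would work equally well without the flag interpretation.
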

\begin{proof}
The first statement follows from the fact that the maps~\eqref{eq:maps-l1-l2} both have rank 1 on $\BC_1$ and~$\BC_2$ respectively.
For the second, note that a point of intersection $C_1 \cap C_2$ should correspond to a subspace $\BU_3 \subset \BU_4$ such that $U_{2,A_1} \subset \BU_3$ and $U_{2,A_2} \subset \BU_3$. 
But this is of course impossible since $\dim\BU_3 = 3$.
\end{proof}

Consider the pullback of the map $\hlam$ to $Z$.
Using~\eqref{eq:z-u4-u5} we see that
\begin{equation*}
\bw3\cU_4 \cong \bw3(\bcU_3 \oplus \cO) \cong \bw3\bcU_3 \oplus \bw2\bcU_3
\end{equation*}
(where the second summand embeds into $\bw3\cU_4$ by the wedge product with $w_0$), and
\begin{equation*}
\cU_5^\perp \cong \bcU_4^\perp \cong \cU_{2,A_1}^\perp \oplus \cU_{2,A_2}^\perp \cong \cO(-\bh_1) \oplus \cO(-\bh_2).
\end{equation*}
Therefore, the map $\hlam$ can be rewritten as
\begin{equation}\label{eq:hlam-z-rewritten}
\hlam\vert_Z = \lambda' \oplus \blam : \bw3\bcU_3 \oplus \bw2\bcU_3 \xrightarrow{\ \qquad\ } \cO(-\bh_1) \oplus \cO(-\bh_2)
\end{equation}
where $\lambda'$ is considered as a map from the first summand in the left hand side, and $\blam$ as a map from the second summand.
We will prove that the map $\hlam$ is surjective (hence its rank is equal to~2) everywhere on $Z$, and this will give a complete description of the degeneracy loci of $\hlam$ on $\LGr_\mu(3,\BW)_\blam$.
For this we first analyze the rank and the cokernel of the component~$\blam$, and then we show that the component $\lambda'$ maps surjectively onto the cokernel of $\blam$.

\begin{lemma}\label{lemma:deg-loc-blam}
The map $\blam: \bw2\bcU_3 \to \cO(-\bh_1) \oplus \cO(-\bh_2)$ on $Z$ is surjective away of the complement of the curve $C_1 \sqcup C_2$.
Moreover, it extends to an exact sequence
\begin{equation}\label{eq:exact-seq-blam}
0 \to \cO(-2\sv_Z - \bh_1 - \bh_2) \to \bw2\bcU_3 \xrightarrow{\ \blam\ } \cO(-\bh_1) \oplus \cO(-\bh_2) \to \cO_{C_2}(-2) \oplus \cO_{C_1}(-2) \to 0,
\end{equation} 
and the right arrow is the direct sum of two restriction maps $\cO(-\bh_1) \to \cO(-\bh_1)\vert_{C_2} \cong \cO_{C_2}(-2)$ and $\cO(-\bh_2) \to \cO(-\bh_2)\vert_{C_1} \cong \cO_{C_1}(-2)$.
\end{lemma}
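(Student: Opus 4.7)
The plan is to analyze $\blam$ via the two exact sequences governing $\bcU_3$ on $Z$: the inclusion $\bcU_3 \hookrightarrow \bcU_4$ from~\eqref{eq:bcu3-2} and the splitting $\bcU_4 \cong \cU_{2,A_1} \oplus \cU_{2,A_2}$ from~\eqref{eq:z-bu4}. The latter yields
\[
\bw2\bcU_4 \cong \cO(-\bh_1) \oplus (\cU_{2,A_1} \otimes \cU_{2,A_2}) \oplus \cO(-\bh_2),
\]
and inspecting formula~\eqref{eq:hlam-z-rewritten} identifies $\blam$ with the composition of the inclusion $\bw2\bcU_3 \hookrightarrow \bw2\bcU_4$ with the projection along the middle summand. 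This reduces everything to combining information about $\bcU_3 \subset \bcU_4$ with the decomposition of the outer $\wedge^2$.

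For the cokernel, I would write $\blam = \blam_1 \oplus \blam_2$ with each component $\blam_i \colon \bw2\bcU_3 \to \cO(-\bh_i)$ equal to $\bw2\pi_i$ for the natural projection $\pi_i \colon \bcU_3 \to \cU_{2,A_i}$. The snake lemma applied to the obvious morphism of short exact sequences starting from~\eqref{eq:bcu3-2} identifies $\coker(\pi_i)$ with $\coker(j_{3-i})$, where $j_{3-i}\colon \cU_{2,A_{3-i}} \hookrightarrow \bcU_4 \twoheadrightarrow \cO(\sv_Z)$. By Corollary~\ref{corollary:zero-u2-vz}, $j_{3-i}$ is a regular section of the rank-$2$ bundle $\cU_{2,A_{3-i}}^\vee(\sv_Z)$ with smooth codimension-$2$ zero locus $C_{3-i}$, so the Koszul complex gives $\coker(j_{3-i}) \cong \cO(\sv_Z)|_{C_{3-i}}$. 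Passing to $2 \times 2$ minors (Hilbert--Burch) the cokernel of $\blam_i$ as a subsheaf of $\cO(-\bh_i)$ equals $\cO(-\bh_i)|_{C_{3-i}} \cong \cO_{C_{3-i}}(-2)$, using $\bh_1 \cdot \bh_2 = 2$ from~\eqref{eq:s-intersection} and the isomorphism $C_{3-i} \cong \BC_{3-i}$. Since $C_1 \cap C_2 = \emptyset$, the full cokernel of $\blam$ is $\cO_{C_2}(-2) \oplus \cO_{C_1}(-2)$, and the right-hand map in the stated sequence is the obvious pair of restrictions.

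For the kernel, the kernel of the projection $\bw2\bcU_4 \twoheadrightarrow \cO(-\bh_1) \oplus \cO(-\bh_2)$ is the middle summand $\cU_{2,A_1} \otimes \cU_{2,A_2}$, so $\ker(\blam) = \bw2\bcU_3 \cap (\cU_{2,A_1} \otimes \cU_{2,A_2})$ inside $\bw2\bcU_4$. Since $\bw2\bcU_3 = \ker(\bw2\bcU_4 \to \bcU_3 \otimes \cO(\sv_Z))$ via the Koszul filtration attached to~\eqref{eq:bcu3-2}, this intersection is $\ker(\phi)$ for the restriction $\phi \colon \cU_{2,A_1} \otimes \cU_{2,A_2} \to \bcU_3 \otimes \cO(\sv_Z)$. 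The standard formula $b_1 \wedge b_2 \mapsto b_1 \otimes \bar{b}_2 - b_2 \otimes \bar{b}_1$ for the $\bw2$-quotient identifies $\phi(u_1 \otimes u_2) = u_1 \otimes j_2(u_2) - u_2 \otimes j_1(u_1)$. Because $\cU_{2,A_1}$ and $\cU_{2,A_2}$ are distinct direct summands of $\bcU_4$, this vanishes iff $j_1(u_1) = 0 = j_2(u_2)$, giving $\ker\phi = \ker(j_1) \otimes \ker(j_2)$. The Koszul complexes for $j_1$ and $j_2$ yield $\ker(j_i) \cong \cO(-\bh_i - \sv_Z)$, hence $\ker(\blam) \cong \cO(-\bh_1 - \bh_2 - 2\sv_Z)$, which matches the left term of the stated sequence.

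The main technical subtlety is that the fiber-wise map $\blam_x$ has rank $1$ along $C_1 \cup C_2$, which would naively suggest a two-dimensional kernel there; yet the sheaf-theoretic $\ker(\blam)$ really is a line bundle. This apparent mismatch is accounted for by the failure of $\mathrm{im}(\blam) = \cI_{C_2}\cO(-\bh_1) \oplus \cI_{C_1}\cO(-\bh_2)$ to be locally free along those curves (nonzero $\mathop{\mathscr{T}\mathit{or}}^1$ with residue fields), and this is precisely why the filtration approach above is needed rather than a direct matrix computation.
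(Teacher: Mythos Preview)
Your proof is correct and shares the paper's starting point---identifying the component $\blam_i$ with $\bw2\pi_i$ where $\pi_i\colon\bcU_3\to\cU_{2,A_i}$ is the projection---but thereafter your approach is noticeably more algebraic than the paper's. For the cokernel, the paper simply identifies the zero locus of $\blam_i$ with the locus where $\cU_{2,A_{3-i}}\subset\bcU_3$, recognizes this as the zero locus of $j_{3-i}$, and reads off the restriction $\cO(-\bh_i)|_{C_{3-i}}$; your route through the snake lemma and Fitting ideals is slightly heavier but has the virtue of making the scheme-theoretic equality $D_1(\pi_i)=C_{3-i}$ explicit. The real difference is in the kernel. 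The paper argues that $\ker\blam$ is reflexive of rank~$1$ on the smooth threefold $Z$, hence a line bundle, and then computes its first Chern class from the surjectivity in codimension~$2$. Your computation $\ker\blam=\ker j_1\otimes\ker j_2$, obtained by intersecting $\bw2\bcU_3$ with the middle summand $\cU_{2,A_1}\otimes\cU_{2,A_2}$ inside $\bw2\bcU_4$ and then unwinding the Koszul complexes for $j_1$ and $j_2$, is a clean direct identification that bypasses the reflexivity step entirely. Both approaches are short; the paper's is perhaps more geometric, while yours gives the line bundle intrinsically rather than only up to numerical equivalence. Your closing remark on the discrepancy between fiberwise and sheaf-theoretic kernels is correct, though not strictly needed once the tensor-product description of $\ker\blam$ is in hand.
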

\begin{proof}
Recall that the 3-form $\blam$ is the sum~\eqref{eq:blam-general} of two summands $\blam_1 \in \bw3A_1^\perp$ and $\blam_2 \in \bw3A_2^\perp$.
It is easy to see that the component $\blam_1$ of the map $\blam : \bw2\bcU_3 \to \cO(-\bh_1) \oplus \cO(-\bh_2)$ can be written as the composition of the map
\begin{equation*}
\bw2\bcU_3 \hookrightarrow \bw2\bcU_4 \cong \bw2(\cU_{2,A_1} \oplus \cU_{2,A_2}) \twoheadrightarrow \bw2\cU_{2,A_1} \cong \cO(-\bh_1)
\end{equation*}
with the embedding $\cO(-\bh_1) \hookrightarrow \cO(-\bh_1) \oplus \cO(-\bh_2)$.
Therefore, the zero locus of $\blam_1$ is the locus of points, where the fiber of $\cU_{2,A_2}$ is contained in the fiber of $\bcU_3$,
hence is equal to the zero locus of the composition 
\begin{equation*}
\cU_{2,A_2} \hookrightarrow \bcU_4 \twoheadrightarrow \bcU_4/\bcU_3.
\end{equation*}
By~\eqref{eq:bcu3-2} the quotient $\bcU_4/\bcU_3$ is isomorphic to $\cO(\sv_Z)$ and it is easy to see that the composition of the maps above coincides with the map of Corollary~\ref{corollary:zero-u2-vz}. 
Hence its zero locus (and thus also the zero locus of the map $\blam_1:\bw2\bcU_3 \to \cO(-\bh_1)$ is equal to the curve $C_2$. 
Analogously, the zero locus of the map $\blam_2:\bw2\bcU_3 \to \cO(-\bh_2)$ is the curve $C_1$. 
Since the curves do not intersect, the rank of the map $\blam : \bw2\bcU_3 \to \cO(-\bh_1) \oplus \cO(-\bh_2)$ on $C_1 \sqcup C_2$ is 1.

Further, let us show that away of $C_1 \sqcup C_2$ the map has rank 2. 
It follows from the above discussion that the kernel of its first component at point  $(U_{2,A_1},U_{2,A_2},\BU_3)$ is equal to $(\BU_3 \cap U_{2,A_2}) \wedge \BU_3 \subset \bw3\BU_3$, 
and the kernel of the second component is $(\BU_3 \cap U_{2,A_1}) \wedge \BU_3 \subset \bw3\BU_3$.
Since both intersections $\BU_3 \cap U_{2,A_i}$ are 1-dimensional away of $C_1 \sqcup C_2$, it follows that the kernels are 2-dimensional and distinct, 
hence their intersection is 1-dimensional (in fact, it is equal to $(\BU_3 \cap U_{2,A_1}) \wedge (\BU_3 \cap U_{2,A_2}) \subset \bw3\BU_3$), hence the rank of the map $\blam$ is 2.

So far, we have proved the first statement. For the second, we note that the kernel of the map $\blam$ is a reflexive sheaf of rank 1, hence a line bundle.
Moreover, since the map $\blam$ is surjective away of a codimension 2 locus, the first Chern class of the kernel is
\begin{equation*}
c_1(\bw2\bcU_3) - c_1(\cO(-\bh_1) \oplus \cO(-\bh_2)) = (-\bh_1-\bh_2) + (-\bh_1-\sv_Z) + (-\bh_2-\sv_Z) + (\bh_1 + \bh_2)
\end{equation*}
(we use~\eqref{eq:bcu3-1} to compute the first Chern class of $\bw2\bcU_3$),
hence the kernel has the form as in~\eqref{eq:exact-seq-blam}. 
Finally, as we have seen earlier, the cokernel of the map $\blam$ equals the direct sum of the cokernels of $\blam_1$ and $\blam_2$, hence is equal to $\cO(-\bh_1)\vert_{C_2} \oplus \cO(-\bh_2)\vert_{C_1}$.
But since the curve $C_2$ projects by $\sigma$ to the curve $\BC_2$ in the linear system $|\bh_2|$, and the intersection product $\bh_1\cdot\bh_2$ on $S$ is equal to 2, the first summand is $\cO_{C_2}(-2)$.
Analogously, the second summand is $\cO_{C_1}(-2)$.
%
%
%
%
%
%
%
%
\end{proof}

\begin{lemma}
On $Z$ the composition of the maps 
\begin{equation*}
\bw3\bcU_3 \xrightarrow{\ \lambda'\ } \cO(-\bh_1) \oplus \cO(-\bh_2) \twoheadrightarrow \Coker(\blam) \cong \cO_{C_2}(-2) \oplus \cO_{C_1}(-2)
\end{equation*}
is surjective. In particular, the map $\hlam$ is surjective on all $Z$.
\end{lemma}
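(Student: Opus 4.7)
My plan is to reduce the surjectivity of the composition to two pointwise nonvanishing checks on the disjoint curves $C_1$ and $C_2$, then perform each check using Lemma~\ref{lemma:no21cases} combined with the annihilation properties of $\lambda'$ from Lemma~\ref{lemma:la-w-gen}. Since the target $\Coker(\blam) \cong \cO_{C_2}(-2) \oplus \cO_{C_1}(-2)$ is supported on the disjoint union $C_1 \sqcup C_2$ and the source $\bw3\bcU_3$ is a line bundle, surjectivity of the composition is equivalent to the pointwise statements that the $\cO(-\bh_1)$-component of $\lambda'(\omega)$ is nonzero at every point of $C_2$, and the $\cO(-\bh_2)$-component of $\lambda'(\omega)$ is nonzero at every point of $C_1$.

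Fix a point of $C_2$. By Corollary~\ref{corollary:zero-u2-vz}, $U_{2,A_2} \subset \bcU_3$, so $\bcU_3 = L_1 \oplus U_{2,A_2}$ with $L_1 \subset U_{2,A_1}$ a line, and the generator of $\bw3\bcU_3$ is $\omega = l_1 \wedge u_2 \wedge u_2'$ with $l_1 \in L_1$ and $u_2, u_2' \in U_{2,A_2}$. The $\cO(-\bh_2) \cong (A_2/U_{2,A_2})^\vee$-component of $\lambda'(\omega)$ is computed by contracting $\omega \wedge a_2$ against $\lambda'$ for $a_2 \in A_2$, which gives $\lambda'(l_1 \wedge (u_2 \wedge u_2' \wedge a_2))$; this factors through $\bw3 A_2$, which $\lambda'$ annihilates by Lemma~\ref{lemma:la-w-gen}. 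Hence the entire image $\lambda'(\omega)$ sits inside the $\cO(-\bh_1)$-summand. On the other hand, $\bcU_3$ is $\mu$-isotropic (this is the defining condition on $Z$) and has the $1{+}2$ form $L_1 \oplus U_{2,A_2}$, so Lemma~\ref{lemma:no21cases}(b) guarantees $\lambda'(\omega) \ne 0$. Combining, the $\cO(-\bh_1)$-component of $\lambda'(\omega)$ is nonzero at every point of $C_2$. The argument on $C_1$ is symmetric: the $2{+}1$ splitting $\bcU_3 = U_{2,A_1} \oplus L_2$ forces the $\cO(-\bh_1)$-component of $\lambda'(\omega)$ through $\bw3 A_1$, which $\lambda'$ also annihilates, and Lemma~\ref{lemma:no21cases}(a) then provides the nonvanishing of the $\cO(-\bh_2)$-component.

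For the ``in particular'' statement: off $C_1 \cup C_2$, Lemma~\ref{lemma:deg-loc-blam} already gives surjectivity of $\blam$, hence of $\hlam$; on $C_1 \cup C_2$ the map $\blam$ has rank exactly $1$, and the surjectivity of $\lambda'$ onto $\Coker(\blam)$ just established supplies the missing direction, so $\hlam = \blam \oplus \lambda'$ has full rank $2$ everywhere. The only nonformal observation required is that the $\bw3 A_i$-annihilation property of $\lambda'$ forces the image of $\lambda'(\omega)$ on each $C_i$ into precisely the surviving cokernel summand, after which the nonvanishing is handed to us directly by Lemma~\ref{lemma:no21cases}; the real genericity input sits inside that earlier lemma via assumption~\eqref{assumption:m-nonzero}.
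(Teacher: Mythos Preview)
Your proof is correct and follows essentially the same route as the paper's: reduce to a pointwise check on each curve $C_i$, use the annihilation of $\bw3A_i$ by $\lambda'$ (Lemma~\ref{lemma:la-w-gen}) to force the image of $\lambda'$ into the single surviving summand, and then invoke Lemma~\ref{lemma:no21cases} for the nonvanishing. Your write-up is slightly more explicit about the contraction computation and the ``in particular'' clause, but the logical skeleton and the key inputs are identical to the paper's argument.
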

\begin{proof}
The summand $\cO_{C_2}(-2)$ in the right hand side is a quotient of $\cO(-\bh_1)$. 
So, for the surjectivity over it, it is enough to show that the map 
\begin{equation*}
\lambda':\bw3\BU_3 \to U_{2,A_1}^\perp \oplus U_{2,A_2}^\perp 
\end{equation*}
has a nonzero first component.
Recall that on $C_2$ the space $\BU_3$ contains $U_{2,A_2}$, and since the form $\lambda'$ annihilates $A_2$, it follows that the image of $\lambda'$ is contained in $U_{2,A_1}^\perp$, so it is enough to show that $\lambda'$ is nonzero on $C_2$.
But this follows immediately from Lemma~\ref{lemma:no21cases}, since vanishing of $\lambda'$ on $\BU_3$ contradicts to the $\mu$-isotropicity.
The surjectivity of $\lambda'$ on $C_1$ is proved analogously, and the surjectivity of $\hlam$ follows.
\end{proof}

Recall that $Z$ by~\eqref{eq:z-def} comes with a projection to $\LGr_\mu(3,\BW) \cap \Gr(4,W)_{\lambda} = \LGr_\mu(3,\BW)_\blam$.

\begin{corollary}\label{corollary:z-deg-loc}
The map $Z \to \LGr_\mu(3,\BW)_\blam$ is a closed embedding and gives an isomorphism 
\begin{equation}\label{eq:z-deg-locus}
Z \cong \LGr_\mu(3,\BW)_{\lambda,1}.
\end{equation} 
Moreover, $\LGr_\mu(3,\BW)_{\lambda,2} = \emptyset$.
\end{corollary}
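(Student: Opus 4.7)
The plan is to deduce the identification $Z \cong \LGr_\mu(3,\BW)_{\lambda,1}$ and the vanishing $\LGr_\mu(3,\BW)_{\lambda,2} = \vide$ from the facts established in the preceding subsection --- that $\hlam|_Z$ is everywhere of rank exactly $2$ --- combined with the description of the Grassmannian degeneracy loci in Proposition~\ref{proposition:gr-deg-loc}. The hard computational work is already done: what remains is essentially a short base-change argument, so I do not expect any substantial new obstacle here.

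First I would observe that $Z$ maps into $\LGr_\mu(3,\BW)_{\lambda,1}$ and avoids $\LGr_\mu(3,\BW)_{\lambda,2}$. Indeed, formula~\eqref{eq:hlam-z-rewritten} exhibits $\hlam|_Z$ as a map into the rank-$2$ bundle $\cO(-\bh_1) \oplus \cO(-\bh_2)$, which by the previous lemmas is everywhere surjective on $Z$. Thus the pullback of $\hlam$ to $Z$ has rank exactly $2$ at every point, so the image of $Z$ in $\LGr_\mu(3,\BW)_\blam$ lies in $\LGr_\mu(3,\BW)_{\lambda,1} \setminus \LGr_\mu(3,\BW)_{\lambda,2}$.

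Next I would show that $Z$ surjects onto $\LGr_\mu(3,\BW)_{\lambda,1}$. By Proposition~\ref{proposition:gr-deg-loc}(b), the morphism $\Gr_{\Gr_\lambda(5,W)}(4,\cU_5) \to \Gr(4,W)_{\lambda,1}$ is surjective, and under the identification $\LGr_\mu(3,\BW) \cong \LGr_\mu(4,W) \subset \Gr(4,W)$, definition~\eqref{eq:z-def} realises $Z \to \LGr_\mu(4,W) \cap \Gr(4,W)_{\lambda,1} = \LGr_\mu(3,\BW)_{\lambda,1}$ as the base change of that surjection, hence surjective. Combined with the previous paragraph, this forces $\LGr_\mu(3,\BW)_{\lambda,2} = \vide$, because any point of it would lie in $\LGr_\mu(3,\BW)_{\lambda,1}$ and hence in the image of $Z$, contradicting the fact that that image avoids $\LGr_\mu(3,\BW)_{\lambda,2}$.

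Finally, the vanishing just obtained places $\LGr_\mu(3,\BW)$ entirely inside the open locus $\Gr(4,W) \setminus \Gr(4,W)_{\lambda,2}$ on which, again by Proposition~\ref{proposition:gr-deg-loc}(b), the morphism $\Gr_{\Gr_\lambda(5,W)}(4,\cU_5) \to \Gr(4,W)$ restricts to a closed immersion with image $\Gr(4,W)_{\lambda,1} \setminus \Gr(4,W)_{\lambda,2}$. Base-changing this closed immersion along $\LGr_\mu(3,\BW) \hookrightarrow \Gr(4,W)$ produces the closed embedding $Z \hookrightarrow \LGr_\mu(3,\BW)$ of~\eqref{eq:z-deg-locus}, and the surjectivity already established identifies its image with $\LGr_\mu(3,\BW)_{\lambda,1}$.
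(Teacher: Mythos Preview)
Your proof is correct and follows essentially the same approach as the paper: surjectivity of $Z \to \LGr_\mu(3,\BW)_{\lambda,1}$ via the base change of Proposition~\ref{proposition:gr-deg-loc}(b), emptiness of $\LGr_\mu(3,\BW)_{\lambda,2}$ from the constant-rank-$2$ computation on $Z$, and then the isomorphism from the fact that the map in Proposition~\ref{proposition:gr-deg-loc}(b) is an isomorphism over the complement of $\Gr(4,W)_{\lambda,2}$. The only difference is cosmetic --- you spell out the base-change reasoning more explicitly than the paper does.
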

\begin{proof}
First, by definition of $Z$ and Proposition~\ref{proposition:gr-deg-loc} the map $Z \to \LGr_\mu(3,\BW)_{\lambda,1}$ is surjective.
Further, since the rank of $\hlam$ on $Z$ is identically equal to 2, so the corank is 1, it follows that $\LGr_\mu(3,\BW)_{\lambda,2}$ is not in the image, hence is empty.
Finally, by Proposition~\ref{proposition:gr-deg-loc} the map is an isomorphism over the complement of $\LGr_\mu(3,\BW)_{\lambda,2}$, hence is an isomorphism.
\end{proof}

\subsection{Proof of the main Theorem}\label{subsection:proof-main-theorem}

Recall that the subvariety $\tcX_\lm \subset \LFl_\mu(3,4;W)$ was defined as the zero locus of the section $\lambda$ of the vector bundle $\cU_3^\perp(h)$.
In particular, the projections $\LFl_\mu(3,4;W) \to \LGr_\mu(3,W)$ and $\LFl_\mu(3,4;W) \to \LGr_\mu(4,W) \cong \LGr_\mu(3,\BW)$ (see~\eqref{eq:lg4w-lg3bw})
induce two maps
$\pi:\tcX_\lm \to \cX_\lm$ and $\bpi:\tcX_\lm \to \LGr_\mu(4,\BW)_\blam$. 

\begin{proposition}\label{proposition:blowups}
The map $\bpi:\tcX_\lm \to \LGr_\mu(3,\BW)_\blam$ is the blowup with center in $Z$.
Furthermore, $\cX_\lm$ is smooth and the map $\pi:\tcX_\lm \to \cX_\lm$ is the blowup with center in a subvariety $F \subset \cX_\lm$ isomorphic to the flag variety $\Fl(1,2;3)$.
\end{proposition}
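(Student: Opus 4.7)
The plan is to apply the blowup Lemma~\ref{lemma:blowup} twice, once to each projection. For $\bpi$: by Lemma~\ref{lemma:tcx-zero-loc}, $\tcX_\lm \subset \P_{\LGr_\mu(3,\BW)_\blam}(\bw3\cU_4)$ is the zero locus of the section of $\cU_4^\perp(h)$ associated to the morphism $\hlam : \bw3\cU_4 \to \cU_4^\perp$ of ranks $4$ and $3$, and the base $\LGr_\mu(3,\BW)_\blam$ is smooth by~\eqref{assumption:lgr-hyperplane}. Corollary~\ref{corollary:z-deg-loc} supplies $D_1(\hlam) = Z$ of codimension~$2$ and $D_2(\hlam) = \emptyset$, so Lemma~\ref{lemma:blowup} directly yields $\tcX_\lm \cong \Bl_Z \LGr_\mu(3,\BW)_\blam$; in particular $\tcX_\lm$ is smooth of dimension~$5$.

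To prepare the same approach for $\pi$, I would first identify $\tcX_\lm$ as a zero locus over $\cX_\lm$. By the proof of Lemma~\ref{lemma:sfl}, $\LFl_\mu(3,4;W) \subset \P_{\LGr_\mu(3,W)}(W/\cU_3)$ is the zero locus of the morphism $\mu : W/\cU_3 \to \cU_3^\vee$; intersecting with $\{\lambda = 0\}$ (which defines $\cX_\lm \subset \LGr_\mu(3,W)$ and is pulled back in the definition of $\tcX_\lm$) identifies $\tcX_\lm \subset \P_{\cX_\lm}(W/\cU_3)$ with the zero locus of the restricted morphism $\mu|_{\cX_\lm}$. To apply Lemma~\ref{lemma:blowup} to this restriction the base $\cX_\lm$ must be Cohen--Macaulay: properness and surjectivity of $\pi$ (the fiber of $\LFl_\mu(3,4;W) \to \LGr_\mu(3,W)$ over any $U_3 \not\ni w_0$ is the single point $U_3^{\perp\mu}$) give $\dim \cX_\lm \le \dim \tcX_\lm = 5$, while the expected-dimension lower bound (from cutting out by a rank-$4$ section in the $9$-dimensional smooth $\LGr_\mu(3,W)$) gives the opposite inequality. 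Hence $\cX_\lm$ is pure of dimension~$5$, a local complete intersection, and Cohen--Macaulay.

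Next I would compute the degeneracy loci. We have $D_1(\mu|_{\cX_\lm}) = \cX_\lm \cap w_0(\LGr_\mu(2,\BW))$ (from the proof of Lemma~\ref{lemma:sfl}), parameterizing $\mu$-isotropic $U_3 = \k w_0 \oplus \BU_2$ which are also annihilated by $\lambda$. Decomposing $\lambda = w_0^\vee \wedge \blam + \lambda'$ via~\eqref{eq:la-la-la} and using that $\lambda' \in \bw4\BW^\vee$ vanishes on any tensor containing $w_0$, a direct expansion of the wedge gives $\lambda \conv (w_0 \wedge u_1 \wedge u_2) = \blam \conv (u_1 \wedge u_2)$, so the $\lambda$-condition reduces to $\BU_2$ being annihilated by $\blam$. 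By Lemma~\ref{lemma:gr26-3form-2form} this locus is $F := \Fl(1,2;A_1)$ of dimension~$3$ (codimension~$2$ in $\cX_\lm$), and $D_2(\mu|_{\cX_\lm}) \subseteq D_2(\mu|_{\LGr_\mu(3,W)}) = \emptyset$. Lemma~\ref{lemma:blowup} then produces the desired isomorphism $\tcX_\lm \cong \Bl_F \cX_\lm$.

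The remaining step---smoothness of $\cX_\lm$---is the most delicate. Outside $F$ smoothness is immediate since $\pi$ is an isomorphism there. Along $F$, the last clause of Lemma~\ref{lemma:blowup} identifies the exceptional divisor with $\P_F(\Ker(\mu|_F))$; since $D_2(\mu|_F) = \emptyset$ the kernel is locally free of rank~$2$ (pointwise, $\Ker(\mu)|_{U_3} = U_3^{\perp\mu}/U_3$ is $2$-dimensional at every $U_3 \in F$). This exceptional divisor being a projective bundle of the expected relative dimension forces $\cI_F/\cI_F^2$ to be locally free of rank~$2$, so $F$ is regularly embedded in $\cX_\lm$; combined with smoothness of $F \cong \Fl(1,2;3)$ and the short-exact conormal sequence $0 \to N^\vee_{F/\cX_\lm} \to \Omega_{\cX_\lm}|_F \to \Omega_F \to 0$, this gives $\Omega_{\cX_\lm}|_F$ locally free of rank $5 = \dim\cX_\lm$, proving smoothness along $F$. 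The main obstacle is precisely this last deduction---inferring regular embedding and then ambient smoothness from the blowup data; as a fallback one could verify smoothness of $\cX_\lm$ by a direct Jacobian computation using the normal form~\eqref{eq:lambda-mu-explicit}, \eqref{eq:mu-matrix}.
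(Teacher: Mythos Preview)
Your argument is correct and tracks the paper's proof closely. The treatment of $\bpi$ is identical. For $\pi$ you reapply Lemma~\ref{lemma:blowup} to the restricted morphism $\mu\vert_{\cX_\lm}$ on $\P_{\cX_\lm}(W/\cU_3)$, whereas the paper instead writes $\tcX_\lm = \LFl_\mu(3,4;W)\times_{\LGr_\mu(3,W)}\cX_\lm$ and argues that the base change of the blowup $\LFl_\mu(3,4;W)\to\LGr_\mu(3,W)$ along $\cX_\lm$ is again a blowup because the intersection $F=\cX_\lm\cap w_0(\LGr_\mu(2,\BW))$ has the expected codimension. Your route is arguably more uniform; the paper's buys a cleaner justification of the one step you flag as delicate.

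Namely, for ``$F$ is lci in $\cX_\lm$'' the paper does not try to read this off the exceptional divisor. It uses that $w_0(\LGr_\mu(2,\BW))\hookrightarrow\LGr_\mu(3,W)$ is a regular embedding of codimension~$2$ (both varieties are smooth), so locally its ideal is generated by a regular sequence of length~$2$; restricting these two generators to the Cohen--Macaulay scheme $\cX_\lm$ still cuts out a codimension-$2$ locus $F$, hence they remain a regular sequence and $F\subset\cX_\lm$ is lci. This is immediate from the data you already have and replaces your inference ``$\P^1$-bundle exceptional divisor $\Rightarrow$ $\cI_F/\cI_F^2$ locally free of rank~$2$'', which, while plausible, is not obviously true for blowups along arbitrary subschemes. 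With that substitution your smoothness argument along $F$ (via the conormal sequence) goes through exactly as written; no Jacobian fallback is needed.
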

\begin{proof}
For the first statement we apply the blowup Lemma~\ref{lemma:blowup}.
By Lemma~\ref{lemma:tcx-zero-loc} we know that $\tcX_\lm$ is the zero locus of a global section of the vector bundle $\cU_4^\perp(h)$ on $\P_{\LGr_\mu(3;\BW)_\blam}(\bw3\cU_4)$
that corresponds to the morphism~\eqref{eq:map-hlam} from a rank 4 vector bundle to a rank 3 vector bundle. 
Moreover, by Corollary~\ref{corollary:z-deg-loc} its degeneracy locus is equal to the subscheme $Z$ and thus has codimension 2 (and the higher degeneracy loci are empty).
Hence Lemma~\ref{lemma:blowup} applies and proves that $\tcX_\lm$ is the blowup of $\LGr_\mu(3;\BW)_\blam$ with center in $Z$.

Since $\LGr_\mu(3,\BW)_\blam$ is smooth by~\eqref{assumption:lgr-hyperplane} and $Z$ is smooth by Proposition~\ref{proposition:s-z}, it follows that~$\tcX_\lm$ is smooth.
Moreover, $\dim\tcX_\lm = 5$, and hence 
$\dim \cX_\lm \le 5$.
On the other hand, since~$\cX_\lm$ is the zero locus of a section of a rank 7 vector bundle on a smooth 12-dimensional variety (see the proof of Lemma~\ref{lemma:lambda-mu-shift}),
its dimension is greater or equal than 5. Combining these two observations, we conclude that the dimension is 5 and $\cX_\lm$ is Cohen--Macaulay.

Now let us show that $\pi$ is also a blowup.
By definition 
\begin{equation}\label{eq:tcx-fiber-product}
\tcX_\lm = \LFl_\mu(3,4;W) \times_{\LGr_\mu(3,W)} \cX_\lm,
\end{equation}
and the map $\LFl_\mu(3,4;W) \to \LGr_\mu(3,W)$ is the blowup with center in $w_0(\LGr_\mu(2,\BW))$ by Lemma~\ref{lemma:sfl}, so it is enough to show that $\cX_\lm$ intersects transversally with $w_0(\LGr_\mu(2,\BW))$.
Since $\cX_\lm$ is the zero locus of the section $\lambda$ of $\cU_3^\perp(1)$ on $\LGr_\mu(3,W)$, the intersection 
\begin{equation*}
F := \cX_\lm \cap w_0(\LGr_\mu(2,\BW)) 
\end{equation*}
is the zero locus of the induced section of the vector bundle $w_0^*(\cU_3^\perp(1))$ on $\LGr_\mu(2,\BW)$. 
By~\eqref{eq:u3-bu2} we have an isomorphism $w_0^*(W/\cU_3) \cong \BW/\bcU_2$, and hence
\begin{equation*}
w_0^*\cU_3^\perp \cong \bcU_2^\perp.
\end{equation*}
Therefore $F$ is the zero locus of the induced by~$\lambda$ section of $\bcU_2^\perp(1)$ on $\LGr_\mu(2,\BW)$.
It is easy to see that this section is given by the 3-form $\lambda \conv w_0 = \blam$. 
By Lemma~\ref{lemma:gr26-3form-2form} we deduce that $F \cong \Fl(1,2;A_1) \cong \Fl(1,2;A_2)$.
In particular, $\dim(F) = 3$ and so
\begin{equation*}
\codim_{\LGr_\mu(2,\BW)}(F) = 4 = \codim_{\LGr_\mu(3,W)}(\cX_\lm),
\end{equation*}
hence the intersection is transversal.
It follows that the fiber product~\eqref{eq:tcx-fiber-product} is the blowup of $\cX_\lm$ with center in $F$.

Finally, by base change and the proof of Lemma~\ref{lemma:sfl} it follows that 
$F$ in $\cX_\lm$ is a locally complete intersection. 
Thus, smoothness of $F$ implies smoothness of $\cX_\lm$ along $F$.
On the other hand, away of $F$ the morphism $\pi$ is an isomorphism, hence $\cX_\lm$ is smooth everywhere.
%
%
%
%
\end{proof}

Theorem~\ref{theorem:main} is a combination of Proposition~\ref{proposition:blowups} and Proposition~\ref{proposition:s-z}.

\subsection{More details}\label{subsection:details}

We finish this section with some details of the geometry of the diagram~\eqref{diagram:fivefolds}.
First, note that there are three $\P^1$-bundles in the picture: $p:E \to F$, $\sigma:Z \to S$ and $\bp:\BE \to Z$.
Each of them is a projectivization of a rank 2 vector bundle
\begin{equation*}
E \cong \P_F(\cV_F),
\qquad 
Z \cong \P_S(\cV_S),
\qquad 
\BE \cong \P_Z(\cV_Z)
\end{equation*}
respectively. These vector bundles can be described explicitly.

\begin{lemma}\label{lemma:vector-bundles}
The bundles $\cV_F$ and $\cV_S$ can be represented as cohomology bundles of the monads
\begin{equation}\label{eq:cvf}
0 \to \cO(-h_1) \oplus \cO(-h_2) \xrightarrow{\ \ \ } (A_1 \oplus A_2) \otimes \cO \xrightarrow{\ \mu\ } \cO(h_1) \oplus \cO(h_2) \to 0
\end{equation} 
on $F$, and
\begin{equation}\label{eq:cvs-monad}
0 \to \cO(-\bh_1) \oplus \cO(-\bh_2) \xrightarrow{\ \ \ } (\bw2A_1 \oplus \bw2A_2) \otimes \cO \xrightarrow{\ \mu^2\ } \cO(\bh_1) \oplus \cO(\bh_2) \to 0
\end{equation} 
on $S$ respectively, and the bundle $\cV_Z$ fits into exact sequences
\begin{equation}\label{eq:cvz}
0 \to \cV_Z \xrightarrow{\ \ \ \ } \bw3\bcU_3 \oplus \bw2\bcU_3 \xrightarrow{\ \hlam\vert_Z\ } \cO(-\bh_1) \oplus \cO(-\bh_2) \to 0
\end{equation}
and
\begin{equation}\label{eq:exact-seq-cvz}
0 \to \cO(-2\sv_Z - \bh_1 - \bh_2) \to \cV_Z \to \cO(-\bh) \to \cO_{C_2}(-2) \oplus \cO_{C_1}(-2) \to 0.
\end{equation} 
\end{lemma}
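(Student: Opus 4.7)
My plan is to establish each of the four identities by identifying the vector bundle as a kernel (or cohomology) of a morphism that we can read off from the blowup constructions of Section~\ref{subsection:proof-main-theorem}, and then re-expressing it using the tautological data of the base. The last part of Lemma~\ref{lemma:blowup} is the key tool throughout, since all three $\P^1$-bundles $E\to F$, $\BE\to Z$ and $Z\to S$ arise as exceptional divisors (or as a scroll built from a degeneracy locus), and the lemma describes the projectivized bundle as the kernel of a morphism of vector bundles.

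For $\cV_F$: by Proposition~\ref{proposition:blowups} the map $\pi\colon\tcX_\lm\to\cX_\lm$ is the pullback of the blowup of Lemma~\ref{lemma:sfl}, which was itself obtained via Lemma~\ref{lemma:blowup} applied to the morphism $\mu\colon W/\cU_3\to\cU_3^\vee$. Hence the last part of Lemma~\ref{lemma:blowup} identifies $\cV_F$ with the restriction to $F$ of $\Ker(\mu\colon W/\cU_3\to\cU_3^\vee)$. Pulling back along $w_0\colon\LGr_\mu(2,\BW)\hookrightarrow\LGr_\mu(3,W)$ via~\eqref{eq:u3-bu2} and using that $\mu$ annihilates $w_0$, this becomes $\Ker(\mu\colon\BW/\bcU_2\to\bcU_2^\vee)$, i.e.\ the cohomology of the monad $\bcU_2\hookrightarrow\BW\otimes\cO\xrightarrow{\mu}\bcU_2^\vee$. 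Substituting $\bcU_2|_F\cong\cO(-h_1)\oplus\cO(-h_2)$ from Lemma~\ref{lemma:gr26-3form} yields exactly~\eqref{eq:cvf}.

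For $\cV_Z$: the short exact sequence~\eqref{eq:cvz} is immediate from the last part of Lemma~\ref{lemma:blowup} applied to $\hlam$ from~\eqref{eq:map-hlam}, together with the restriction formula~\eqref{eq:hlam-z-rewritten} identifying $\hlam|_Z$ with $\lambda'\oplus\blam$. To derive the four-term sequence~\eqref{eq:exact-seq-cvz}, I project $\cV_Z\subset\bw3\bcU_3\oplus\bw2\bcU_3$ onto the first summand $\bw3\bcU_3\cong\cO(-\bh)$. The kernel of this projection is $\Ker(\blam)$ sitting in the second summand, which by~\eqref{eq:exact-seq-blam} equals $\cO(-2\sv_Z-\bh_1-\bh_2)$. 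The image is the subsheaf of $\cO(-\bh)$ consisting of sections whose image under $\lambda'$ lies in $\Image(\blam)$, i.e.\ the kernel of the composite $\cO(-\bh)\xrightarrow{\lambda'}\cO(-\bh_1)\oplus\cO(-\bh_2)\twoheadrightarrow\Coker(\blam)\cong\cO_{C_2}(-2)\oplus\cO_{C_1}(-2)$; the surjectivity of the last arrow is precisely the content of the lemma establishing surjectivity of $\hlam$ on $Z$. Splicing these together gives~\eqref{eq:exact-seq-cvz}.

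For $\cV_S$: I start from the defining sequence~\eqref{eq:cvs} and use the self-duality $\cV_S\cong\cV_S^\vee$ (noted in the proof of Proposition~\ref{proposition:s-z}) to dualize, obtaining $0\to\cV_S\to\bcU_4^\vee\to\cO(\bh_1)\oplus\cO(\bh_2)\to 0$. On each factor $\Gr(2,A_i)$ the wedge-square of the tautological sequence yields a Plücker-type filtration $0\to\cO(-\bh_i)\to\bw2 A_i\otimes\cO\to\cU_{2,A_i}^\vee\to 0$ (using $\bw2\cU_{2,A_i}\cong\cO(-\bh_i)$ and hence $\cU_{2,A_i}(\bh_i)\cong\cU_{2,A_i}^\vee$). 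Taking the direct sum and splicing with the dual of~\eqref{eq:cvs} gives the three-term complex~\eqref{eq:cvs-monad} whose cohomology is $\cV_S$. The main obstacle is identifying the middle map with the one induced by $\mu^2$: concretely, for $\alpha\in\bw2 A_1$ the restricted-projected contraction $\mu^2\conv\alpha\in\bw2\BW^\vee$, pushed to $\bw2\cU_{2,A_1}^\vee\oplus\bw2\cU_{2,A_2}^\vee$, has vanishing first component (since $\bw4 A_1=0$) and second component given by the $K$-type pairing of $\mu^2$; the composition with the left map $\cO(-\bh_1)\oplus\cO(-\bh_2)\hookrightarrow\bw2 A_1\oplus\bw2 A_2$ vanishes precisely because $\bcU_4$ is $\mu^2$-isotropic on $S$ by the very definition of $S$, which matches the right-hand surjection from the dualized~\eqref{eq:cvs} and so completes the identification.
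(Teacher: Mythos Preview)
Your proof is correct and follows essentially the same approach as the paper. For $\cV_F$ and $\cV_Z$ your argument is identical to the paper's: identify each bundle as the kernel supplied by the last clause of Lemma~\ref{lemma:blowup}, then rewrite using the explicit form of the tautological bundles on $F$ (via~\eqref{eq:u3-bu2} and Lemma~\ref{lemma:gr26-3form}) and on $Z$ (via~\eqref{eq:hlam-z-rewritten}); and for~\eqref{eq:exact-seq-cvz} you project $\cV_Z$ onto the summand $\bw3\bcU_3\cong\cO(-\bh)$ and read off kernel and cokernel from~\eqref{eq:exact-seq-blam}, exactly as in the paper.

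The only place you do more work than the paper is $\cV_S$: the paper simply asserts that~\eqref{eq:cvs-monad} is ``just a reformulation of~\eqref{eq:cvs}'', whereas you spell out the reformulation by dualizing~\eqref{eq:cvs} via self-duality of $\cV_S$, inserting the Pl\"ucker sequences $0\to\cO(-\bh_i)\to\bw2A_i\otimes\cO\to\cU_{2,A_i}^\vee\to 0$, and then arguing that the resulting right-hand map is the one induced by $\mu^2$. Your last identification is sketched rather than fully verified, but the paper does not verify it either; the monad with cohomology $\cV_S$ is established regardless, and the label on the arrow is a matter of tracing the identifications through $\mu$.
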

\begin{proof}
By the proof of Lemma~\ref{lemma:sfl} the bundle $\cV_F$ is the kernel of the map $\mu:W/\cU_3 \to \cU_3^\vee$ on~$F$.
Moreover, by Lemma~\ref{lemma:gr26-3form} we have
\begin{equation*}
\begin{aligned}
\cU_3^\vee &\cong \cO \oplus \bcU_2^\vee &&\cong \cO \oplus \cO(h_1) \oplus \cO(h_2),\\
W/\cU_3 &\cong \BW/\bcU_2 &&\cong ((A_1 \oplus A_2) \otimes \cO) / (\cO(-h_1) \oplus \cO(-h_2)),
\end{aligned}
\end{equation*}
and the map $\mu$ factors through $\cO(h_1) \oplus \cO(h_1) \subset \cU_3^\vee$.
Altogether, this identifies $\cV_F$ with the cohomology bundle of the monad~\eqref{eq:cvf}.

Furthermore, monadic representation~\eqref{eq:cvs-monad} is just a reformulation of~\eqref{eq:cvs}.

By Lemma~\ref{lemma:blowup} and the proof of Proposition~\ref{proposition:blowups} the bundle $\cV_Z$ is the kernel of the morphism $\hlam:\bw2\cU_4 \to \cU_4^\perp$ on $Z$. 
Furthermore, as it was explained in section~\ref{subsection:hlam-z}, the map $\hlam$ factors through a surjection~\eqref{eq:hlam-z-rewritten}. This proves~\eqref{eq:cvz}.
Finally, consider the composition
$\cV_Z \hookrightarrow \bw3\bcU_3 \oplus \bw2\bcU_3 \twoheadrightarrow \bw3\bcU_3 \cong \cO(-\bh)$ 
By~\eqref{eq:cvz} its kernel and cokernel are isomorphic to the kernel and the cokernel of the map $\blam: \bw2\bcU_3 \to \cO(-\bh_1) \oplus \cO(-\bh_2)$.
So, applying~\eqref{eq:exact-seq-blam} we deduce~\eqref{eq:exact-seq-cvz}.
\end{proof}

Recall the notation for various divisorial classes introduced earlier:
\begin{itemize}
\item 
$h_1$ and $h_2$ are the hyperplane classes on $\P(A_1)$ and $\P(A_2)$;
\item 
$\bh_1$ and $\bh_2$ are the hyperplane classes on $\P(\bw2A_1)$ and $\P(\bw2A_2)$;
\item 
$\sv_E \in \Pic(E)$ is the hyperplane class on $E = \P_F(\cV_F)$;
\item 
$\sv_Z \in \Pic(Z)$ is the hyperplane class on $Z = \P_S(\cV_S)$;
\item 
$\sv_\BE \in \Pic(\BE)$ is the hyperplane class on $\BE = \P_Z(\cV_Z)$;
\item 
$h$ is the hyperplane class on $\Gr(3,W)$ and its restriction to $\cX_\lm$;
\item  
$\bh$ is the hyperplane class on $\LGr_\mu(2,\BW)_\blam$;
\item 
$e$ and $\be$ are the classes of the exceptional divisors $E$ and $\BE$ in $\tcX_\lm$.
\end{itemize}
As usually, when we have a natural map between two varieties, we suppress the pullback notation for the pullbacks of divisorial classes.


\begin{lemma}\label{lemma:picard}
We have the following relations in the Picard groups. First, we have
\begin{equation}\label{eq:pic-relations}
\begin{cases}
\bh = h - e\\
\be = h - 2e
\end{cases}
\qquad\qquad
\begin{cases}
h = 2\bh -\be\\
e = \bh - \be
\end{cases}
\qquad\qquad\text{in $\Pic(\tcX_\lm)$.}
\end{equation}
Furthermore, we have
\begin{align*}
h       &= h_1 + h_2 & \text{in $\Pic(F)$,} &&
\sv_E   &= -e & \text{in $\Pic(E)$,} \\
\bh     &= \sv_Z + \bh_1 + \bh_2 & \text{in $\Pic(Z)$,} &&
\sv_\BE &= h & \text{in $\Pic(\BE)$.}
\end{align*}
%
%
%
%
%
\end{lemma}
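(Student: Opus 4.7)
The plan is to derive all the relations from the two blowup descriptions of $\tcX_\lm$ provided by Proposition~\ref{proposition:blowups}, using the formula for the class of the exceptional divisor supplied by the ``moreover'' part of Lemma~\ref{lemma:blowup}. In both cases, Lemma~\ref{lemma:blowup} says that the exceptional divisor has line bundle $\det(\cE^\vee) \otimes \det(\cF) \otimes \cO(-1)$, so the computation reduces to tracking first Chern classes and identifying the relative hyperplane class on the ambient projective bundle in terms of $h$ or $\bh$.

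Applying this to $\bpi:\tcX_\lm \to \LGr_\mu(3,\BW)_\blam$, realized (via Lemma~\ref{lemma:tcx-zero-loc}) as the zero locus in $\P_{\LGr_\mu(3,\BW)_\blam}(\bw3\cU_4)$ of the section $\hlam$ of $\cU_4^\perp(h)$, we take $\cE=\bw3\cU_4$, $\cF=\cU_4^\perp$. The isomorphism $\bw3\cU_4\cong \cU_4^\vee\otimes\det\cU_4$ together with $\det\cU_4\cong\cO(-\bh)$ (from~\eqref{eq:u4-bu3}) gives $c_1(\bw3\cU_4)=-3\bh$, while $c_1(\cU_4^\perp)=-\bh$. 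The isomorphism $\Fl(3,4;W)\cong\P(\bw3\cU_4)$ sends $(\cU_3\subset \cU_4)$ to the line $\bw3\cU_3\hookrightarrow\bw3\cU_4$, so the tautological $\cO(-1)$ restricts to $\cO(-h)$ on $\tcX_\lm$, identifying the relative hyperplane class with~$h$. Substituting, $\be = 3\bh-\bh-h = 2\bh-h$. For $\pi$, Lemma~\ref{lemma:sfl} presents $\LFl_\mu(3,4;W)\to\LGr_\mu(3,W)$ as the zero locus in $\P(W/\cU_3)$ of a section of $\cU_3^\vee(1)$; applying Lemma~\ref{lemma:blowup} with $\cE=W/\cU_3$, $\cF=\cU_3^\vee$ and using~\eqref{eq:u4-u3} to identify the tautological line $\cU_4/\cU_3$ with $\cO(h-\bh)$, we find that the exceptional divisor class equals $-h+h+(h-\bh) = h-\bh$, so $e=h-\bh$, i.e.\ $\bh=h-e$. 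Together with $\be=2\bh-h$ these give all four relations in~\eqref{eq:pic-relations} by elementary manipulation. To transfer the computation of $e$ from $\LFl_\mu(3,4;W)$ to $\tcX_\lm$ we use the transversality of $\cX_\lm$ and $w_0(\LGr_\mu(2,\BW))$ established in Proposition~\ref{proposition:blowups}.

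The identifications of the relative hyperplane classes on $E$ and $\BE$ are byproducts of the same computation: since $E \hookrightarrow \P(W/\cU_3)\vert_F$ and $\BE\hookrightarrow\P(\bw3\cU_4)\vert_Z$ as projectivizations of subbundles, the tautological $\cO(1)$ restricts, giving $\sv_E = -e$ (from the identification of the relative hyperplane class of $\P(W/\cU_3)$ on $\tcX_\lm$ as $\bh-h = -e$) and $\sv_\BE = h$.

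For the remaining two relations, the identity $h=h_1+h_2$ in $\Pic(F)$ follows from the chain $F\hookrightarrow \LGr_\mu(2,\BW)\hookrightarrow\LGr_\mu(3,W)$: by~\eqref{eq:u3-bu2} the pullback $w_0^*\cU_3$ splits as $\cO\oplus\bcU_2$, so $h\vert_F = c_1(\bcU_2^\vee)\vert_F$; then Lemma~\ref{lemma:gr26-3form-2form} embeds $F$ into $\P(A_1)\times\P(A_2)$, and~\eqref{eq:gr26-3form-bu2} gives $\bcU_2\cong\cO(-h_1)\oplus\cO(-h_2)$ there, so $c_1(\bcU_2^\vee) = h_1+h_2$. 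For $\bh = \sv_Z + \bh_1 + \bh_2$ in $\Pic(Z)$, take first Chern classes in the short exact sequence~\eqref{eq:bcu3-1}: $c_1(\bcU_3) = -\bh_1-\bh_2-\sv_Z$, hence $\bh = c_1(\bcU_3^\vee) = \sv_Z+\bh_1+\bh_2$.

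The only substantive obstacle here is the careful bookkeeping: correctly identifying the tautological line subbundle inside each ambient projective bundle, tracking the different determinants via the various short exact sequences established earlier, and ensuring that classes pulled back from $\LFl_\mu(3,4;W)$ agree with those on $\tcX_\lm$ after the base-change along $\cX_\lm\hookrightarrow\LGr_\mu(3,W)$ (which is transverse). Once these identifications are in place, every relation reduces to a one-line Chern class computation.
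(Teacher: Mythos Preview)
Your proof is correct and follows essentially the same approach as the paper's: both compute $e$ and $\be$ by applying the exceptional-divisor formula from Lemma~\ref{lemma:blowup} to the two projective-bundle realizations of $\tcX_\lm$ (inside $\P(W/\cU_3)$ and $\P(\bw3\cU_4)$ respectively), identify the relative hyperplane classes via~\eqref{eq:u4-u3}, and then read off the remaining relations from~\eqref{eq:u3-bu2}, \eqref{eq:gr26-3form-bu2}, and~\eqref{eq:bcu3-1}. The only difference is cosmetic ordering --- you treat $\bpi$ before $\pi$ --- and your explicit remark about transversality for the base change is a welcome clarification that the paper leaves implicit.
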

\begin{proof}
To prove~\eqref{eq:pic-relations} it is enough to express the classes of the exceptional divisors $e$ and $\be$ in terms of $h$ and $\bh$ using the blowup Lemma~\ref{lemma:blowup}.

To express $e$ we recall that by the argument of Lemma~\ref{lemma:sfl} the blowup $\pi$ is realized as a subvariety in $\P_{\cX_\lm}(W/\cU_3)$ corresponding to the morphism $\mu:W/\cU_3 \to \cU_3^\vee$.
Since $\det(W/\cU_3) \cong \det(\cU_3^\vee)$, it follows that $-e$ is the relative hyperplane class for this projectivization.
On the other hand, the relative hyperplane class corresponds to the line bundle $(\cU_4/\cU_3)^\vee$, that by~\eqref{eq:u4-u3} is isomorphic to $\cO(\bh - h)$.
Thus $e = h - \bh$.

Similarly, the blowup $\bpi$ is realized as a subvariety in $\P_{\LGr_\mu(3,\BW)_\blam}(\bw3\cU_4)$ corresponding to the morphism $\hlam:\bw3\cU_4 \to \cU_4^\perp$.
Since $\det(\bw3\cU_4) \cong \cO(-3\bh)$ and $\det(\cU_4^\perp) \cong \cO(-\bh)$, it follows that $2\bh - \be$ is the relative hyperplane class for this projectivization.
On the other hand, the relative hyperplane class corresponds to the line bundle $(\bw3\cU_3)^\vee \cong \cO(h)$, hence $\be = 2\bh - h$.
Now all the relations in~\eqref{eq:pic-relations} easily follow.

The relation for $h\vert_F$  follows from~\eqref{eq:u3-bu2} and~\eqref{eq:gr26-3form-bu2} and the relation for $\bh\vert_Z$ from~\eqref{eq:bcu3-1}.
Finally, since the bundle $\cV_F$ is a subbundle in $W/\cU_3$, the relative hyperplane class of $E = \P_F(\cV_F)$ equals the restriction of the relative hyperplane class of $\P_{\cX_\lm}(W/\cU_3)$, that was just shown to be equal to $-e$.
This implies $\sv_E = -e$.
Analogously, $\cV_Z$ is a subbundle in $\bw3\cU_4$, hence the relative hyperplane class of $\BE = \P_Z(\cV_Z)$ equals the restriction of the relative hyperplane class of $\P_{\LGr_\mu(3,\BW)_\blam}(\bw3\cU_4)$, that was shown to be equal to $h$. 
This implies $\sv_E = h$.
\end{proof}

Note that, the above relations show that the fibers of $Z$ over $S$ are lines on $\LGr_\mu(3,\BW)_\blam$, so $Z$ is a scroll.

\begin{lemma}\label{lemma:intersection}
We have the following relations in the Chow rings:
\begin{align*}
\sv_E^2 &= -h_1h_2 && \text{in $\CH^2(E)$},\\
\sv_Z^2 &= -\bh_1\bh_2 && \text{in $\CH^2(Z)$},\\
\sv_\BE^2 &= (3\sv_Z + 2\bh_1 + 2\bh_2)\sv_\BE - 4\sv_Z(\bh_1 + \bh_2) && \text{in $\CH^2(\BE)$}.
\end{align*}
\end{lemma}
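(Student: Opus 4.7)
The plan is to derive each of the three identities as an instance of the standard Chow-ring relation for a projective bundle of rank $2$: for a rank $2$ bundle $\cV$ on $Y$ with tautological hyperplane class $\sv$ on $\P_Y(\cV)$, one has $\sv^2 + c_1(\cV)\sv + c_2(\cV) = 0$, equivalently $\sv^2 = c_1(\cV^\vee)\sv - c_2(\cV^\vee)$. So everything reduces to computing $c_1$ and $c_2$ of $\cV_F$, $\cV_S$, $\cV_Z$ using the descriptions in Lemma~\ref{lemma:vector-bundles}.

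For $\cV_F$, I would read off the total Chern class from the monad~\eqref{eq:cvf}:
\[
c(\cV_F) = \frac{1}{(1-h_1)(1+h_1)(1-h_2)(1+h_2)} = \frac{1}{(1-h_1^2)(1-h_2^2)},
\]
which, using $h_i^3=0$ (coming from $\P(A_1)\times\P(A_2)$), yields $c_1(\cV_F)=0$ and $c_2(\cV_F)=h_1^2+h_2^2$. To identify this with $h_1h_2$, I would use the identification $F\cong\P_{\P(A_1)}(Q_{A_1}^\vee)$, where $Q_{A_1}$ is the tautological rank $2$ quotient on $\P(A_1)$, so that $h_2$ is the relative hyperplane class and $h_1$ the pullback of the base. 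Since $c(Q_{A_1}) = 1+h_1+h_1^2$, the projective-bundle relation gives $h_2^2 - h_1h_2 + h_1^2 = 0$ in $\CH^2(F)$, hence $h_1^2+h_2^2 = h_1h_2$. Applied to $\sv_E^2=-c_2(\cV_F)$, this gives $\sv_E^2=-h_1h_2$. An entirely analogous argument works for $\cV_S$: the monad~\eqref{eq:cvs-monad} gives $c_1(\cV_S)=0$ and $c_2(\cV_S)=\bh_1^2+\bh_2^2$, and since $S$ is a sextic del Pezzo one has $\CH^2(S)\cong\Z$, so comparing degrees via~\eqref{eq:s-intersection} ($\bh_1^2=\bh_2^2=1$, $\bh_1\bh_2=2$) gives $\bh_1^2+\bh_2^2=\bh_1\bh_2$, and hence $\sv_Z^2=-\bh_1\bh_2$.

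For the third identity I would use the short exact sequence~\eqref{eq:cvz}. From~\eqref{eq:bcu3-1} the Chern roots of $\bcU_3$ are $-\sv_Z,-\bh_1,-\bh_2$, so $\bw3\bcU_3\oplus\bw2\bcU_3$ has Chern roots $-\sv_Z-\bh_1-\bh_2,\ -\sv_Z-\bh_1,\ -\sv_Z-\bh_2,\ -\bh_1-\bh_2$, while $\cO(-\bh_1)\oplus\cO(-\bh_2)$ has Chern roots $-\bh_1,-\bh_2$. Whitney's formula then yields
\[
c_1(\cV_Z) = -3\sv_Z - 2\bh_1 - 2\bh_2,
\]
and a direct expansion of $c_2(\cV_Z)=c_2(\bw3\bcU_3\oplus\bw2\bcU_3) - c_1(\cV_Z)c_1(\cO(-\bh_1)\oplus\cO(-\bh_2)) - c_2(\cO(-\bh_1)\oplus\cO(-\bh_2))$ gives, before simplification,
\[
c_2(\cV_Z) = 3\sv_Z^2 + 4\sv_Z(\bh_1+\bh_2) + \bh_1^2 + 2\bh_1\bh_2 + \bh_2^2.
\]
Substituting the base relations $\sv_Z^2=-\bh_1\bh_2$ and $\bh_1^2+\bh_2^2=\bh_1\bh_2$ (pulled back from $S$) collapses this to $c_2(\cV_Z)=4\sv_Z(\bh_1+\bh_2)$. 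The projective-bundle relation $\sv_\BE^2 = -c_1(\cV_Z)\sv_\BE - c_2(\cV_Z)$ then yields the third identity.

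The main obstacle is the $c_2(\cV_Z)$ computation in step three: it is the one place where genuine bookkeeping is unavoidable, and it requires recognising that several terms cancel precisely because of the two surface-level identities proved in steps one and two. Everything else is formal manipulation of Chern classes together with the standard description of the Chow rings of $F$ and $S$.
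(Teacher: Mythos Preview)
Your argument is correct and follows exactly the approach of the paper: the paper's proof simply says these are Grothendieck relations and that the Chern classes of $\cV_F$, $\cV_S$, $\cV_Z$ are to be computed from~\eqref{eq:cvf}, \eqref{eq:cvs-monad}, and~\eqref{eq:cvz}, which is precisely what you carry out in full detail. Your explicit verification of the auxiliary identities $h_1^2+h_2^2=h_1h_2$ in $\CH^2(F)$ and $\bh_1^2+\bh_2^2=\bh_1\bh_2$ in $\CH^2(S)$ (needed to put $c_2$ in the stated form) is a welcome elaboration of what the paper leaves implicit.
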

\begin{proof}
These are just Grothendieck relations. 
So, we only have to compute the Chern classes of the bundles $\cV_F$, $\cV_S$, and $\cV_Z$.
For this we use~\eqref{eq:cvf}, \eqref{eq:cvs-monad}, and~\eqref{eq:cvz} respectively.
\end{proof}

%



\section{Applications}\label{section:apps}

\subsection{Geometry of K\"uchle fourfolds}\label{subsection:4folds}

In this section we assume that the corresponding K\"uchle fivefold $\cX_\lm$ satisfies the generality assumptions of section~\ref{subsection:genericity-assumptions}.
Recall that a K\"uchle fourfold is a hyperplane section of a K\"uchle fivefold.

Let $H_\nu \subset \P(\bw3W)$ be a hyperplane corresponding to a 3-form $\nu$ (defined up to a 3-form of type $(\lambda \conv w) + (\mu \wedge f)$, where $w \in W$ and $f \in W^\vee$).
We denote the corresponding K\"uchle fourfold by
\begin{equation*}
\Xf_\lmn := \cX_\lm \cap H_\nu \subset \cX_\lm \subset \Gr(3,W).
\end{equation*}

\begin{theorem}
Assume the pair $(\lm)$ satisfies the generality assumptions~\eqref{assumption:lambda-mu-general}, \eqref{assumption:pencil-regular}, \eqref{assumption:m-nonzero}, and~\eqref{assumption:lgr-hyperplane} and $\nu$ is a general $3$-form on $W$.
Then there is a diagram
\begin{equation}\label{diagram:4folds}
\vcenter{\xymatrix{
& 
D \ar[dl]_p \ar[r]^i & 
\tX_\lmn \ar[dl]_\pi \ar[dr]^\bpi &
\BD \ar[l]_\bi \ar[dr]^\bp 
\\
\Sigma \ar[r] & 
\Xf_\lmn &&
\bX_\lmn &
\Gamma \ar[l]
}}
\end{equation}
where
\begin{itemize}
\item 
$\Sigma = F \cap H_\nu$ is a sextic del Pezzo surface;
\item 
$\bX_\lmn \subset \LGr_\mu(3,\BW)_\blam$ is a quadratic section containing the scroll $Z$ and with singularities along a curve $\Gamma$;
\item 
$\Gamma \subset Z \subset \bX_\lmn$ is a curve of genus $37$, a section of the map $\sigma:Z \to S$ over a smooth curve in the linear system $-4K_S$ on the sextic del Pezzo surface $S$;
\item 
the map $\pi$ is the blowup with center in $\Sigma$;
\item 
the map $\bpi$ is the blowup of the Weil divisor $Z$ on $\bX_\lmn$;
\item 
$D \cong \P_\Sigma(\cV_F\vert_\Sigma)$ is the exceptional divisor of $\pi$;
\item 
$\BD \cong \P_\Gamma(\cV_Z\vert_\Gamma)$ is the exceptional locus of $\bpi$;
\item 
the maps $i$ and $\bi$ are the embeddings of $D$ and $\BD$.
\end{itemize}
\end{theorem}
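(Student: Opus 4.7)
The plan is to obtain the diagram~\eqref{diagram:4folds} by pulling back the fivefold diagram~\eqref{diagram:fivefolds} through the intersection with $H_\nu$, and then analyze each piece using the explicit geometric descriptions from section~\ref{subsection:details}. I set $\tX_\lmn \subset \tcX_\lm$ to be the zero locus of $\nu$ viewed as a section of $\cO(h)$, which coincides with $\pi^{-1}(\Xf_\lmn)$. For generic $\nu$, Bertini applied to the smooth variety $\tcX_\lm$ gives that $\tX_\lmn$ is a smooth divisor of class $h$, and the maps $\pi$ and $\bpi$ of the diagram are obtained by restricting the corresponding maps from Theorem~\ref{theorem:main}.

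For the map $\pi\colon \tX_\lmn \to \Xf_\lmn$, recall that $\pi\colon \tcX_\lm \to \cX_\lm$ is the blowup along $F \cong \Fl(1,2;3)$ with exceptional divisor $E = \P_F(\cV_F)$. Since $h\vert_F = h_1 + h_2$ by Lemma~\ref{lemma:picard}, the restricted blowup has center $\Sigma := F \cap H_\nu$, which is a hyperplane section of $F$ in its anticanonical half-embedding. Because $\Fl(1,2;3)$ is a Fano threefold of index $2$, such a section is a sextic del Pezzo surface for generic $\nu$, and the exceptional divisor of the restricted blowup is evidently $\P_\Sigma(\cV_F\vert_\Sigma)$. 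For the map $\bpi$, I use the key relation $h = 2\bh - \be$ in $\Pic(\tcX_\lm)$ from Lemma~\ref{lemma:picard}: this means that $\tX_\lmn$ is the proper transform under $\bpi$ of a divisor on $\LGr_\mu(3,\BW)_\blam$ of class $2\bh$, which is exactly a quadratic section $\bX_\lmn := \bpi(\tX_\lmn)$. The coefficient $-1$ of $\be$ in the class of $\tX_\lmn$ shows that $\bX_\lmn$ contains the blowup center $Z$ as a Weil divisor, so that $\bpi\colon \tX_\lmn \to \bX_\lmn$ is literally the blowup of the ideal of $Z$.

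For the exceptional locus $\BD$ and the singular curve $\Gamma$, I restrict $\nu$ to $\BE = \P_Z(\cV_Z)$ using $\sv_\BE = h$ from Lemma~\ref{lemma:picard}: the restriction becomes a section of $\cO_\BE(\sv_\BE)$ which, by the projection formula, corresponds to a morphism $s\colon \cV_Z \to \cO_Z$, i.e., a section of $\cV_Z^\vee$. Setting $\Gamma := V(s) \subset Z$ and noting that $\cV_Z$ has rank $2$ and $s$ is generic, $\Gamma$ is a curve. Over $Z \setminus \Gamma$ the restriction $\bpi\vert_{\tX_\lmn}$ is an isomorphism, while over $\Gamma$ the fibers are the $\P^1$-fibers of $\BE \to Z$; smoothness of $\tX_\lmn$ together with these $\P^1$-contractions forces $\bX_\lmn$ to have singularities exactly along $\Gamma$ and identifies $\BD \cong \P_\Gamma(\cV_Z\vert_\Gamma)$. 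To identify $\Gamma$ as a section of $\sigma\colon Z \to S$ over a curve $\bGamma \in |-4K_S|$, I would push $s$ forward along $\sigma$ using the four-term sequence~\eqref{eq:exact-seq-cvz} and the description of $Z$ as $\P_S(\cV_S)$. The genus of $\Gamma$ then follows by adjunction on the sextic del Pezzo surface $S$ (where $-K_S = \bh_1 + \bh_2$ and $K_S^2 = 6$): $2g - 2 = (-4K_S)\cdot(-3K_S) = 12 K_S^2 = 72$, giving $g(\Gamma) = 37$.

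The main obstacle I anticipate is the identification in the last paragraph: verifying that $\Gamma$ really does project isomorphically via $\sigma$ onto a smooth curve in $|-4K_S|$ on $S$, and not, say, a reducible or multiple curve, nor a bisection, nor a section over a different linear system. This requires a genuine computation using the monadic resolution~\eqref{eq:cvz} of $\cV_Z$ together with an explicit parsing of how $\nu$ decomposes along $\bw3\cU_4 \cong \bw3\bcU_3 \oplus \bw2\bcU_3$ from section~\ref{subsection:hlam-z}; in particular one must check that the ambiguity in $\nu$ modulo forms of type $(\lambda \conv w) + (\mu \wedge f)$ does not affect the class of $\Gamma$. The remaining parts of the theorem are formal consequences of restricting the fivefold diagram.
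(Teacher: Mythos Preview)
Your proposal is correct and follows essentially the same route as the paper: define $\tX_\lmn$ as the preimage of $\Xf_\lmn$ in $\tcX_\lm$, use transversality of $H_\nu$ with $F$ to get the blowup $\pi$ with center the sextic del Pezzo $\Sigma$, use $h = 2\bh - \be$ to identify $\bX_\lmn$ as a quadratic section containing $Z$, and use $\sv_\BE = h$ to realize the restriction of $\nu$ to $\BE$ as a section of $\cV_Z^\vee$ whose zero locus is $\Gamma$.

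The one place where you make life harder for yourself is the identification of $\Gamma$. You propose to push the section forward along $\sigma$ using the four-term sequence~\eqref{eq:exact-seq-cvz} and worry about reducibility, multiplicity, and the ambiguity in $\nu$. The paper bypasses all of this: Lemma~\ref{lemma:intersection} already records that $c_2(\cV_Z^\vee) = 4\sv_Z(\bh_1 + \bh_2)$ in $\CH^2(Z)$, so for general $\nu$ the zero locus $\Gamma$ of a general section has exactly this class. Pushing forward by $\sigma_*$ (which sends $\sv_Z \mapsto 1$) gives $[\bGamma] = 4(\bh_1 + \bh_2) = -4K_S$, and the fact that the class is of the form $\sv_Z \cdot \sigma^*D$ (linear in $\sv_Z$) shows $\Gamma$ meets a general fiber of $\sigma$ over $\bGamma$ in a single point, i.e.\ is a section. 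So the ``main obstacle'' you flag dissolves once you cite the Chern class computation already done in Lemma~\ref{lemma:intersection}; no parsing of $\nu$ along the decomposition of $\bw3\cU_4$ is needed.
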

%
%
\begin{proof}
We define $\tX_\lmn = \tcX_\lm \times_{\cX_\lm} \Xf_\lmn$ to be the full preimage of $\Xf_\lmn$ under the blowup $\pi:\tcX_\lm \to \cX_\lm$.
Recall that the center of this blowup is the flag variety $F \subset \P(A_1) \times \P(A_2)$ given by the pairing between $A_1$ and $A_2$ induced by the 2-form $\mu$.
Clearly, the hyperplane section $F \cap H_\nu$ is given by one more $(1,1)$-divisor in $\P(A_1) \times \P(A_2)$, corresponding to the pairing given by the 2-form $\nu \conv w_0$ on $\BW$.
So, assuming that the pencil of pairings between $A_1$ and $A_2$ given by the pencil of 2-forms $\langle \mu, \nu \conv w_0 \rangle$ is regular (this is the first genericity assumption),
we conclude that $\Sigma := F \cap H_\nu$ is a smooth del Pezzo surface of degree 6.
Moreover, in this case the intersection $F \cap H_\nu$ is transversal, hence the map $\tX_\lmn \to \Xf_\lmn$ is the blowup with center in $\Sigma$.
We denote the exceptional divisor of this blowup by $D$.
Clearly, we have
\begin{equation*}
D = \Sigma \times_F E \cong \P_\Sigma(\cV_F\vert_\Sigma).
\end{equation*}
Abusing notation we denote by $\pi$, $p$, and $i$ the maps $\tX_\lmn \to \Xf_\lmn$, $D \to \Sigma$ and $D \to \tX_\lmn$ induced by the same named maps in~\eqref{diagram:fivefolds}.

Now recall that by Lemma~\ref{lemma:picard} in the Picard group of $\tcX_\lm$ we have a relation $h = 2\bh-\be$. 
It means that the pullback to $\tcX_\lm$ of the hyperplane $H_\nu$ corresponds to a quadratic section $\bX_\lmn$ of $\LGr_\mu(3,\BW)_\blam$ containing $Z$.
Again by Lemma~\ref{lemma:picard} the restriction of $h$ to the divisor $\BE = \P_Z(\cV_Z)$ corresponds to a section of the vector bundle $\cV_Z^\vee$. 
Since $c_2(\cV_Z^\vee) = 4\sv_Z(\bh_1 + \bh_2)$ by Lemma~\ref{lemma:intersection}, it follows that for general $\nu$ this section vanishes on a curve $\Gamma \subset Z$ with the class $4\sv_Z(\bh_1 + \bh_2)$ in $\CH^2(Z)$.
The projection of the curve to $S$ is a general curve $\bGamma$ in the linear system $4(\bh_1 + \bh_2) = -4K_S$, hence of genus~37, and $\Gamma$ gives a section of $\sigma$ over $\bGamma$.
\end{proof}

The description of the K\"uchle fourfold $\Xf_\lmn$ provided by this Theorem is not as useful as the description of the fivefold in Theorem~\ref{theorem:main}, because of the singularities. 
On the other hand, it looks as if the contraction $\bpi$ is a flopping contraction, so it is interesting to consider the flop $(\tX_\lmn)^+$ of $\tX_\lmn$ and continue the two-rays game for it.

\subsection{The Chow motive of K\"uchle fivefolds}\label{subsection:motive-5folds}

For a smooth projective variety $X$ we denote by $\Mot(X)$ its Chow motive, and by $\Lef$ the Lefschetz motive.
We say that a motive is {\sf of Lefschetz type} if it is a direct sum of powers of the Lefschetz motive.
Note that we consider motives with integer coefficients.

To compute the motive of $\cX_\lm$ we use the birational transformation of Theorem~\ref{theorem:main}
and the following formula for the motive of the hyperplane section of the Lagrangian Grassmannian:
\begin{equation}\label{eq:motive-lgrh}
\Mot(\LGr_\mu(3,\BW)_\blam) = \one \oplus \Lef \oplus \Lef^2 \oplus \Lef^3 \oplus \Lef^4 \oplus \Lef^5.
\end{equation}
We prove this formula in section~\ref{section:hyperplane-sgr} by a geometric argument (see Corollary~\ref{corollary:motive-lgr-blam}), 
and now we use it to prove the following result.


\begin{theorem}\label{theorem:motive}
The Chow motive of $\cX_\lm$ is of Lefschetz type:
\begin{equation*}
\Mot(\cX_\lm) = \one \oplus \Lef \oplus (\Lef^2)^{\oplus 4} \oplus (\Lef^3)^{\oplus 4} \oplus \Lef^4 \oplus \Lef^5.
\end{equation*}
\end{theorem}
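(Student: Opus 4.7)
The plan is to use the blowup formula for Chow motives twice: applying it to the two blowup maps $\pi\colon \tcX_\lm \to \cX_\lm$ and $\bpi\colon \tcX_\lm \to \bcX_\lm$ of Theorem~\ref{theorem:main}. Since the centers $F \subset \cX_\lm$ and $Z \subset \bcX_\lm$ are both smooth of codimension~$2$, the blowup formula produces canonical isomorphisms
\begin{equation*}
\Mot(\cX_\lm) \oplus \Mot(F) \otimes \Lef
\cong \Mot(\tcX_\lm)
\cong \Mot(\bcX_\lm) \oplus \Mot(Z) \otimes \Lef.
\end{equation*}
Thus, up to cancellation by Lefschetz summands, $\Mot(\cX_\lm)$ is determined by the motives of $\bcX_\lm$, $Z$, and $F$, all of which are accessible.

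Next, I would compute each of these three motives separately. For $\bcX_\lm$, use formula~\eqref{eq:motive-lgrh} proved in section~\ref{section:hyperplane-sgr}, giving $\one \oplus \Lef \oplus \Lef^2 \oplus \Lef^3 \oplus \Lef^4 \oplus \Lef^5$. For $F \cong \Fl(1,2;A_1)$, the Bruhat (cell) decomposition of this complete flag threefold yields the motive $\one \oplus \Lef^{\oplus 2} \oplus (\Lef^2)^{\oplus 2} \oplus \Lef^3$. For $Z \cong \P_S(\cV_S)$, the projective bundle formula gives $\Mot(Z) = \Mot(S) \oplus \Mot(S)\otimes\Lef$, and since $S$ is a sextic del Pezzo surface (a blowup of $\P^2$ in three points, see Remark~\ref{remark:s}), its motive is $\one \oplus \Lef^{\oplus 4} \oplus \Lef^2$. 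Together these give
\begin{equation*}
\Mot(Z) = \one \oplus \Lef^{\oplus 5} \oplus (\Lef^2)^{\oplus 5} \oplus \Lef^3.
\end{equation*}

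Substituting these into the two expressions for $\Mot(\tcX_\lm)$ and collecting terms yields
\begin{equation*}
\Mot(\cX_\lm) \oplus \bigl(\Lef \oplus (\Lef^2)^{\oplus 2} \oplus (\Lef^3)^{\oplus 2} \oplus \Lef^4\bigr)
\cong
\one \oplus \Lef^{\oplus 2} \oplus (\Lef^2)^{\oplus 6} \oplus (\Lef^3)^{\oplus 6} \oplus (\Lef^4)^{\oplus 2} \oplus \Lef^5.
\end{equation*}
The claimed formula then follows by canceling the Lefschetz summands on the left-hand side.

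The main obstacle in this plan is the final cancellation step: with integer coefficients the Chow motive category does not satisfy Krull--Schmidt in general. However, the Lefschetz motive $\Lef$ is invertible in Chow motives, so cancellation of a finite direct sum of Lefschetz motives (i.e.\ implications of the form $A \oplus \Lef^i \cong B \oplus \Lef^i \Rightarrow A \cong B$) does hold; this is the one nontrivial categorical fact I would need to cite carefully. The remaining computations are all routine applications of standard formulas (blowup, projective bundle, cell decomposition), so once the cancellation is justified the theorem follows immediately.
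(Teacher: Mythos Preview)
Your proposal is correct and follows essentially the same approach as the paper: both arguments use the two blowup formulas from Theorem~\ref{theorem:main} together with the projective bundle formula for $Z \to S$, the cell decompositions of $F$ and $S$, and formula~\eqref{eq:motive-lgrh}, arriving at the same expression for $\Mot(\tcX_\lm)$. The only cosmetic difference is the justification of the last step: the paper phrases it as ``a direct summand of a motive of Lefschetz type is itself of Lefschetz type,'' whereas you invoke cancellation via invertibility of $\Lef$; both are valid and amount to the same thing.
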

\begin{proof}
Note that the integral motive of $S$ is of Lefschetz type, since $S$ is isomorphic to a blowup of $\P^2$ in three points.
Explicitly, we have
\begin{equation*}
\Mot(S) = \one  \oplus (\Lef)^{\oplus 4} \oplus \Lef^2.
\end{equation*}
Applying the projective bundle formula to $Z \cong \P_S(\cV_S)$, we deduce
\begin{equation*}
\Mot(Z) = \Mot(S) \oplus \Mot(S) \otimes \Lef = \one  \oplus (\Lef)^{\oplus 5}  \oplus (\Lef^2)^{\oplus 5} \oplus \Lef^3.
\end{equation*}
Applying~\eqref{eq:motive-lgrh} and the blowup formula to the morphism $\bpi:\tcX_\lm \to \LGr_\mu(3,\BW)_\blam$, we deduce
\begin{multline*}
\Mot(\tcX_\lm) = \Mot(\LGr_\mu(3,\BW)_\blam) \oplus \Mot(Z) \otimes \Lef = 
\one \oplus (\Lef)^{\oplus 2} \oplus (\Lef^2)^{\oplus 6} \oplus (\Lef^3)^{\oplus 6} \oplus (\Lef^4)^{\oplus 2} \oplus \Lef^5.
\end{multline*}
On the other hand, the blowup formula for the morphism $\pi:\tcX_\lm \to \cX_\lm$ gives
\begin{equation*}
\Mot(\tcX_\lm) = \Mot(\cX_\lm) \oplus \Mot(F) \otimes \Lef.
\end{equation*}
Since a direct summand of a motive of Lefschetz type is itself a motive of Lefschetz type, and $\Mot(F) = \one  \oplus (\Lef)^{\oplus 2}  \oplus (\Lef^2)^{\oplus 2} \oplus \Lef^3$,
we deduce the required formula for $\Mot(\cX_\lm)$.
\end{proof}

Note, that a similar result for the motive with rational coefficients follows immediately from the existence of an exceptional collection (see~\cite{marcolli2015exceptional}).
However, no analogue of this result for integral coefficients is known, so some geometric argument to prove this seems necessary

The following is an immediate consequence of Theorem~\ref{theorem:motive}.

\begin{corollary}
The Hodge diamond of $\cX_\lm$ is diagonal with 
\begin{equation*}
h^{0,0}(\cX_\lm) = h^{1,1}(\cX_\lm) = h^{4,4}(\cX_\lm) = h^{5,5}(\cX_\lm) = 1,
\qquad 
h^{2,2}(\cX_\lm) = h^{3,3}(\cX_\lm) = 4,
\end{equation*}
and the Chow groups of the K\"uchle fivefold are free abelian groups
\begin{equation*}
\CH^0(\cX_\lm) \cong \CH^1(\cX_\lm) \cong \Z,  \
\CH^2(\cX_\lm) \cong \CH^3(\cX_\lm) \cong \Z^4,\
\CH^4(\cX_\lm) \cong \CH^5(\cX_\lm) \cong \Z.
\end{equation*}
\end{corollary}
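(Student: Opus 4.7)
The plan is to deduce the corollary directly from Theorem~\ref{theorem:motive}, using the fact that a Chow motive of Lefschetz type is a direct sum of twisted Tate motives whose Chow groups and Hodge structures are known tautologically. No further geometric input is required beyond the decomposition
\[
\Mot(\cX_\lm) = \one \oplus \Lef \oplus (\Lef^2)^{\oplus 4} \oplus (\Lef^3)^{\oplus 4} \oplus \Lef^4 \oplus \Lef^5.
\]

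First, I would recall that $\CH^i(\Lef^j) = \Z$ if $i=j$ and $0$ otherwise, and that this extends additively to direct sums. Applying $\CH^\bullet$ to the displayed decomposition reads off the groups:
\[
\CH^0 = \Z,\quad \CH^1=\Z,\quad \CH^2=\Z^{\oplus 4},\quad \CH^3=\Z^{\oplus 4},\quad \CH^4=\Z,\quad \CH^5=\Z,
\]
each of which is manifestly free abelian. In particular this requires that the motive is defined with integer coefficients, which is exactly what Theorem~\ref{theorem:motive} provides.

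Next, for the Hodge numbers, I would invoke the fact that the Hodge realization functor sends $\Lef^j$ to the rank-one Hodge structure concentrated in bidegree $(j,j)$. Summing contributions from the Lefschetz summands yields a diagonal Hodge diamond with $h^{p,p}$ equal to the multiplicity of $\Lef^p$ in the decomposition. This produces exactly
\[
h^{0,0}=h^{1,1}=h^{4,4}=h^{5,5}=1,\qquad h^{2,2}=h^{3,3}=4,
\]
with all off-diagonal entries vanishing.

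There is essentially no obstacle at this stage: everything is formal once Theorem~\ref{theorem:motive} is in hand. The only care required is to record that the Chow motive is taken with integer coefficients (so that the Chow group statement is integral, not merely rational), and to note that Poincaré duality on $\cX_\lm$ is reflected in the palindromic multiplicity pattern $(1,1,4,4,1,1)$ of the Lefschetz summands, which is a consistency check rather than an input. Thus the corollary follows in a few lines from Theorem~\ref{theorem:motive}.
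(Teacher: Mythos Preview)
Your proposal is correct and matches the paper's approach exactly: the paper simply states that the corollary is an immediate consequence of Theorem~\ref{theorem:motive} and gives no further argument. You have merely spelled out the formal steps (reading off Chow groups and Hodge numbers from the Lefschetz-type decomposition), which is precisely what the paper leaves implicit.
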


One can use the description of Theorem~\ref{theorem:main} to find explicit generators of the Chow groups.






\section{A hyperplane section of the Lagrangian Grassmannian}\label{section:hyperplane-sgr}

The goal of this section is to describe the geometry of a smooth hyperplane section of the Lagrangian Grassmannian $\LGr(3,6)$.
In particular, we show that its motive is a sum of Lefschetz motives, and describe its Hilbert scheme of lines.
See~\cite{iliev2011fano} for another approach.


As usual, we denote by $\BW$ a vector space of dimension 6 and by $\mu \in \bw2\BW^\vee$ a symplectic form.
The linear span of the Lagrangian Grassmannian $\LGr_\mu(3,\BW)$ in $\P(\bw3\BW)$ is the subspace 
\begin{equation*}
\bww3\mu\BW := \Ker(\bw3\BW \xrightarrow{\ -\conv\mu\ } \BW).
\end{equation*}
The space $\bw3\BW$ has a canonical direct sum decomposition
$\bw3\BW = \bww3\mu\BW \oplus \BW$,
where the second summand is embedded by the wedge product with the bivector $\mu^{-1} \in \bw2\BW$.
Consequently, there is a direct sum decomposition
\begin{equation*}
\bw3\BW^\vee = \bww3\mu\BW^\vee \oplus \BW^\vee.
\end{equation*}
For a 3-form $\blam \in \bw3\BW^\vee$, the hyperplane section $\LGr_\mu(3,\BW)_\blam$ depends only on the projection of $\blam$ to the summand $\bww3\mu\BW^\vee$, so, we may safely assume that 
\begin{equation}
\blam \in \bww3\mu\BW^\vee.
\end{equation} 
We will keep this assumption from now on.

Consider a 7-dimensional vector space 
\begin{equation*}
W = \k \oplus \BW, 
\end{equation*}
and pack the data of the 2-form $\mu$ and the 3-form $\blam$ on $\BW$ into a single 3-form $\xi$ on $W$ as follows.
Denote by $w_0 \in W$ the base vector of the summand $\k$ and by $w_0^\vee$ a linear function on~$W$ which is zero on $\BW$ and satisfies $w_0^\vee(w_0) = 1$, and set 
\begin{equation}\label{eq:xi-def}
\xi := \blam + w_0^\vee \wedge \mu \in \bw3W^\vee.
\end{equation}
The following observation is crucial.

\begin{lemma}\label{lemma:xi-general}
If $\mu \in \bw2\BW^\vee$ is non-degenerate and $\blam \in \bww3\mu\BW^\vee$ is a $3$-form such that the hyperplane section $\LGr_\mu(3,\BW)_\blam$ of the Lagrangian Grassmannian is smooth, 
then the $3$-form $\xi \in \bw3W^\vee$ defined by~\eqref{eq:xi-def} is general.
\end{lemma}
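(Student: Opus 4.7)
The plan is a contrapositive argument using an analog of Lemma~\ref{lemma:4form-general} for 3-forms on the 7-dimensional space $W$: the form $\xi$ is general iff its associated Pfaffian quadric $\bQ_\xi \subset \P(W)$ --- cut out by the Pfaffian of the skew morphism $T_{\P(W)} \to \Omega_{\P(W)}(3)$ induced by $\xi$ --- is a smooth quadric. I would first make $\bq_\xi$ explicit. Writing $w = a w_0 + \bar w$ and $\xi = \blam + w_0^\vee \wedge \mu$, the $6 \times 6$ block of $\xi \conv w$ on $\BW$ is $A := a\mu + \blam \conv \bar w$, and from the sub-Pfaffian identity $\Pf(A) = a \cdot \bq_\xi(w)$ (using that $\bar w$ lies in the kernel of $\blam \conv \bar w$, hence $\Pf(\blam \conv \bar w) = 0$) one obtains, after expanding $\Pf(a\mu + \blam \conv \bar w)$ in powers of $a$,
\[
\bq_\xi(a,\bar w) \;=\; \Pf(\mu)\, a^2 \;+\; L(\bar w)\, a \;+\; Q(\bar w),
\]
with $L(\bar w) = \Pf(\mu) \cdot (\mu^{-1} \conv \blam)(\bar w)$ vanishing by the primitivity $\mu^{-1} \conv \blam = 0$. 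Since $\Pf(\mu) \ne 0$, the quadric $\bQ_\xi$ is smooth iff $Q$ is non-degenerate on $\BW$; in particular $\bq_\xi(w_0) = \Pf(\mu) \ne 0$, so $[w_0]$ lies in the open complement of $\bQ_\xi$.

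The remaining and main task is to show the contrapositive: if $Q$ has a nonzero radical vector $\bar w_*$ --- equivalently $[\bar w_*]$ is a singular point of $\bQ_\xi$, so $\xi \conv \bar w_*$ has rank at most $2$ on $W$ --- then $\LGr_\mu(3,\BW)_\blam$ is singular. I would construct a singular point of the hyperplane section of the form $U_3 = \k \bar w_* \oplus V_2$, where $V_2$ is a Lagrangian 2-plane in the 4-dimensional symplectic quotient $\bar w_*^{\perp_\mu}/\k \bar w_*$. The vanishing condition $d\blam|_{U_3} = 0$, in a Darboux basis $u_1,u_2,u_3;v_1,v_2,v_3$ adapted to $U_3 = \langle u_i \rangle$, is equivalent to antisymmetry of the $3 \times 3$ matrix $c_{ij} = \blam(v_j, u_{[i]_1}, u_{[i]_2})$; as $V_2$ varies in the $3$-dimensional Lagrangian Grassmannian $\LGr(2,4)$, this becomes a linear system whose solvability encodes the radical condition on $\bar w_*$.

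The main obstacle is this last step, matching the algebraic degeneracy $\bar w_* \in \mathrm{rad}(Q)$ to the geometric singularity of $\LGr_\mu(3,\BW)_\blam$. I expect a case analysis based on the rank of $\blam \conv \bar w_*$ (which must be $0$ or $2$, since for $6\times 6$ skew forms the singularities of the Pfaffian locus correspond to rank-drop to at most $2$): in the rank-$0$ case, $U_3$ is automatically $\blam$-isotropic for any Lagrangian $V_2$ and a dimension count on $\LGr(2,4)$ yields one with the required antisymmetry; in the rank-$2$ case, the image condition $\mu \conv \bar w_* \in \mathrm{Im}(\blam \conv \bar w_*)$ (which holds at a singular point of $\bQ_\xi$) provides the analogous leverage.
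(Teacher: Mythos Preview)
Your approach is genuinely different from the paper's and considerably harder. The paper's proof is essentially one line: it invokes a known criterion (from \cite{landsberg-manivel} or \cite{iliev-ranestad}) that $\blam \in \bww3\mu\BW^\vee$ defines a smooth hyperplane section of $\LGr_\mu(3,\BW)$ if and only if there is a \emph{Lagrangian} direct sum decomposition $\BW = L_1 \oplus L_2$ with $\blam = \blam_1 + \blam_2$, $\blam_i$ a generator of $\bw3L_i^\perp$. In adapted bases this gives $\blam = x_{123} + x_{456}$ and $\mu = x_{16} + x_{25} + x_{34}$, whence $\xi = x_{123} + x_{456} + x_{016} + x_{025} + x_{034}$, which is general by Lemma~\ref{lemma:4form-general}(v). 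No contrapositive, no Pfaffian computation, no case analysis.

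Your route --- computing $\bq_\xi$ and then constructing by hand a singular Lagrangian $U_3$ from a radical vector $\bar w_*$ of $Q$ --- is in principle workable, but the crucial step is only a sketch. You have not actually shown that a radical vector of $Q$ produces a singular point of $\LGr_\mu(3,\BW)_\blam$; you say ``I expect a case analysis'' and outline the two rank cases, but neither is carried through. In the rank-$0$ case your dimension count on $\LGr(2,4)$ needs the antisymmetry condition to cut down by at most $3$, which is not automatic; in the rank-$2$ case you have not exhibited the Lagrangian $V_2$ at all. Also, your identification of $\bq_\xi$ with the $6\times 6$ Pfaffian on $\BW$ glosses over the off-diagonal block: the $2$-form $\xi \conv w$ restricted to $W$ has a nonzero $w_0$-row equal to $\mu \conv \bar w$, so the degeneracy condition on $W/\k w$ is not literally $\Pf(A) = 0$ unless you first change basis to kill that row, and you should check this does not affect your formula for~$Q$.

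In short, what you are attempting is to reprove from scratch the smoothness criterion that the paper simply quotes. If you want to complete your argument, the cleanest fix is to look up the cited criterion and use it directly; otherwise you must close the gap in the contrapositive step, and that is where the real content lies.
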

\begin{proof}
There is a well-known necessary and sufficient condition for a 3-form $\blam \in \bww3\mu\BW^\vee$ to give a smooth hyperplane section (see~\cite{landsberg-manivel} or~\cite{iliev-ranestad}).
It is in fact equivalent to existence of a {\em Lagrangian} (with respect to the 2-form $\mu$) direct sum decomposition
\begin{equation*}
\BW = L_1 \oplus L_2
\end{equation*}
such that $\blam = \blam_1 + \blam_2$ with $\blam_i$ being generators of the subspace $\bw3L_1^\perp \subset \bww3\mu\BW^\vee$.
Choosing the bases $\{e_1,e_2,e_3\}$ and $\{e_4,e_5,e_6\}$ of the vector spaces $L_i$ appropriately, we can thus assume that
\begin{equation*}
\blam = x_{123} + x_{456}
\qquad\text{and}\qquad 
\mu = x_{16} + x_{25} + x_{34}.
\end{equation*}
Setting $e_0 = w_0$, so that $w_0^\vee = x_0$, we then have
\begin{equation*}
\xi = x_{123} + x_{456} + x_{016} + x_{025} + x_{034},
\end{equation*}
hence $\xi$ is general by Lemma~\ref{lemma:4form-general}(v).
\end{proof}

Recall that if a 3-form $\xi \in \bw3W^\vee$ is general, then its stabilizer in $\PGL(W)$ is the simple algebraic group $\Gtwo$,
and one can define its homogeneous spaces 
\begin{equation*}
\bQ_\xi \subset \P(W)
\qquad\text{and}\qquad 
\Gr_\xi(2,W) \subset \Gr(2,W).
\end{equation*} 
The quadric $\bQ_\xi$ parameterizes all vectors $w \in W$ such that the rank of the 2-form $\xi \conv w$ is less than 6,
and $\Gr_\xi(2,W)$ parameterizes all 2-subspaces $U_2 \subset W$ that are annihilated by $\xi$.

\begin{remark}\label{remark:w0-notin-gr}
If the 3-form $\xi$ is defined by~\eqref{eq:xi-def} then $w_0 \not\in \bQ_\xi$. Indeed, this is immediate since 
\begin{equation}\label{eq:xi-w0}
\xi \conv w_0 = \mu
\end{equation}
has rank 6. 
In particular, by Remark~\ref{remark:gr27-g2} the vector $w_0$ is not contained in any $U_2 \in \Gr_\xi(2,W)$.
\end{remark}

The composition $\bcU_2 \hookrightarrow \BW \otimes \cO \xrightarrow{\ \mu\ } \BW^\vee \otimes \cO \twoheadrightarrow \bcU_2^\vee$ is zero on $\LGr_\mu(2,\BW)$,
hence the composition of the first two arrows factors through the subbundle $\bcU_2^\perp \subset \BW^\vee \otimes \cO$. 
This allows to consider $\bcU_2$ as a subbundle in $\bcU_2^\perp$, and to define the quotient bundle $\bcU_2^\perp/\bcU_2$.
Analogously, the composition $\bw2\bcU_2 \hookrightarrow \bw2\BW \otimes \cO \xrightarrow{\ \blam\ } \BW^\vee \otimes \cO \twoheadrightarrow \bcU_2^\vee$ is zero on the hyperplane section $\LGr_\mu(2,\BW)_\blam$,
hence the composition of the first two arrows again factors through the subbundle $\bcU_2^\perp \subset \BW^\vee \otimes \cO$. 
Therefore, the composition
\begin{equation*}
\cO(-1) \cong \bw2\bcU_2 \xrightarrow{\ \blam\ } \bcU_2^\perp \twoheadrightarrow \bcU_2^\perp/\bcU_2
\end{equation*}
defines a global section of the vector bundle $(\bcU_2^\perp/\bcU_2)(1)$ on $\LGr_\mu(2,\BW)$. 
We denote by
\begin{equation*}
D_{\blam,\mu} \subset \LGr_\mu(2,\BW)
\end{equation*}
its zero locus.

\begin{proposition}\label{proposition:dlm-grxi}
Let $W = \k \oplus \BW$ and let $\xi$ be the $3$-form on $W$ defined by~\eqref{eq:xi-def}.
If $\xi$ is general then
\begin{equation*}
D_{\blam,\mu} \cong \Gr_\xi(2,W)
\end{equation*}
is the $\Gtwo$-adjoint variety corresponding to the $3$-form $\xi$. In particular, $D_{\blam,\mu}$ is a smooth fivefold.
\end{proposition}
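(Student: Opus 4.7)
The plan is to exhibit mutually inverse morphisms between the two sides, using the decompositions $W = \k w_0 \oplus \BW$ and $\xi = \blam + w_0^\vee \wedge \mu$.

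By Lemma~\ref{lemma:xi-general} the form $\xi$ is general, so $\Gr_\xi(2,W)$ is the smooth $\Gtwo$-adjoint fivefold, and by Remark~\ref{remark:w0-notin-gr} no $U_2 \in \Gr_\xi(2,W)$ contains $w_0$. Hence the composition $\cU_2 \hookrightarrow W \otimes \cO \twoheadrightarrow \BW \otimes \cO$ is a subbundle inclusion, producing a morphism $\alpha\colon \Gr_\xi(2,W) \to \Gr(2,\BW)$. The first step is to show that $\alpha$ factors through $D_{\blam,\mu}$. For this I pick $u_1,u_2 \in U_2$, write $u_i = c_i w_0 + \bar u_i$ with $\bar u_i \in \BW$, $c_i \in \k$, and evaluate the identity $\xi \conv (u_1 \wedge u_2) = 0 \in W^\vee$ separately at $w_0$ and at $\bar w \in \BW$. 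Using $\xi = \blam + w_0^\vee \wedge \mu$ this yields
\begin{equation*}
\mu(\bar u_1, \bar u_2) = 0
\qquad\text{and}\qquad
\blam(\bar u_1, \bar u_2, \cdot) = c_2 \mu(\bar u_1, \cdot) - c_1 \mu(\bar u_2, \cdot) \in \BW^\vee.
\end{equation*}
The first equation says $\pr(U_2)$ is $\mu$-isotropic, so $\alpha$ lands in $\LGr_\mu(2,\BW)$. The right hand side of the second lies in $\mu(\pr(U_2))$, i.e.\ in the subbundle $\bcU_2 \hookrightarrow \bcU_2^\perp$ defined by $\mu$; hence the defining section of $(\bcU_2^\perp/\bcU_2)(1)$ vanishes along $\alpha(\Gr_\xi(2,W))$, so $\alpha$ factors through $D_{\blam,\mu}$.

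For the inverse $\beta$, on $D_{\blam,\mu}$ the vanishing of the defining section lifts the $\blam$-contraction $\bw2\bcU_2 \to \bcU_2^\perp$ to a morphism $\phi\colon \bw2\bcU_2 \to \bcU_2$. Via the canonical isomorphism $\cE \otimes \det\cE^\vee \cong \cE^\vee$ available for any rank-$2$ bundle $\cE$, this translates into a morphism $g\colon \bcU_2 \to \cO$, and I let $\cU_2 \subset W \otimes \cO = \cO w_0 \oplus \BW \otimes \cO$ be the image of the bundle map $(g \cdot w_0,\id)\colon \bcU_2 \to W \otimes \cO$. Running the displayed computation in reverse shows that $\xi$ annihilates $\cU_2$, so $\cU_2$ classifies a morphism $\beta\colon D_{\blam,\mu} \to \Gr_\xi(2,W)$. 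By construction $\pr$ recovers $\bcU_2$ from $\cU_2$, and the linear form reconstructed from $\alpha(U_2)$ is exactly the form whose graph is $U_2$, so $\alpha \circ \beta = \id$ and $\beta \circ \alpha = \id$. Smoothness of $D_{\blam,\mu}$ is then inherited from the smooth $\Gtwo$-adjoint variety.

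The main technical point is the globalization in the construction of $\beta$, in particular making precise the identification $\Hom(\bw2\bcU_2,\bcU_2) \cong \bcU_2^\vee$ via the rank-$2$ identity and checking that the resulting $\cU_2$ is annihilated by $\xi$ as a subbundle morphism (not just fibrewise). The computations of the second paragraph give the pointwise statement directly, and since everything is natural in the tautological bundle the same formulas define the bundle morphism globally.
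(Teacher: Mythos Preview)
Your proof is correct, and the core computation --- that a $2$-plane $U_2 \subset W$ is $\xi$-annihilated if and only if its projection $\BU_2$ to $\BW$ is $\mu$-isotropic and $\blam \conv \bw2\BU_2$ lands in $\mu(\BU_2) \subset \bcU_2^\perp$ --- is exactly the one the paper performs. The packaging, however, is genuinely different.

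The paper embeds $\LGr_\mu(2,\BW)$ into $\Gr(3,W)_\xi$ via $\BU_2 \mapsto \k w_0 \oplus \BU_2$, defines a morphism $\hxi\colon \bw2\cU_3 \to \cU_3^\perp$ analogous to the map $\hlam$ used elsewhere, and identifies $D_{\blam,\mu}$ with the first degeneracy locus $D_1(\hxi)$, while $\Gr_\xi(2,W)$ is recovered as the scheme of decomposable kernel bivectors; the argument that $D_2(\hxi) = \emptyset$ goes through a pencil-of-planes observation. Your route stays on $\Gr(2,\cdot)$ throughout and constructs explicit mutually inverse morphisms: $\alpha$ is just projection along $w_0$, and $\beta$ is built by lifting the factorization $\bw2\bcU_2 \to \bcU_2$ through the rank-$2$ identity $\Hom(\det\bcU_2,\bcU_2) \cong \bcU_2^\vee$ to a linear functional $g$, then taking the graph of $g$. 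This is more elementary --- no degeneracy loci, no auxiliary $\Gr(3,W)$ --- and makes the isomorphism completely explicit; the paper's framing has the advantage of reusing the $\hlam$-type machinery already developed in Section~\ref{subsection:zero-locus-lambda}, so the argument is uniform with the rest of the paper. One small point worth making explicit in your write-up: the sign in the rank-$2$ identity should be fixed so that $\phi(\bar u_1 \wedge \bar u_2) = g(\bar u_2)\bar u_1 - g(\bar u_1)\bar u_2$, which is what makes the reverse computation close up with $g(\bar u_i) = c_i$.
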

\begin{proof}
Consider the embedding $w_0:\LGr_\mu(2,\BW) \to \Gr(3,W)$ defined by taking a subspace $\BU_2 \subset \BW$ to the subspace $U_3 = \k w_0 \oplus \BU_2 \subset W$.
It follows from~\eqref{eq:xi-w0} that the image lies in the hyperplane section of $\Gr(3,W)$ given by the 3-form $\xi$
and in fact is identified with the subvariety of this hyperplane section $\Gr(3,W)_\xi$ parameterizing all 3-subspaces containing $w_0$.

Define on $\Gr(3,W)_\xi$ a morphism
\begin{equation*}
\hxi:\bw2\cU_3 \to \cU_3^\perp,
\end{equation*}
analogously to the definition of the morphism $\hlam$ in~\eqref{eq:map-hlam}. 
We restrict this morphism to $\LGr_\mu(2,\BW)$ and consider its degeneracy loci $D_k(\hxi) \subset \LGr_\mu(2,\BW)$.
We will show that $D_2(\hxi) = \emptyset$ and $D_1(\hxi) \cong \Gr_\xi(2,W)$.

Indeed, assume first that $U_3 \subset W$ is a $\xi$-isotropic subspace containing $w_0$ such that the corank of the map $\hxi$ at~$U_3$ is~1.
This means that the map $\hxi:\bw2U_3 \to U_3^\perp$ is not injective. 
Then its kernel is generated by a bivector in $U_3$, which is necessarily decomposable, and thus corresponds to a 2-subspace $U_2 \subset U_3$.
Then the condition that $\bw2U_2 \subset \bw2U_3$ is in the kernel of $\hxi$ means that $\xi$ annihilates~$U_2$.
Therefore, $U_2$ is a point of the $\Gtwo$-adjoint variety $\Gr_\xi(2,W)$.
Since $U_3$ also contains $w_0$ and $w_0 \not\in U_2$ by Remark~\ref{remark:w0-notin-gr}, it follows that
\begin{equation*}
U_3 = \k w_0 \oplus U_2.
\end{equation*}
Thus $D_1(\hxi)$ parameterizes all subspaces representable as the sum of the line generated by $w_0$ and a 2-subspace $U_2$ annihilated by $\xi$.
In particular, $D_1(\hxi)$ is equal to the the image of the regular map $\Gr_\xi(2,W) \to \Gr(2,\BW)$ induced by the linear projection $\pr:W \to \BW$ along $w_0$.

Now assume that the corank of $\hxi$ at $U_2$ is 2 or more. 
Then the subspace $U_3$ contains a pencil of 2-dimensional subspaces $U_2 \subset U_3$ annihilated by $\xi$.
Clearly, at least one of subspaces in the pencil contains $w_0$, which contradicts Remark~\ref{remark:w0-notin-gr}.
This shows that $D_2(\hxi) = \emptyset$. Moreover, this also shows that for $U_3 \in D_1(\hxi)$ the subspace $U_2 \subset U_3$ annihilated by $\xi$ is unique,
hence the map $\pr:\Gr_\xi(2,W) \to D_1(\hxi) \subset \LGr_\mu(2,\BW)$ is an isomorphism.

Now, finally, let us show that $D_1(\hxi) = D_{\blam,\mu}$. Indeed, assume $\BU_2 \subset \BW$ is a 2-subspace, such that $\hxi$ is degenerate at $U_3 = \k w_0 \oplus \BU_2$.
If $\{u',u''\}$ is a basis of $\BU_2$ then $\{w_0\wedge u', w_0\wedge u'', u' \wedge u''\}$ is a basis of $\bw2U_3$ and by~\eqref{eq:xi-def} we have
\begin{equation*}
\begin{aligned}
\xi \conv (w_0\wedge u') &= \mu \conv u',\\
\xi \conv (w_0\wedge u'') & = \mu \conv u'',\\
\xi \conv (u' \wedge u'') &= \blam \conv (u' \wedge u'') + \mu(u',u'')w_0^\vee.
\end{aligned}
\end{equation*}
Since the 2-form $\mu$ is nondegenerate, the two linear functions $\mu\conv u'$ and $\mu \conv u''$ are linearly independent.
Moreover, they vanish on $w_0$.
Hence the degeneracy condition means that $\mu(u',u'') = 0$ and the linear function $\blam \conv (u' \wedge u'')$ is a linear combination of $\mu \conv u'$ and $\mu \conv u''$.
The first means that $\BU_2$ is $\mu$-isotropic, and the second means that $\blam$, considered as a section of $(\bcU_2^\perp/\bcU_2)(1)$, vanishes at $\BU_2$.
Thus $D_1(\xi) \subset D_{\blam,\mu}$. 
The converse statement can be proved by the same computation.
%
\end{proof}

Now we are ready to give an alternative geometric description of $\LGr_\mu(3,\BW)_\blam$.
Consider the relative Grassmannian
\begin{equation*}
\P_{\LGr_\mu(3,\BW)_\blam}(\bw2\bcU_3) \cong \Gr_{\LGr_\mu(3,\BW)_\blam}(2,\bcU_3) \cong \LFl_\mu(2,3;\BW) \times_{\LGr_\mu(3,\BW)} \LGr_\mu(3,\BW)_\blam.
\end{equation*}
Being embedded into the symplectic flag variety, it comes with the tautological flag of vector bundles $\bcU_2 \hookrightarrow \bcU_3 \hookrightarrow \BW \otimes \cO$.
We denote by
\begin{equation}\label{diagram:gr-lgr}
\vcenter{\xymatrix{
 & \Gr_{\LGr_\mu(3,\BW)_\blam}(2,\bcU_3) \ar[dl]_{\rho_2} \ar[dr]^{\rho_3} \\
\LGr_\mu(2,\BW) && \LGr_\mu(3,\BW)_\blam 
}}
\end{equation}
the natural projections, induced by the projections of the symplectic flag variety. 

\begin{theorem}\label{theorem:lgrh}
The map $\rho_3$ is a $\P^2$-bundle. 
If the hyperplane section $\LGr_\mu(3,\BW)_\blam$ of $\LGr_\mu(3,\BW)$ is smooth, then
the map $\rho_2:\Gr_{\LGr_\mu(3,\BW)_\blam}(2,\bcU_3) \to \LGr_\mu(2,\BW)$ is the blowup with center in the subvariety $D_{\blam.\mu} \cong \Gr_\xi(2,W)$.
%
%
\end{theorem}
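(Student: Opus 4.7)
The $\P^2$-bundle claim for $\rho_3$ is immediate from the construction: since $\bw2\bcU_3$ has rank $3$ on $\LGr_\mu(3,\BW)_\blam$, the relative Grassmannian $\Gr_{\LGr_\mu(3,\BW)_\blam}(2,\bcU_3) \cong \P_{\LGr_\mu(3,\BW)_\blam}(\bw2\bcU_3)$ is tautologically a $\P^2$-bundle.

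For the assertion about $\rho_2$, the plan is to apply the blowup Lemma~\ref{lemma:blowup} in close analogy with Proposition~\ref{proposition:blowups}. Set $S := \LGr_\mu(2,\BW)$ and write $\cE := \bcU_2^{\perp\mu}/\bcU_2$, a rank-$2$ bundle on $S$ coming from the symplectic reduction by $\mu$. I first observe that the forgetful morphism $\LFl_\mu(2,3;\BW) \to S$ is naturally isomorphic to $\P_S(\cE)$: over a Lagrangian $\BU_2$, a Lagrangian $\BU_3 \supset \BU_2$ is the same as a line $\BU_3/\BU_2$ in the symplectic quotient $\BU_2^{\perp\mu}/\BU_2$. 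Under this identification $Y := \Gr_{\LGr_\mu(3,\BW)_\blam}(2,\bcU_3)$ sits inside $\P_S(\cE)$ as the subscheme cut out by the extra condition $\blam\vert_{\bw3\BU_3}=0$.

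For a local basis $u_1,u_2$ of $\bcU_2$ the form $\blam(u_1,u_2,-)$ annihilates $\bcU_2$ and hence descends to a linear functional on $\cE$. Globalising, the composition $\bw2\bcU_2 \xrightarrow{\blam} \bcU_2^\perp \twoheadrightarrow \bcU_2^\perp/\bcU_2$ appearing in the definition of $D_{\blam,\mu}$ is, via the canonical isomorphism $\bcU_2^\perp/\bcU_2 \cong \cE^\vee$ induced by $\mu$, a morphism $\bw2\bcU_2 \to \cE^\vee$. Dualising and setting $\cF := (\bw2\bcU_2)^\vee$, I obtain a morphism of vector bundles $\varphi \colon \cE \to \cF$ on $S$, and $Y \subset \P_S(\cE)$ is precisely the zero locus of the induced section of $p^*\cF \otimes \cO(1)$. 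This places us in the setup of Lemma~\ref{lemma:blowup} with $\rank\cE = 2 = \rank\cF + 1$.

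It remains to verify the hypotheses of the blowup lemma. Higher degeneracy loci $D_k(\varphi)$ are empty for $k\ge 2$ since $\rank\cF = 1$; and $D_1(\varphi)$, the vanishing locus of $\varphi$, equals $D_{\blam,\mu}$ by construction. The one non-formal ingredient is the codimension estimate $\codim_S D_{\blam,\mu} \ge 2$, and this is where the smoothness hypothesis enters: by Lemma~\ref{lemma:xi-general} the associated $3$-form $\xi \in \bw3 W^\vee$ on $W = \k \oplus \BW$ is general, so Proposition~\ref{proposition:dlm-grxi} identifies $D_{\blam,\mu}$ with the smooth five-dimensional $\Gtwo$-adjoint variety $\Gr_\xi(2,W)$. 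Since $\dim S = 7$, the codimension is exactly $2$, Lemma~\ref{lemma:blowup} applies, and $\rho_2$ is the blowup of $S$ with centre in $D_{\blam,\mu}$. The main conceptual obstacle is precisely Proposition~\ref{proposition:dlm-grxi} (already proved); granted that input, the argument is entirely parallel to the proof of Proposition~\ref{proposition:blowups}.
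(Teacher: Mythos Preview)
Your proof is correct and follows essentially the same route as the paper's: identify $\LFl_\mu(2,3;\BW)$ with the projectivization of the (self-dual) rank-$2$ bundle $\bcU_2^\perp/\bcU_2$ over $\LGr_\mu(2,\BW)$, recognize the subscheme cut out by $\blam$ as the zero locus of the corresponding section, and apply Lemma~\ref{lemma:blowup} once Lemma~\ref{lemma:xi-general} and Proposition~\ref{proposition:dlm-grxi} give $\codim D_{\blam,\mu}=2$. The only cosmetic difference is that you dualise to present the data as a morphism $\varphi\colon\cE\to\cF$ with $\rank\cE=\rank\cF+1$, matching the hypotheses of Lemma~\ref{lemma:blowup} more explicitly, whereas the paper speaks directly of a section of $(\bcU_2^\perp/\bcU_2)(1)$; these are the same thing.
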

\begin{proof}
The first part is evident. For the second part, 
we use again the blowup Lemma~\ref{lemma:blowup}. 
We note that the symplectic flag variety $\LFl_\mu(2,3;\BW)$ can be represented as the projectivization of the (self-dual) vector bundle $\bcU_2^\perp/\bcU_2$ on $\LGr_\mu(2,\BW)$,
and that the subvariety $\Gr_{\LGr_\mu(3,\BW)_\blam}(2,\bcU_3) \subset \LFl_\mu(2,3;\BW)$ is the zero locus of the global section of the line bundle $\cO(\bh) \cong \rho_3^*\cO(1)$,
which is a hyperplane class for the projection $\LFl_\mu(2,3;\BW) \to \LGr_\mu(2,\BW)$, corresponding to the twisted bundle $(\bcU_2^\perp/\bcU_2)(1)$. 
Therefore, the 3-form $\blam$ gives a global section of this bundle, and it is easy to see that this section is the same as the section, defining the subvariety $D_{\blam,\mu} \subset \LGr_\mu(2,\BW)$.

If the hyperplane section $\LGr_\mu(3,\BW)_\blam$ is smooth, then by Lemma~\ref{lemma:xi-general} the 3-form $\xi$ defined by~\eqref{eq:xi-def} is general,
and hence by Proposition~\ref{proposition:dlm-grxi} the zero locus $D_{\blam,\mu}$ of $\blam$ on $\LGr_\mu(2,\BW)$ is isomorphic to the $\Gtwo$-adjoint variety $\Gr_\xi(2,W)$.
Its codimension in $\LGr_\mu(2,\BW)$ is $7-5 = 2$, hence Lemma~\ref{lemma:blowup} applies and shows that the map $\rho_2$ is the blowup with center in $\Gr_\xi(2,W)$.
%
\end{proof}

This Theorem has the following nice consequences.

\begin{corollary}\label{corollary:motive-lgr-blam}
The Chow motive of $\LGr_\mu(3,\BW)_\blam$ is of Lefschetz type:
\begin{equation*}
\Mot(\LGr_\mu(3,\BW)_\blam) = \one \oplus \Lef \oplus \Lef^2 \oplus \Lef^3 \oplus \Lef^4 \oplus \Lef^5.
\end{equation*}
\end{corollary}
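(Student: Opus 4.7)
The strategy is to extract $\Mot(\LGr_\mu(3,\BW)_\blam)$ by combining the two motivic decompositions that the diagram of Theorem~\ref{theorem:lgrh} forces on the variety $\Gr_{\LGr_\mu(3,\BW)_\blam}(2,\bcU_3)$. First I will apply the projective bundle formula to the $\P^2$-bundle $\rho_3$, which gives
$$\Mot(\Gr_{\LGr_\mu(3,\BW)_\blam}(2,\bcU_3)) = \Mot(\LGr_\mu(3,\BW)_\blam) \otimes (\one \oplus \Lef \oplus \Lef^2).$$
Theorem~\ref{theorem:lgrh} also identifies $\rho_2$ as the codimension-$2$ blowup of $\LGr_\mu(2,\BW)$ along $D_{\blam,\mu} \cong \Gr_\xi(2,W)$. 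The blowup formula then yields the second description
$$\Mot(\Gr_{\LGr_\mu(3,\BW)_\blam}(2,\bcU_3)) = \Mot(\LGr_\mu(2,\BW)) \oplus \Mot(\Gr_\xi(2,W)) \otimes \Lef.$$

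Both $\LGr_\mu(2,\BW) \cong \Sp(6)/P_2$ and the $\Gtwo$-adjoint variety $\Gr_\xi(2,W)$ are rational homogeneous varieties admitting Bruhat cell decompositions, so their integral Chow motives are automatically of Lefschetz type. Counting minimal-length coset representatives in the respective Weyl groups (or equivalently reading off the Poincar\'e polynomial of the Schubert basis) gives
\begin{align*}
\Mot(\LGr_\mu(2,\BW)) &= \one \oplus \Lef \oplus (\Lef^2)^{\oplus 2} \oplus (\Lef^3)^{\oplus 2} \oplus (\Lef^4)^{\oplus 2} \oplus (\Lef^5)^{\oplus 2} \oplus \Lef^6 \oplus \Lef^7,\\
\Mot(\Gr_\xi(2,W)) &= \one \oplus \Lef \oplus \Lef^2 \oplus \Lef^3 \oplus \Lef^4 \oplus \Lef^5,
\end{align*}
corresponding to $12$ Schubert cells in $\LGr(2,6)$ and $6$ in the $\Gtwo$-adjoint variety. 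Plugging these into the blowup identity makes $\Mot(\Gr_{\LGr_\mu(3,\BW)_\blam}(2,\bcU_3))$ equal to the Lefschetz-type motive $\one \oplus 2\Lef \oplus 3\Lef^2 \oplus 3\Lef^3 \oplus 3\Lef^4 \oplus 3\Lef^5 \oplus 2\Lef^6 \oplus \Lef^7$.

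To conclude, I will observe that the projective bundle formula realizes $\Mot(\LGr_\mu(3,\BW)_\blam)$ as a direct summand of this Lefschetz-type motive via $\rho_3^*$, hence it is itself of Lefschetz type (the same summand-closure principle is invoked in the proof of Theorem~\ref{theorem:motive}). Writing $\Mot(\LGr_\mu(3,\BW)_\blam) = \bigoplus_i a_i \Lef^i$, matching coefficients in the projective bundle identity produces the triangular system $a_i + a_{i-1} + a_{i-2} = c_i$ with $(c_0,\dots,c_7) = (1,2,3,3,3,3,2,1)$, whose unique nonnegative solution is $a_0 = \cdots = a_5 = 1$ and $a_i = 0$ otherwise, which is the claimed formula. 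The only non-mechanical part of the argument is carefully tallying the Schubert cells on $\Sp(6)/P_2$ and on $\Gtwo/P$ so that the two Lefschetz expansions above are correctly recorded; once that is in place, the summand-closure step and the coefficient matching are routine.
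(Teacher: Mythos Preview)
Your proof is correct and follows essentially the same route as the paper: both arguments apply the projective bundle formula along $\rho_3$ and the blowup formula along $\rho_2$ from Theorem~\ref{theorem:lgrh}, plug in the known Lefschetz-type motives of the homogeneous spaces $\LGr_\mu(2,\BW)$ and $\Gr_\xi(2,W)$, and invoke closure of Lefschetz-type motives under direct summands. The only cosmetic difference is that you spell out the triangular system for the coefficients explicitly, whereas the paper simply remarks that the explicit decomposition ``easily follows''.
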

\begin{proof}
Note that both $\LGr_\mu(2,\BW)$ and $\Gr_\xi(2,W)$ are homogeneous spaces (for the groups $\Sp(W)$ and $\Gtwo$ respectively), 
hence their Chow motives are of Lefschetz type:
\begin{align*}
\Mot(\LGr_\mu(2,\BW)) &= \one \oplus \Lef \oplus (\Lef^2)^{\oplus 2} \oplus (\Lef^3)^{\oplus 2} \oplus (\Lef^4)^{\oplus 2} \oplus (\Lef^5)^{\oplus 2} \oplus \Lef^6 \oplus \Lef^7,\\
\Mot(\Gr_\xi(2,W)) &= \one \oplus \Lef \oplus \Lef^2 \oplus \Lef^3 \oplus \Lef^4 \oplus \Lef^5.
\end{align*}
By the blowup formula, we have
\begin{equation*}
\Mot(\Gr_{\LGr_\mu(3,\BW)_\blam}(2,\bcU_3)) = \one \oplus (\Lef)^{\oplus 2} \oplus (\Lef^2)^{\oplus 3} \oplus (\Lef^3)^{\oplus 3} \oplus (\Lef^4)^{\oplus 3} \oplus (\Lef^5)^{\oplus 3} \oplus (\Lef^6)^{\oplus 2} \oplus \Lef^7.
\end{equation*}
On the other hand, by the projective bundle formula we have
\begin{equation*}
\Mot(\Gr_{\LGr_\mu(3,\BW)_\blam}(2,\bcU_3)) = \Mot(\LGr_\mu(3,\BW)_\blam) \otimes (\one \oplus \Lef \oplus \Lef^2).
\end{equation*}
In particular, $\Mot(\Gr_{\LGr_\mu(3,\BW)_\blam}(2,\bcU_3))$ is a direct summand of a motive of Lefschetz type, hence is itself a motive of Lefschetz type.
The explicit form of its decomposition easily follows.
\end{proof}

\begin{remark}
Of course, it follows from Corollary~\ref{corollary:motive-lgr-blam} that the Chow groups of $\LGr_\mu(3,\BW)_\blam$ are free abelian of rank 1:
\begin{equation*}
\CH^p(\LGr_\mu(3,\BW)_\blam) \cong \Z,
\qquad
0 \le p \le 5.
\end{equation*}
Moreover, one can find explicit generators of those groups --- these are the fundamental class, the class of a hyperplane section, $c_2(\bcU_3)$, $c_3(\bcU_3)$, the class of a line, and the class of a point.
An interesting feature of this example is that Chern classes of the full exceptional collection do not generate the Chow ring (the class of a line is not generated).
\end{remark}

Another immediate application of the geometric construction of Theorem~\ref{theorem:lgrh} is the following description of the Hilbert scheme of lines.

\begin{corollary}
The Hilbert scheme of lines on $\LGr_\mu(3,\BW)_\blam$ is isomorphic to $\Gr_\xi(2,W)$.
Moreover, the exceptional divisor of the blowup $\rho_2$ in~\eqref{diagram:gr-lgr} is the universal family of lines.
\end{corollary}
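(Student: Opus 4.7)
The plan is to parameterize lines on the ambient Lagrangian Grassmannian $\LGr_\mu(3,\BW)$, impose the hyperplane condition cut out by $\blam$, and then match the resulting parameter space with the exceptional divisor produced by Theorem~\ref{theorem:lgrh}.

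First I would recall that every line on $\Gr(3,\BW)$ has the form $\{\BU_3 : \BU'_2 \subset \BU_3 \subset \BU''_4\}$ for a fixed flag of dimensions $2$ and $4$. For this whole pencil to lie on $\LGr_\mu(3,\BW)$, the subspace $\BU'_2$ must be $\mu$-isotropic and $\BU''_4$ must be contained in its $\mu$-perp (which has dimension $4$), so $\BU''_4$ is determined by $\BU'_2$. Hence lines on $\LGr_\mu(3,\BW)$ are parameterized by $\LGr_\mu(2,\BW)$, with universal family the odd symplectic flag variety $\LFl_\mu(2,3;\BW) \cong \P_{\LGr_\mu(2,\BW)}(\bcU_2^\perp/\bcU_2)$ (here $\bcU_2^\perp$ is as in Section~\ref{section:hyperplane-sgr}). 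Such a line is contained in the hyperplane section $\LGr_\mu(3,\BW)_\blam$ iff $\blam(\bw3\BU_3)=0$ for every $\BU_3 = \BU'_2 + \k v$ with $v \in (\BU'_2)^{\perp\mu}$. Since the element $\blam \conv \bw2\BU'_2 \in \BW^\vee$ vanishes automatically on $\BU'_2$, the remaining condition is the vanishing of its image in $((\BU'_2)^{\perp\mu}/\BU'_2)^\vee$, which in bundle language is precisely the vanishing of the composition
\[
\bw2\bcU_2 \xrightarrow{\ \blam\ } \bcU_2^\perp \twoheadrightarrow \bcU_2^\perp/\bcU_2
\]
that defines $D_{\blam,\mu}$. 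Thus the Hilbert scheme of lines on $\LGr_\mu(3,\BW)_\blam$ is scheme-theoretically $D_{\blam,\mu}$, which Proposition~\ref{proposition:dlm-grxi} identifies with $\Gr_\xi(2,W)$.

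For the second assertion, the associated universal family of lines is $\P_{D_{\blam,\mu}}(\bcU_2^\perp/\bcU_2) \subset \LFl_\mu(2,3;\BW)$. On the other hand, $\Gr_{\LGr_\mu(3,\BW)_\blam}(2,\bcU_3) \subset \LFl_\mu(2,3;\BW)$ parameterizes pairs $(\BU'_2 \subset \BU_3)$ with $\BU_3 \in \LGr_\mu(3,\BW)_\blam$; for $\BU'_2 \notin D_{\blam,\mu}$ the $\P^1$-fiber of $\rho_2$ meets the hyperplane transversally in a single point, while for $\BU'_2 \in D_{\blam,\mu}$ the entire $\P^1$-fiber lies inside $\Gr_{\LGr_\mu(3,\BW)_\blam}(2,\bcU_3)$. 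Consequently $\rho_2^{-1}(D_{\blam,\mu}) = \P_{D_{\blam,\mu}}(\bcU_2^\perp/\bcU_2)$, and by Theorem~\ref{theorem:lgrh} this is exactly the exceptional divisor of the blowup $\rho_2$; the restriction of $\rho_3$ to each $\P^1$-fiber sends it isomorphically onto the corresponding line $\P(\BU_2'{}^{\perp\mu}/\BU'_2) \subset \LGr_\mu(3,\BW)_\blam$, which is precisely the universal family statement.

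The one subtlety worth flagging is the scheme-theoretic identification of the Hilbert scheme with $D_{\blam,\mu}$ (rather than just a bijection of sets). This is essentially the universal property of $D_{\blam,\mu}$ as the zero locus of the section of $(\bcU_2^\perp/\bcU_2)(1)$ induced by $\blam$: a flat family of lines on $\LGr_\mu(3,\BW)_\blam$ over a base $T$ yields a morphism $T \to \LGr_\mu(2,\BW)$ by sending a line to the intersection of the Lagrangians in its pencil, and the condition that the family lands in the hyperplane section pulls back to the vanishing of the defining section of $D_{\blam,\mu}$, producing the required factorization $T \to D_{\blam,\mu}$.
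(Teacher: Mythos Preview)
Your argument is correct and follows essentially the same route as the paper's: identify lines on $\LGr_\mu(3,\BW)$ with $\LGr_\mu(2,\BW)$ via the flag variety $\LFl_\mu(2,3;\BW)\cong\P_{\LGr_\mu(2,\BW)}(\bcU_2^\perp/\bcU_2)$, observe that the hyperplane condition cuts out exactly the zero locus $D_{\blam,\mu}$ of the induced section of $(\bcU_2^\perp/\bcU_2)(1)$, and apply Proposition~\ref{proposition:dlm-grxi}. You simply unpack more of the details the paper leaves as ``well known'', and you make the second assertion about the exceptional divisor more explicit than the paper does; both additions are fine and the scheme-theoretic remark at the end is an appropriate caution.
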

\begin{proof}
The Hilbert scheme of lines on the Lagrangian Grassmannian $\LGr_\mu(3,\BW)$ is well-known to be isomorphic to the isotropic Grassmannian $\LGr_\mu(2,\BW)$
with the universal family of lines provided by the isotropic flag variety
\begin{equation*}
\LFl_\mu(2,3;\BW) \cong \P_{\LGr_\mu(2,\BW)}(\bcU_2^\perp/\bcU_2).
\end{equation*}
It follows that the Hilbert scheme of lines on the hyperplane section $\LGr_\mu(3,\BW)_\blam$ given by a 3-form $\blam$ is the zero locus of the section of $(\bcU_2^\perp/\bcU_2)(1)$ given by $\blam$,
i.e.\ coincides with the subscheme $D_{\blam,\mu} \subset \LGr_\mu(2,\BW)$. 
So, Proposition~\ref{proposition:dlm-grxi} applies.
\end{proof}

\begin{remark}
One can modify the construction of this section to get a birational description of a hyperplane section $\LGr_\mu(3,\BW)_\blam$ of the Lagrangian Grassmannian.
Consider a linear function $f \in \BW^\vee$ and the zero locus $M$ of $f$, considered
as a global section of $\bcU_2^\vee$ on $\Gr_{\LGr_\mu(3,\BW)_\blam}(2,\bcU_3)$. 
Then it is easy to see that the projection $\rho_3$ maps $M$ to $\LGr_\mu(3,\BW)_\blam$ birationally (and in fact identifies $M$ with the blowup of $\LGr_\mu(3,\BW)_\blam$ with center in a quadric surface),
and the projection $\rho_2$ maps $M$ birationally onto a hyperplane section $\Gr_\mu(2,5)$ of $\Gr(2,5)$ (and in fact identifies $M$ with the blowup of $\Gr_\mu(2,5)$ with center in the flag variety $\Fl(1,2;3)$).
This construction gives an alternative way to describe the motive of $\LGr_\mu(3,\BW)_\blam$ from the relation
$\Mot(\LGr_\mu(3,\BW)_\blam) \oplus \Mot(Q^2) \otimes (\Lef \oplus \Lef^2) = \Mot(\Gr_\mu(2,5)) \oplus \Mot(\Fl(1,2;3)) \otimes \Lef$.
\end{remark}


\providecommand{\bysame}{\leavevmode\hbox to3em{\hrulefill}\thinspace}
\providecommand{\MR}{\relax\ifhmode\unskip\space\fi MR }
\providecommand{\MRhref}[2]{%
  \href{http://www.ams.org/mathscinet-getitem?mr=#1}{#2}
}
\providecommand{\href}[2]{#2}

\end{document}